\documentclass[reqno,10pt]{amsart}
\usepackage{amsmath}
\usepackage{amssymb}
\usepackage{amsfonts,dsfont}
\usepackage{comment}
\usepackage{graphicx}
\usepackage[abs]{overpic}
\usepackage{caption}
\usepackage{float}
\usepackage{physics}
\usepackage{esint} 
\usepackage{braket} 
\usepackage{tikz}

\usepackage{xcolor}
\definecolor{blue_links}{RGB}{13,0,180} 

\usepackage{hyperref}
\hypersetup{
	colorlinks=true, 
	linktoc=all,     
	linkcolor=blue_links,  
	citecolor=blue_links,
	urlcolor=blue_links,
}

\newcommand{\der}[1]{{\rm{d}}#1}
\newcommand{\derx}{\der{x}}
\newcommand{\ders}{\der{s}}

\newcommand{\hn}{_{h_n}}
\newcommand{\hen}[1]{U^{\mathcal{T}}_{n}(\{#1\neq 0\})}

\newcommand{\en}{_{\varepsilon_n}}

\newcommand{\intnormal}[1]{\int_\Omega|\nabla #1|^2\,\derx}

\newcommand{\bruchdiscn}{\Gamma^{m}_\varepsilon}

\newcommand{\bruchdisc}{\Gamma_\varepsilon}
\newcommand{\bruchdische}[1]{U^{\mathcal{T}}_\varepsilon(\Gamma_\varepsilon(#1))}
\newcommand{\bruchdiscepsilonn}{\Gamma_{n}}
\newcommand{\bruchdiscepsilonnhe}[1]{U^{\mathcal{T}}_n(\Gamma_{n}(#1))}
\newcommand{\bruchmengedische}[1]{\frac{1}{2\varepsilon}\L^d\big(\bruchdische{#1}\big)}
\newcommand{\bruchmengedischen}[1]{\frac{1}{2\varepsilon_n}\L^d\big(\bruchdiscepsilonnhe{#1}\big)}
\newcommand{\truncation}[1]{\mathds{1}_{(Q_n(#1))^c}}

\newcommand{\energymain}[1]{\E_\varepsilon(u_\varepsilon(#1), \gamma_\varepsilon(#1), \Gamma_\varepsilon(#1))}

\newcommand{\energieeps}[1]{\intnormal{u_\varepsilon(#1)-\gamma_\varepsilon(#1)}+\bruchmengedische{#1}}

\renewcommand{\L}{\mathcal{L}}
\renewcommand{\H}{\mathcal{H}}

\newcommand{\T}{\mathcal{T}}

\newcommand{\E}{\mathcal{E}}
\newcommand{\Uhnen}{U^{\mathcal{T}}_n}

\usepackage{enumitem}		

\usepackage[backend=biber, sorting = nyt]{biblatex}		

\renewbibmacro{in:}{%
	\ifentrytype{article}{}{\printtext{\bibstring{in}\intitlepunct}}}
\bibliography{references}

\AtEveryBibitem{%
	\clearfield{issn} 
	\clearfield{doi} 
    \clearfield{isbn}
    \clearfield{month}
	
	\ifentrytype{online}{}{
		\clearfield{url}
	}
}

\usepackage[capitalise,noabbrev]{cleveref}	

 \usepackage[final]{showlabels}							

\usepackage{tikz}
\usetikzlibrary{decorations.pathreplacing,calc}
\def\centerarc[#1](#2)(#3:#4:#5)
{ \draw[#1] ($(#2)+({#5*cos(#3)},{#5*sin(#3)})$) arc (#3:#4:#5); }

\usepackage{mathtools}

\newtheorem{theorem}{Theorem}[section]
\newtheorem{lemma}[theorem]{Lemma}

\newtheorem{corollary}[theorem]{Corollary}
\newtheorem{definition}[theorem]{Definition}

\newtheorem*{theorem*}{Theorem}

\newcommand{\N}{\mathbb{N}}
\newcommand{\Z}{\mathbb{Z}}
\newcommand{\R}{\mathbb{R}}

\newcommand{\EEE}{\color{black}} 
 
\def\eps{\varepsilon}

\def\n{\mathbf{n}}

\def\XXint#1#2#3{{\setbox0=\hbox{$#1{#2#3}{\int}$}
\vcenter{\hbox{$#2#3$}}\kern-.5\wd0}}

\usepackage{color}

\numberwithin{equation}{section}

\DeclarePairedDelimiterX{\inner}[2]{\langle}{\rangle}{#1 \, , \, #2}
\DeclarePairedDelimiterX{\seminorm}[1]{[}{]}{#1}

\begin{document} 

\title[Eigenfracture approximation of quasi-static crack growth]{Eigenfracture approximation \\ of quasi-static crack growth in brittle materials}

\author[B.~Duong]{Ba Duc Duong}
\address[Ba Duc Duong]{
  Department of Mathematics \\
  Friedrich-Alexander Universit\"at Erlangen-N\"urnberg \\
  Cauerstr.~11, D-91058 Erlangen, Germany
}
\email{ba.duc.duong@fau.de}

\author[M. Friedrich]{Manuel Friedrich} 
\address[Manuel Friedrich]{Department of Mathematics, Johannes Kepler Universit\"at Linz. Altenbergerstrasse 69, 4040 Linz,
    Austria}
\email{manuel.friedrich@jku.at}

\subjclass[2020]{49J45, 70G75,   74B10,  74G65, 74R10.   }
\keywords{ Brittle materials, variational fracture, eigenfracture, free discontinuity problem, quasi-static fracture evolution. }
 
  \begin{abstract}   
We study an approximation scheme for a variational theory of quasi-static crack growth based on an eigendeformation approach. Following \cite{SchmidtFraternaliOrtiz2009}, we consider a family of energy functionals depending on a small parameter $\varepsilon$ and on two fields, the displacement field and an eigendeformation field that approximates the crack in the material. By imposing a suitable irreversibility condition and adopting an incremental minimization scheme, we define a notion of quasi-static evolution for this model. We then show that, as $\varepsilon \to 0$, these evolutions converge to a quasi-static crack evolution for the Griffith energy of brittle fracture \cite{FrancfortLarsen2003}, characterized by irreversibility, global stability, and an energy balance.
\end{abstract}

\maketitle

 

\section{Introduction}

    The last two decades have seen tremendous progress in the understanding of free-discontinuity problems and their application to fracture mechanics. In their  seminal work \cite{FrancfortMarigo1998}, \EEE {\sc Francfort} and {\sc Marigo}   proposed an evolutionary model based on the global minimization of so-called  {Griffith energies}, which feature a competition of bulk elastic contributions and an energy needed to increase the area of the cracked surface. Their model, formulated in the framework of rate-independent processes, is based on three fundamental principles: a no-healing irreversibility constraint on the crack, static equilibrium at every time, and an energy balance that ensures that the process is non-dissipative. The derivation of existence results for this model was initiated in \cite{dMasoToader2002_approximation_results} for a $2d$-antiplane model with restrictive assumptions on the topology of the crack set. The theory has then been further developed by removing such topological restrictions and adopting a weak formulation in the functional framework of $SBV$-functions \cite{AmbrosioFuscoPallara2000} or generalizations thereof. Among the vast body of literature, we only mention results in the antiplane setting \cite{FrancfortLarsen2003}, nonlinear elasticity \cite{dMasoFrancfortToader2005, dMasoGiacominiPonsiglione2009, dMasoLazzaroni2010}, and $2d$-linearized elasticity \cite{FriedrichSolombrino2018}, and we refer the reader to \cite{BourdinFrancfortMarigo2008, Francfort2022}  for an overview.

The goal of the present paper is to derive an approximation result of the above-mentioned crack  evolution  based on an eigendeformation approach to fracture \cite{SchmidtFraternaliOrtiz2009}. Several methods of approximating free-discontinuity problems with numerically more tractable models have been proposed  over the years, see \cite{Braides1998}, and their relation to Griffith energies has been analyzed by means of $\Gamma$-convergence \cite{dMaso1993}. One of the most popular computational methods are phase-field approximations \cite{AmbrosioTortorelli1990, Focardi2001, ChambolleCrismale2019}, where the sharp discontinuity is smoothed into a diffuse crack in terms of an auxiliary phase-field variable.   Without being exhaustive, we further mention some rigorous approximation results  on  \EEE finite-difference or finite-element schemes \cite{BachBraidesZeppieri2020, BelletiniCoscia1994, CrismaleScillaSolombrino2020}, nonlocal approximations \cite{MarzianiSolombrino2024, Negri2006, ScillaSolombrino2021, LussardiNegri2007, BraidesdMaso1997, CortesaniToader1999}, 
problems. \EEE or discrete finite-element approximations that make use of adaptive mesh refinements \cite{ChambolledMaso1999, BourdinChambolle2000, Negri2003, BabadjianBonhomme2023}.

\EEE

Another kind of approximation was proposed by {\sc Schmidt, Fraternali, and Ortiz}  \cite{SchmidtFraternaliOrtiz2009}, namely  an eigendeformation approach to fracture which, similar to phase-field methods, is a two-field approximation scheme. Given a small parameter $\eps > 0$, they consider an energy functional of the form
\begin{align}\label{E1}
E_\eps(u,\gamma) = \int_\Omega |\nabla u - \gamma|^2 \, {\rm d}x + \frac{\kappa}{2\varepsilon}\L^d(U_\varepsilon(\{\EEE \gamma\neq 0\}\EEE)),
\end{align}
where $\Omega \subset \R^d$ is the reference domain, $u \colon \Omega \to \R$ represents the displacement field, and $\gamma \colon \Omega \to \R^d$ an eigendeformation field approximating the crack. The eigendeformation allows the displacement field  $u$ \EEE to develop jumps at no cost in the \EEE  elastic energy, which comes at the expense of a certain amount of fracture energy measured in terms of the $\varepsilon$-neighborhood $U_\varepsilon(\cdot)$ of the support of $\gamma$. In \cite{SchmidtFraternaliOrtiz2009} it is shown that the $\Gamma$-limit as $\eps \to 0$ is given by the Griffith energy in the antiplane shear setting, namely
\begin{align}\label{E2}
E(u) = \int_\Omega |\nabla u|^2 \, {\rm d}x + \kappa\mathcal{H}^{d-1}(J_u)
\end{align}
for $u \in SBV^2(\Omega)$, where $J_u$ denotes the jump set of the function. (Strictly speaking, \cite{SchmidtFraternaliOrtiz2009} treats the vectorial case of linearized elasticity; we present a simpler case by restricting to scalar-valued displacements.)

The model has also been implemented for a time-evolution scheme in \cite{SchmidtFraternaliOrtiz2009}, and was later further considered and developed in,  e.g., \cite{VolkmannBeckSchmidt2022, PandolfiOrtiz2012, PandolfiLiOrtiz2014, StochinoQinamiKaliske2017, QinamiPandolfiKaliske2020, PandolfiWeinbergOrtiz2021}. Yet, we highlight that so far no rigorous connection between the approximation \eqref{E1} and the Griffith model \eqref{E2} for the quasi-static evolutionary framework proposed in \cite{FrancfortMarigo1998} is available. 



 \EEE

Indeed, the literature on static approximation results for the Griffith functional has largely overshadowed the analysis of evolutionary counterparts, and rigorous results in this direction are relatively scarce. We mention the phase-field approximation of quasi-static fracture evolution by {\sc Giacomini} \cite{Giacomini2005}, discontinuous finite-element approximations \cite{GiacominiPonsiglione2003, GiacominiPonsiglione2006}, as well as a discrete-to-continuous passage for an adaptive finite-element model \cite{CrismaleFriedrichSeutter2025}. We also recall related works where crack evolutions have been identified as effective variational limits of sequences of problems, such as atomistic models \cite{FriedrichSeutter2025}, homogenization \cite{GiacominiPonsiglione2006_Gamma_convergence_approach}, linearization \cite{FriedrichSteinkeStinson2025}, or a cohesive-to-brittle passage \cite{Giacomini2005_Size_effects}. The goal of the present paper is to show that such a rigorous approximation result    can also be obtained for the  eigenfracture approach.

Let us now describe our results in more detail. We follow the usual approach of   a \EEE time-discretized incremental minimization scheme with time-step size $\delta>0$. An important aspect is the implementation of a suitable irreversibility condition for the energy \eqref{E1}, \EEE which gives rise to a notion of an increasing crack set on the time-discrete level. In the present setting, this is achieved by requiring that the support of the eigendeformation field increases in time. Passing to the time-continuous limit $\delta \to 0$, we then show that there exists a quasi-static eigenfracture evolution for the energy \eqref{E1} for fixed $\eps > 0$, see Theorem~\ref{theorem:approx}. (Strictly speaking, due to compactness issues for $\gamma$ in \eqref{E1}, we additionally consider a finite-element discretization, see also Theorem \ref{theorem:griffithminNondiscrete} for a variant without such regularization.) Afterwards, we pass to the limit $\eps \to 0$ and show that the quasi-static eigenfracture evolutions converge to an irreversible quasi-static crack evolution in the sense of \cite{FrancfortMarigo1998}, see Theorem \ref{theorem:griffithmin}. More precisely, we obtain a pair $(u(t), \Gamma(t))$ of displacement fields and crack sets for $t \in [0,1]$ satisfying (a) an irreversibility condition, (b) global stability at all times (sometimes referred to as unilateral minimality), and (c) an energy balance law, see Definition \ref{main def-lim} for details. Moreover, we show convergence of the energies from \eqref{E1} to \eqref{E2} along the evolution at all times.

As a byproduct, closer to the numerical implementation in \cite{SchmidtFraternaliOrtiz2009} we also show that the limiting evolution can be achieved from the time-discretized  scheme of \eqref{E1} by  a simultaneous limit $\delta\to 0$, $\eps \to 0$, see Theorem \ref{th: simlutaneous}. Let us highlight that in \cite{SchmidtFraternaliOrtiz2009} the model was analyzed in the vectorial setting, whereas we reduce to the antiplane framework for the sake of simplicity. The vectorial version, corresponding to the approximation of the evolution derived in \cite{FriedrichSolombrino2018}, is subject of a future work.

Our results are in spirit close to the important work by {\sc Giacomini} \cite{Giacomini2005} on the Ambrosio–Tortorelli approximation of quasi-static crack evolution in the antiplane setting. The purpose of this paper is not to discuss the relevance of the eigenfracture approach compared to phase-field methods but rather to show that the variational approach to fracture is flexible enough to deal with various kinds of approximation schemes. The novelty of our work lies in developing techniques that allow us to adapt the by-now classical strategy \cite{FrancfortLarsen2003} for proving existence of crack evolutions to the setting of eigenfracture.

The most delicate and original part of the work consists in proving that, in the limiting passage $\eps \to 0$, the static equilibrium property is preserved at all times. This calls for a suitable adaptation of the jump-transfer construction of \cite{FrancfortLarsen2003}, which consists in `transferring' the jump of any competitor to that of a minimizing sequence. A fundamental role in this context is played by the $BV$ coarea formula, which allows  to fill potential small holes in the jump set to obtain, roughly speaking, a jump set locally separating the domain into two parts. (We will refer to this construction as a \emph{separating extension} of the crack set.) In \cite{Giacomini2005} this construction is adapted to the phase-field setting by cleverly using the interplay of both variables, the displacement and the phase-field variable, to obtain a separating extension. This technique, however, is finely tailored to the Ambrosio–Tortorelli functional and it seems that it cannot be transferred neither to the \EEE two-field approximation \eqref{E1} nor to phase-field models in linear elasticity.

We follow a strategy which is closer to the original proof in \cite{FrancfortLarsen2003} by  using directly  the coarea formula to construct  separating extensions. The fundamental additional difficulty stems from the fact that the crack set in \eqref{E1} is not given by a surface, but rather by a \emph{neighborhood} of a surface, described in terms of $U_\varepsilon(\{\gamma\neq 0\})$. In other words, even if the additional separating surface has small $\mathcal{H}^{d-1}$-measure, it is generally not guaranteed that the corresponding neighborhood contributes only a small amount. In this sense, our construction is related to recent adaptations of the jump transfer in spatially discrete settings, in particular concerning the definition of discrete interpolations.  However, in contrast to the 2d-results in \cite{FriedrichSeutter2025, CrismaleFriedrichSeutter2025}, we address the problem in arbitrary space dimensions, which requires significantly more delicate arguments to control the size of the neighborhoods appearing in \eqref{E1}. In particular, one cannot use an arbitrary separating extension with small additional surface, as provided by the coarea formula. Instead, a minimal one is required, in connection with a  lower density bound for this extension, see Lemma \ref{lem:minimalseparator}, and suitable covering arguments.  We believe that the techniques developed in this paper may also be useful for addressing other relevant approximation schemes in the future, in particular for extending the   phase-field evolution \cite{Giacomini2005} to linear elasticity.

Let us highlight two further relevant differences compared to  \cite{Giacomini2005}. First, in  \cite[Theorem~3.2]{Giacomini2005} separate  convergence of elastic and surface energies is established only for \emph{almost all} times. This is due to an  a priori choice of a countable subset of times $I_\infty$ from which the evolution is extended to all times.  However, such procedures generally fail to capture the behavior at times at which the surface energy exhibits discontinuities.    We instead implement a variant of this approach that allows us to prove convergence of both the elastic and the surface energy \EEE \emph{at all times}. In this sense, our evolution is related to those obtained by $\sigma^p$-convergence   \cite{dMasoFrancfortToader2005} or $\sigma$-convergence \cite{GiacominiPonsiglione2006_Gamma_convergence_approach}.  Secondly, in  \cite[Theorem 3.2]{Giacomini2005} only convergence of strains, but not of the displacements  is proved, which is related to a compactness issue. Indeed, on components that are completely disconnected by the crack set the behavior of displacements cannot be controlled. We adapt methods developed in \cite{FriedrichSteinkeStinson2025} to obtain convergence of displacements on those components of the domain which are connected to the Dirichlet boundary.

Our paper is organized as follows.  In Section \ref{sec: main results} we introduce the model and present our results. In addition to our two main results (Theorems \ref{theorem:approx} and \ref{theorem:griffithmin}), we also discuss some possible variants in Subsection \ref{sec:further}.   Section \ref{section:ExistenceForFixedEpsilon} is devoted to the existence of  a quasi-static evolution for the eigenfracture energy \eqref{E1} for fixed $\eps$. In Section \ref{chapter:Limitepsilontozero} we then perform the limit $\eps \to 0$. The most delicate part of the proof, namely the stability of unilateral minimizers, is deferred to Section \ref{section:stabilityresult}. Finally, in Section \ref{section:AlternativeSettings} we sketch the proofs for the variants of our main results. 
Since many steps of the proof follow a by-now classical strategy, not all arguments are presented in full detail. For the reader’s convenience, additional details on selected proof steps are collected in Appendix~\ref{sec:appendix}.




   \EEE

\section{The setting and main results}\label{sec: main results}

This section is devoted to the presentation of the model and the main results.

\subsection{The eigenfracture model}

 We let $ \Omega  \subset \mathbb{ R }^{ d } $ be a bounded Lipschitz domain.  We assume that $(\mathcal{T}_h)_{h > 0}$ is a family of triangulations of $\R^d$ such that $T$ is closed for all $T \in \mathcal{T}_h$ and \EEE
\begin{align}
\label{eq:bound_triangulation}
\sup \lbrace {\rm diam} \, T \colon \, T \in \mathcal{T}_h \rbrace \le h.
\end{align}
By $V_h(\Omega)$ and $W_h(\Omega)$ we  denote the corresponding finite element spaces consisting of continuous piecewise affine and of piecewise constant functions, respectively, defined on all simplices \EEE of $\mathcal{T}_h$ intersecting $\Omega$. Let $\eps>0$ be a small parameter representing the size of the nonlocality of the eigenstrain field. Given \EEE sets  $A\subset\Omega$, we define $U_\eps(A)$ as the closed \EEE $\eps$-neighborhood of $A$ and $U_{\eps,h}^{\mathcal{T}}(A)$ \EEE as the union of \EEE all simplices  intersecting $U_\eps(A)$. We consider the  energy \EEE
\begin{align}    \label{eq:energydefinitionepsilon}
E_{\varepsilon,h}(u, \gamma):= \int_\Omega Q\big(\nabla u-\gamma\big) \,\derx +\frac{\kappa}{2\varepsilon}\L^d(U_{\eps,h}^{\mathcal{T}} \EEE ( \{\gamma\neq 0\})),
\end{align}
where $u \in W^{1,1}(\Omega)$ denotes the elastic displacement and $\gamma \in L^1(\Omega;\R^d)$ denotes the eigendeformation field. The first part of the energy corresponds to the elastic part of the energy, with $Q \colon \R^d \to \R$ being a quadratic form $Q(F) = F^T  \mathbb{C} F$ for all $F \in \R^d$ and some $\mathbb{C} \in \R^{d \times d}$ such that $Q(F)   > \EEE 0$ for all $F \neq 0$. The second part represents the fracture energy, where $\kappa>0$ denotes the fracture toughness and $\lbrace \gamma \neq 0 \rbrace $  is the complement of the zero-set of the precise representative of $\gamma$. The functional \eqref{eq:energydefinitionepsilon} is a simplification of the model analyzed in  \cite{SchmidtFraternaliOrtiz2009}, by restricting to scalar-valued displacements. \EEE

In \cite[Theorem 5.1]{SchmidtFraternaliOrtiz2009} it is shown that, under the assumption $h=h(\eps)$ and 
\begin{align}\label{heps}
h(\eps)/\eps \to 0 \quad \text{ as } \eps \to 0,
\end{align}
the $\Gamma$-limit of $E_{\eps,h}$ as $\eps \to 0$ (with respect to the $L^1$-topology for $u$) \EEE  is given by the Griffith energy in the antiplane setting, namely 
\begin{align}\label{limitgama}
E(u) = \int_\Omega Q(\nabla u) \, {\rm d}x + \kappa \mathcal{H}^{d-1}(J_u) 
\end{align}
for $u \in SBV^2(\Omega)$, where $J_u$ \EEE is the jump set of $u$. As a byproduct, limits of the eigendeformation field $\gamma$ can be identified with the singular part $D^s u$   of the distributional derivative. 
In the sequel, we always assume that $h$ depends on $\eps$ and that  \eqref{heps} holds. We often drop the explicit dependence on $h$ in the notation, e.g., we write $E_\eps$ in place of $E_{\eps,h}$ or  $U^{\mathcal{T}}_\varepsilon$ in place of $U^{\mathcal{T}}_{\varepsilon,h}$. \EEE

%

We also mention that the $\Gamma$-convergence result can be complemented with boundary conditions. As is typically done for fracture problems, we impose displacement boundary conditions by considering a larger set $\Omega '$ containing $\Omega$ such that $\Omega'\setminus \overline{\Omega}$ is   also \EEE
a Lipschitz  set, and $\partial_D \Omega := \partial \Omega \cap \Omega'$ denotes the Dirichlet boundary.   Given $g\in W^{2,\infty}( \R^d)$, we denote by $g_h$ the continuous, piecewise affine interpolation of $g$ on  the triangulation $\mathcal{T}_h$.  Then, requiring $u = g_h,  \gamma = 0$ on $\Omega' \setminus \overline{\Omega}$ for admissible functions $u \in  V_h (\Omega'), \gamma\in W_h(\Omega')$ in  \eqref{eq:energydefinitionepsilon}, the corresponding $\Gamma$-limit is again of the form \eqref{limitgama}, but only finite if $u \in SBV^2(\Omega')$ satisfies $u = g$ on $\Omega' \setminus \overline{\Omega}$.  Here, we highlight that the elastic energy is still defined on $\Omega$ although the functions are defined on the larger set $\Omega'$. Moreover, for functions $u \in SBV^2(\Omega')$ satisfying $u = g$ on $\Omega' \setminus \overline{\Omega}$, the jump set satisfies $J_u \subset \Omega \cup \partial_D \Omega$, i.e., jump along the Dirichlet boundary is in principle possible and accounted for in the surface part of \eqref{limitgama}.

We mention that in this paper we consider a  finite-element discretization  of the eigendeformation approximation although the theoretical $\Gamma$-convergence results in \cite{SchmidtFraternaliOrtiz2009} primarily deal with a version of the functional without additional spatial discretization. The reason for this is the  missing compactness for the functional \eqref{eq:energydefinitionepsilon}. For more details we refer to Subsection \ref{sec:further} below, where we also suggest a variant of the approach which allows us \EEE to drop the finite-element discretization. \EEE




\subsection{Quasi-static evolution of the eigenfracture approximation}\label{sec: main1}
 
In the spirit of \cite{FrancfortMarigo1998, FrancfortLarsen2003}, we start by introducing \EEE a time-discrete evolution which is driven by time-dependent boundary conditions $g\in W^{1,1}( [0,1]; \EEE   W^{2,\infty}   ( \R^d  ))$.  Let $I_\infty$ be a dense subset of $[0, 1]$ with $0\in I_\infty$ as well as $I_m=\{0=t^m_0<t^m_1<\dots t^m_m\}\subset I_\infty$ be a finite subset of $I_\infty$ such that $\lim_{m \to \infty} \max_{i=1}^m (t^m_i-t_{i-1}^m) = 0$. We additionally assume that the sets are nested, i.e.,  $I_{m}\subset I_{m+1}$, and that  $\bigcup_{m}I_m=I_\infty$. For convenience, we introduce the shorthand  notation \EEE
\begin{equation*}
	I_{ m }^{ t }:= \left\{ \tau \in I_{m} \colon \ \tau \leq t \right\}.
\end{equation*} 
 We  set up a quasi-static evolutionary problem for fixed $\eps$, $h$, and $m$, similar to the one addressed in   \cite[Section 5.2]{SchmidtFraternaliOrtiz2009}. 

Recalling that \EEE  $g_h$ denotes the interpolation of $g$ with respect to $\mathcal{T}_h$, we define $g^{m}_{h, k} \EEE := g_h(t^m_k)$  as a shorthand notation\EEE.  We suppose that the \emph{initial pair} $(u_0^m, \gamma_0^m)$ is a solution of the minimization problem
\begin{align}
     \label{eq:mineps0}
    \min\big\{E_{\eps}(u, \gamma) \colon (u, \gamma)\in V_h(\Omega')\times W_h(\Omega'), \,  u=g_{h, 0}^m, \,  \gamma = 0 \text{ on } \Omega' \setminus \overline{\Omega}\big\} ,
   \end{align}
where $E_{\eps}$ denotes the energy given in \eqref{eq:energydefinitionepsilon}. Clearly, this solution can be chosen independently of $m$. \EEE
The subsequent steps are obtained iteratively by taking the `crack set' of the previous time steps into account. To this end, we introduce the energy
\begin{align}
    \label{eq:energydefinitionepsilon-new}
    \mathcal{E}_\varepsilon\big(u, \gamma, \Psi \big):= \int_\Omega Q\big(\nabla u-\gamma\big)\,\derx +\frac{\kappa \EEE }{2\varepsilon}\L^d\big(U^{\mathcal{T}}_\varepsilon(\Psi \cup\{\gamma\neq 0\})\big)
\end{align}
for $u\in V_h(\Omega')$ and $\gamma\in W_h(\Omega')$, where $\Psi\subset  \Omega'  $ represents the already existing support of the eigenstrain field from previous time steps. Then, given the pairs  $(u^m_j, \gamma^m_j)_{0 \le j \le i}$ and defining 
the notation 
\begin{align}\label{gammam}
\Gamma^m_\varepsilon(t):= \bigcup_{k\in \{0, \dots m\}: \,  t^m_k\in I^t_m} \EEE \{\gamma^m_k\neq 0\} \quad \text{ for all } t \in [0,1],
\end{align}
we choose the pair $(u_{i+1}^m, \gamma_{i+1}^m)$ in the next time step as minimizer of the problem 
\begin{align}
        \label{eq:minepsi+1}
    \min \big\{ \E_\varepsilon\big(u, \gamma, \Gamma^m_\varepsilon(t^m_i)\big) \colon \, (u, \gamma)\in V_h(\Omega')\times W_h(\Omega'), \, u=g_{h,i+1}^m, \,  \gamma = 0 \text{ on }  \Omega' \setminus \overline{\Omega}   \big\}.
\end{align}
Proof of  \EEE existence of minimizers for \eqref{eq:mineps0} and \eqref{eq:minepsi+1} is rather straightforward as we briefly elaborate at the beginning of Section \ref{section:ExistenceForFixedEpsilon}.  Denoting the characteristic function of $\Gamma^m_\varepsilon(t)$ by $\mathds{1}_{\Gamma^m_\varepsilon(t)}$, \EEE given the pairs  $(u^m_k, \gamma^m_k)$ for $0 \le k \le i$,  minimality implies \EEE  the identity 
\begin{align}
    \label{eq:identityyni}
    \gamma^m_i=\nabla u^m_i \mathds{1}_{\Gamma^m_\varepsilon(t_i^m)} , 
\end{align}
since by \EEE this choice of $\gamma^m_i$ the elastic energy is clearly minimized. \EEE As $m \to \infty$, the time-discrete solutions $(u^m_k)_{0 \le k \le m}$ converge to a quasi-static evolution for the eigenfracture approximation. To formulate the statement, we define admissible triples: we say that $(\bar{u},\bar{\gamma},\bar{\Gamma}) \in AD_\eps(\bar{g})$ if  $(\bar{u}, \bar{\gamma}) \in V_h(\Omega')\times W_h(\Omega') $,  $\bar{\Gamma}$  is  a  union of simplices of $\mathcal{T}_h$, and 
\begin{equation*}
  \bar{u}=\bar{g} \text{ on } \Omega' \setminus \overline{\Omega}, \quad   \bar{\gamma}=0 \text{ on } \Omega' \setminus \overline{\Omega}, \quad  \text{and} \quad  \lbrace \bar{\gamma} \neq  0  \rbrace \subset \bar{\Gamma}.  
  \end{equation*} 

\EEE

\begin{theorem}[Quasi-static eigenfracture evolution]
    \label{theorem:approx}
Let  $g\in W^{1, 1}([0, 1]; W^{2, \infty}(\R^d))$ and let $g_h\in W^{1, 1}([0, 1]; V_h(\Omega'))$ be its interpolation on $\mathcal{T}_h$. \EEE For all $\varepsilon>0$ there exists an irreversible quasi-static evolution for the eigenfracture approximation, i.e.,   a mapping $      t \mapsto (u_\varepsilon(t), \gamma_\varepsilon(t), \Gamma_\eps(t)) $ for $t \in [0,1]$   satisfying $ (u_\varepsilon(t), \gamma_\varepsilon(t), \Gamma_\eps(t)) \in AD_\eps(g_h(t))$ for all $t\in[0 ,1]$  \EEE such that \EEE   the following properties hold:  
    \begin{enumerate} 
    \item[\rm (a)] \emph{Initial condition:} $(u_\eps(0),\gamma_\eps(0),   \emptyset  )$ minimizes \eqref{eq:energydefinitionepsilon-new} among all   
$  (\bar{u}, \bar{\gamma}  ) \EEE \in V_h(\Omega')\times W_h(\Omega')$ with $\bar{u}=g_h(0)$ and  $\bar{\gamma}=0$ on $\Omega' \setminus \overline{\Omega}$.    
            \item[\rm (b)] \emph{Irreversibility:} We have $\Gamma_\varepsilon(s)\subset \Gamma_\varepsilon(t)$ \EEE for all $0\leq s\leq t\leq 1$. 
       
        \item[\rm (c)] \emph{Global stability:} For all $t\in (0, 1]$ and for all $(\bar{u}, \bar{\gamma})\in V_h(\Omega')\times W_h(\Omega')$ with  $\bar{u}=g_h(t)$ and $\bar{\gamma} = 0$ on $\Omega' \setminus \overline{\Omega}$ we have
        \begin{equation}
            \begin{aligned}
                \label{eq:3.3}
                \E_\varepsilon(u_\varepsilon(t), \gamma_\varepsilon(t), \Gamma_\varepsilon(t))\leq \E_\varepsilon(\bar{u}, \bar{\gamma}, \Gamma_\varepsilon(t)).
            \end{aligned}
        \end{equation}
       \item[\rm (d)] \emph{Energy balance:} The function $t\to \E_\varepsilon(u_\varepsilon(t), \gamma_\varepsilon(t), \Gamma_\varepsilon(t))$ is absolutely continuous and 
        \begin{align}
            \label{eq:3.4}
         \quad \quad   \E_\varepsilon(u_\varepsilon(t), \gamma_\varepsilon(t), \Gamma_\varepsilon(t))=\E_\varepsilon(u_\varepsilon(0), \gamma_\varepsilon(0), \emptyset) + 2 \int_0^t\int_\Omega(\nabla u_\varepsilon(s)-\gamma_\varepsilon(s))  \cdot \mathbb{C} \partial_t\nabla g_h(s)\,\derx\,\ders
        \end{align}
      for all $t\in [0, 1]$, where $\partial_t$ denotes the time derivative. 
    \end{enumerate}
 Moreover, this evolution is given as the limit of the time-discrete solutions $(u^m_k, \gamma^m_k)_{0 \le k \le m}$ for $m \in \N$ defined in \eqref{eq:mineps0} and \eqref{eq:minepsi+1} in the sense that, up to a subsequence (not relabeled),   it holds that, as $m \to \infty$,   
\begin{align}\label{energyconvergenceeps}
&      \E_\varepsilon(u^m_{k(m,t)}, \gamma^m_{k(m,t)}, \Gamma^m_{k(m,t)}) \to    \E_\varepsilon(u_\varepsilon(t), \gamma_\varepsilon(t), \Gamma_\varepsilon(t)),    \notag \\ 
& u^m_{k(m,t)} \to u_\eps(t) \ \text{ in } L^1(G_\eps(t)), \quad \quad   \nabla u^m_{k(m,t)}\mathds{1}_{(\Gamma^m_{k(m,t)})^c}\to \nabla u_\varepsilon(t)-\gamma_\varepsilon(t) \ \text{ in }  L^2(\Omega'),
\end{align}
 for all  $t \in [0,1]$, where $k(m,t)$ is chosen such that $t^m_{k(m,t)} \le  t  <   t^m_{k(m,t)+1}  $, \EEE and  $G_\eps(t) \subset \Omega'$   denotes the largest open set with $\Omega' \setminus \overline{\Omega} \subset G_\eps(t)$ and \EEE  $ U^\T_\varepsilon(\bruchdisc(t)) \EEE \cap G_\varepsilon(t) = \emptyset$.
 

\end{theorem}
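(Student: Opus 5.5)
We follow the classical Francfort–Larsen scheme, which is much simpler here because $h$ and $\eps$ are fixed: $V_h(\Omega')$ is locally finite-dimensional on the bounded set $\Omega'$, and every crack set is a union of simplices of $\mathcal{T}_h$ meeting $\Omega'$. There are only finitely many such unions.

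\textbf{Step 1: discrete a priori estimates.} Testing \eqref{eq:minepsi+1} with $(u^m_i - g^m_{h,i} + g^m_{h,i+1}, \gamma^m_i)$ is admissible, since $\{\gamma^m_i\neq0\}\subset \Gamma^m_\eps(t^m_i)$. Combining this with \eqref{eq:identityyni} and expanding the quadratic form gives the discrete energy inequality
\begin{align*}
\E_\eps(u^m_{i+1},\gamma^m_{i+1},\Gamma^m_\eps(t^m_{i+1})) \le \E_\eps(u^m_i,\gamma^m_i,\Gamma^m_\eps(t^m_i)) + 2\int_{t^m_i}^{t^m_{i+1}}\int_\Omega (\nabla u^m_i-\gamma^m_i)\cdot\mathbb{C}\partial_t\nabla g_h\,\derx\,\ders + R^m_i ,
\end{align*}
where $\sum_i R^m_i\to0$ because $\partial_t\nabla g_h\in L^1(0,1;L^\infty)$ and the time steps shrink to zero. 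A Gronwall argument then bounds the energies, and hence $\|\nabla u^m_k\mathds{1}_{(\Gamma^m_k)^c}\|_{L^2}$, uniformly in $m$ and $k$.

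\textbf{Step 2: compactness and construction of the limit.} The piecewise constant maps $t\mapsto\Gamma^m_\eps(t)$ are increasing and take values in a finite family of sets. A Helly-type selection, namely a diagonal argument over the countable set $I_\infty$ combined with finiteness of the target, gives a subsequence with $\Gamma^m_\eps(t)\to\Gamma_\eps(t)$ stationary for every $t\in[0,1]$. This limit is increasing, which yields (b). Because the sets are finite unions of simplices, for each $t$ eventually $\Gamma^m_{k(m,t)}=\Gamma_\eps(t)$.

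On $G_\eps(t)$ and on the other simplices outside $\Gamma_\eps(t)$, the functions $u^m_{k(m,t)}$ are bounded in a finite-dimensional space. On $\Omega'\setminus\overline\Omega$ they are pinned by $g_h$, and they propagate to $G_\eps(t)$ through connectedness along uncracked simplices. Hence, for fixed $t$, we may extract a further subsequence, depending on $t$, along which they converge. To make the construction independent of the subsequence, we define $u_\eps(t)$ as a minimizer of $\E_\eps(\cdot,\cdot,\Gamma_\eps(t))$ with boundary datum $g_h(t)$, and set $\gamma_\eps(t)=\nabla u_\eps(t)\mathds{1}_{\Gamma_\eps(t)}$. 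Strict convexity of $Q$ in $\nabla u$ off $\Gamma_\eps(t)$ makes $\nabla u_\eps(t)\mathds{1}_{\Gamma_\eps(t)^c}$ unique, and makes $u_\eps(t)$ unique on $G_\eps(t)$. Consequently the whole sequence converges as claimed in \eqref{energyconvergenceeps}, provided every limit point is such a minimizer.

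\textbf{Step 3: global stability (c).} Given a competitor $(\bar u,\bar\gamma)$ at time $t$, we test the discrete problem at step $k(m,t)$ with $(\bar u + g_h(t^m_k)-g_h(t),\bar\gamma)$. Since eventually $\Gamma^m_{k}=\Gamma_\eps(t)$, the crack term coincides exactly. The elastic term passes to the limit by strong convergence of the boundary data and lower semicontinuity, so no jump transfer is needed at fixed $h$. The same argument shows that limit points are minimizers, which gives the energy convergence in Step 2 and (a).

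\textbf{Step 4: energy balance (d).} Summing the discrete inequality and passing to the limit yields "$\le$" in \eqref{eq:3.4}, by dominated convergence of $\nabla u^m-\gamma^m$ at a.e. time. For "$\ge$" we use stability at time $s_j$ of a partition of $[s,t]$, tested with $u_\eps(s_{j+1})-g_h(s_{j+1})+g_h(s_j)$, together with a Riemann-sum argument for $s\mapsto\int(\nabla u_\eps-\gamma_\eps)\cdot\mathbb{C}\partial_t\nabla g_h$. This requires measurability and approximability by Riemann sums of that map, which holds since it is bounded and continuous except at the finitely many jump times of $\Gamma_\eps$. Absolute continuity of the energy then follows from the balance identity.

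The main obstacle is the bookkeeping needed to obtain convergence for all $t$ rather than almost every $t$, together with the $L^1(G_\eps(t))$ convergence of displacements. Finiteness of the crack configurations and uniqueness of the minimizer on $G_\eps(t)$ are the tools to handle both.
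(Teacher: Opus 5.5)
Your proposal is correct in substance, but it takes a genuinely different route from the paper.

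\textbf{The paper's route.} The paper follows the continuum Francfort--Larsen template:
\begin{itemize}
\item It applies Helly's theorem to the scalar surface energies $\lambda^m_\varepsilon(t)$ and extracts limits only on the countable set $I_{\infty,\varepsilon}=I_\infty\cup\mathcal{N}_\varepsilon$. Other times are reached by left limits.
\item It defines $\Gamma_\varepsilon(t)$ as the union of the supports of the limiting eigenstrains. It therefore has only the one-sided inclusion $\Gamma_\varepsilon(t)\subset\Gamma^m_\varepsilon(t)$ (Lemma~\ref{lem:stabilityjumpset}).
\item Strain convergence off $I_{\infty,\varepsilon}$ needs the comparison argument with the auxiliary minimizers $w_m(s,t)$ and continuity of $\lambda_\varepsilon$ (Lemma~\ref{lem:PointwiseConvergence}).
\item The reverse energy inequality uses the Dal Maso--Francfort--Toader partition lemma for the Riemann sums.
\item Only a posteriori, from energy convergence, does it obtain $U^{\mathcal{T}}_\varepsilon(\Gamma^m_\varepsilon(t))=U^{\mathcal{T}}_\varepsilon(\Gamma_\varepsilon(t))$ for large $m$.
\end{itemize}

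\textbf{Your route.} You exploit the finiteness of crack configurations from the outset:
\begin{itemize}
\item A subsequence along which $\Gamma^m_\varepsilon(t)$ is eventually constant at every $t$ makes the crack terms coincide exactly in the stability passage.
\item Strict convexity of the fixed-crack elastic problem identifies the limit of $\nabla u^m_{k(m,t)}\mathds{1}_{(\Gamma^m_\varepsilon(t))^c}$ for the whole subsequence at every $t$. No time-dependent extractions or left-limit extension are needed.
\item The map $t\mapsto\nabla u_\varepsilon(t)-\gamma_\varepsilon(t)$ is piecewise continuous: there are finitely many crack changes, and in between the solution depends linearly on $g_h(t)$. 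So elementary Riemann sums suffice.
\end{itemize}
Your argument is shorter and gives equality of the crack sets themselves rather than of their neighborhoods. The paper's argument is the template that Section~\ref{chapter:Limitepsilontozero} reuses almost verbatim, where no finiteness is available.

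\textbf{Points to tighten.}
\begin{itemize}
\item \emph{Definition of $u_\varepsilon(t)$.} Take it as a minimizer of $\int_{\Omega\setminus\Gamma_\varepsilon(t)}Q(\nabla u)\,\mathrm{d}x$, i.e.\ of $\mathcal{E}_\varepsilon(\cdot,\cdot,\Gamma_\varepsilon(t))$ restricted to $\{\gamma\neq0\}\subset\Gamma_\varepsilon(t)$, or simply as a limit point. An unrestricted minimizer of $\mathcal{E}_\varepsilon(\cdot,\cdot,\Gamma_\varepsilon(t))$ may extend the crack at equal total energy. Then $\nabla u\,\mathds{1}_{\Gamma_\varepsilon(t)^c}$ is not unique, and $(u,\nabla u\,\mathds{1}_{\Gamma_\varepsilon(t)})$ need not be stable. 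Your Step~3 shows the restricted minimizer is globally stable, so this is only a matter of wording.
\item \emph{Helly-type selection.} A diagonal extraction over $I_\infty$ alone gives stationarity only on $I_\infty$. You need further extractions at the finitely many jump points of the limit. Equivalently, pass to convergent activation times of the finitely many simplices, together with a fixed side of approach.
\item \emph{Compactness.} Boundedness of $u^m$ on pieces not connected to the Dirichlet region does not follow from your estimates. It is also not needed. The gradients $\nabla u^m\mathds{1}_{\Gamma^c}$ are bounded, and their limit points are again of the form $\nabla u^*\mathds{1}_{\Gamma^c}$ with $u^*$ admissible for $g_h(t)$. This holds because the image of the affine admissible class is closed in finite dimensions and depends continuously on the datum.
\end{itemize}
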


We mention that in general the convergence of displacements in \eqref{energyconvergenceeps} cannot be guaranteed outside of $G_\eps(t)$ as $\Omega' \setminus G_\eps(t)$ corresponds to  the parts of the domain which are `broken off' by the crack set, see  \cite[Subsection 2.4]{FriedrichSteinkeStinson2025} for details.  \EEE

\subsection{Passage to a quasi-static crack evolution for a Griffith model in the antiplane setting}\label{sec: main2}

Recall the Griffith-type energy introduced in \eqref{limitgama}. Given a rectifiable set $\Gamma \subset  \Omega \cup  \partial_D \Omega$, we define the energy   
  \begin{align}\label{eq: lim-en}
  \mathcal{E}(u,\Gamma ):=   \int_{\Omega} Q(\nabla u) \,{\rm d}x+  \kappa \mathcal{H}^{d-1}(\Gamma)\,,
  \end{align}
  for each $u \in SBV^2 (\Omega') \EEE $, where the corresponding jump set $J_u$ is   subject to the constraint $J_u \, \tilde{\subset} \,   \Gamma \EEE $. (Here and in the following, $\, \tilde{\subset} \, $ stands for inclusions up to $\mathcal{H}^{d-1}$-negligible sets.)  
By $AD(\bar{g},\bar{\Gamma})$ we denote all functions $\bar{u}\in SBV^2(\Omega') \EEE$ such that   
\begin{equation*}
  \bar{u}=\bar{g} \text{ on } \Omega' \setminus \overline{\Omega}, \quad J_{\bar{u}} \,  \tilde{\subset} \, \bar{\Gamma}.
  \end{equation*}

 \begin{definition}\label{main def-lim}
  We define an \emph{irreversible quasi-static crack evolution} with respect to the boundary condition $g  \in W^{1,1}([0,1];W^{2,\infty}(\R^d)) \EEE $  as any  mapping  $t\to (u(t),\Gamma(t))$ with $\Gamma(t) \subset \Omega \cup \partial_D \Omega$ rectifiable and   $u(t) \in AD(g(t),\Gamma(t))$ for all $t \in [0,1] \EEE $ such that the following  four \EEE conditions hold:
  
\begin{itemize}
   \item[\rm (a)] \emph{Initial condition}: $u(0)$ minimizes $\mathcal{E}(u,J_u)$ given in \eqref{eq: lim-en} among all $v \in  SBV^2\EEE (\Omega')$ with
$v = g(0)$ on $\Omega' \setminus \overline{\Omega}$. 
  \item[\rm (b)] \emph{Irreversibility}:   $\Gamma(t_1) \, \tilde{\subset} \,  \Gamma(t_2)$   for all $0\leq t_1\leq t_2\leq 1\EEE $.
  \item[\rm (c)] \emph{Global stability}:  
  For every $t \in (0,1] \EEE $,   for  every  $\Phi$ \EEE with $\Gamma(t) \, \tilde{\subset} \, \Phi$, and for   every $ v  \in AD(g(t),\Phi)$ it holds that 
      \begin{equation*} 
       \mathcal{E}(u(t),\Gamma(t)) \leq \mathcal{E}( v ,\Phi)\,.
      \end{equation*}
      \item[\rm (d)]   \emph{Energy balance}:    The function $t\mapsto \mathcal{E}(u(t),\Gamma(t))$ is absolutely continuous and it holds that
      \begin{equation}\label{energybalance}
          \frac{\rm d}{{\rm d}t} \, \mathcal{E}(u(t),\Gamma(t)) =  2 \EEE \int_{\Omega}  \nabla u  (t) \cdot \mathbb{C} \EEE \nabla \partial_{t}g(t)\, {\rm d}x  \quad \text{for a.e.\ $t \in [0,1] \EEE $}. \EEE 
      \end{equation}  
\end{itemize}
\end{definition}

The existence of such an evolution has been proven in the groundbreaking work by {\sc Francfort and Larsen} \cite{FrancfortLarsen2003}  by means of time-discrete approximations, analogous to \eqref{eq:minepsi+1}. Our second theorem constitutes the main result of the paper showing that the evolutionary eigenfracture approximation converges to an evolution in the sense of Definition \ref{main def-lim}. 

 As a final preparation for its formulation, we define sets on which convergence of displacement fields can be guaranteed. For a crack set $\Gamma(t)  \subset  \Omega   \cup \EEE \partial_D\Omega $ with $\mathcal{H}^1(\Gamma(t) \EEE)<\infty$, by $B(t) \subset \Omega$ we denote the largest set of finite perimeter (with respect to set inclusion) which satisfies $  \partial^* B(t)  \cap \Omega' \EEE \, \tilde{\subset} \,   \Gamma(t) \EEE  $. \EEE This set represents the `broken off pieces', and by $G(t) := \Omega' \setminus B(t)$ instead we denote the `good set', {which}   in particular satisfies $\Omega' \setminus \overline{\Omega} \subset G(t)$. Note that  convergence of the displacements can only be expected on $G(t)$, see  \cite[Subsection 2.4]{FriedrichSteinkeStinson2025} for details.

\begin{theorem}[Approximation of quasi-static crack growth]
    \label{theorem:griffithmin}
 For all $\varepsilon >0$ with associated $h =h(\eps)>0$, we let $t\to(u_{\varepsilon}(t), \gamma_{\varepsilon}(t), \Gamma_\eps(t)) \EEE $ be the eigenfracture evolutions with boundary data $g_h$ given by Theorem~\ref{theorem:approx}, where  $g_h\in W^{1, 1}([0, 1]; V_h(\Omega'))$ denotes the interpolation on $\mathcal{T}_h$ of some $g\in W^{1, 1}([0, 1]; W^{2, \infty}( \R^d))$. \EEE
     
    Then, there exists a quasi-static evolution $t\to (u(t), \Gamma(t))$ with boundary condition $g$ in the sense of Definition \ref{main def-lim}
    and a sequence $\varepsilon_n\to 0$ with corresponding sequence $h_n := h(\eps_n)\to 0$ such that,
    setting $u_n := u_{\varepsilon_n }$, $\gamma_n:= \gamma_{\varepsilon_n}$, and $\Gamma_n:= \Gamma\en$, the following holds:  
    \begin{enumerate}[label=(\roman*)]
 \item[{\rm (i)}]  \emph{Convergence of displacements:}   For all $t\in[0, 1]$ we have $u_n(t) \mathds{1}_{G(t)}  \to u(t)\mathds{1}_{G(t)} $ in $L^1(\Omega')$.  
        \item[{\rm (ii)}]  \emph{Energy convergence:} For all $t\in[0, 1]$ we have 
        \begin{align*}
            \mathcal{E}_{\varepsilon_n}(u_n(t), \gamma_n(t), \Gamma_n(t))\to\mathcal{E}(u(t), \  \Gamma(t)).
        \end{align*}
        \item[{\rm (iii)}] \emph{Convergence of bulk and surface energy:} For all $t\in [0, 1] $  \EEE  we have 
        \begin{align*}
            & \nabla u_n(t)-\gamma_n(t)\to \nabla u(t) \text{ strongly in } L^2(\Omega; \R^d)
        \end{align*}
        and 
        \begin{align*}
            \lim_{n\to\infty}
            \frac{\kappa}{2\varepsilon_n}\L^d\left( U^{\mathcal{T}}_{\varepsilon_n}(\Gamma_n(t)) \EEE\right) = \kappa\mathcal{H}^{d-1}(\Gamma(t)). 
        \end{align*}
\item[{\rm (iv)}] \emph{Convergence of crack sets:} We have  $\frac{1}{2\varepsilon_n}\mathds{1}_{\Gamma\en(t)} \rightharpoonup^* \EEE \mathcal{H}^{d-1}|_{\Gamma(t)}   $ in the sense of measures for all $t\in[0, 1]$. \EEE
\end{enumerate}
\end{theorem}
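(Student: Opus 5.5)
We follow the Francfort--Larsen strategy, adapted to the eigenfracture setting, in five steps.

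\emph{Step 1: a priori bounds.} Testing the global stability \eqref{eq:3.3} with $\bar u = g_h(t)$, $\bar\gamma=0$ and inserting this into the energy balance \eqref{eq:3.4}, a Gronwall argument gives a bound, uniform in $n$ and $t$, on $\mathcal{E}_{\varepsilon_n}(u_n(t),\gamma_n(t),\Gamma_n(t))$. This bounds $\|\nabla u_n-\gamma_n\|_{L^2}$ and $\frac{1}{2\varepsilon_n}\L^d(U^{\T}_{\varepsilon_n}(\Gamma_n(t)))$.

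\emph{Step 2: limit crack sets for all times.} The measures $\mu_n(t):=\frac{1}{2\varepsilon_n}\mathds{1}_{\Gamma_n(t)}\L^d$ (or their $\varepsilon_n$-neighbourhood versions) are increasing in $t$ and uniformly bounded. A Helly-type selection on a countable dense set, combined with monotonicity, yields a single subsequence along which they converge for every $t$, except possibly at countably many jump times; these are handled by a further diagonal extraction. To identify the limits with sets $\Gamma(t)$ we use the $\sigma$-convergence framework of \cite{GiacominiPonsiglione2006_Gamma_convergence_approach}. Set $\Gamma(t)$ to be the union over $s\le t$ in a dense set of the jump sets of $L^1$-limits of the $u_n(s)$, suitably enlarged at jump times. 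The $\Gamma$-liminf inequality of \cite[Theorem 5.1]{SchmidtFraternaliOrtiz2009}, applied to functions whose jumps lie in $\Gamma_n$, then gives $\kappa\H^{d-1}(\Gamma(t))\le\liminf \frac{\kappa}{2\varepsilon_n}\L^d(U^\T_{\varepsilon_n}(\Gamma_n(t)))$. Irreversibility (b) follows from the monotonicity of $\Gamma_n$.

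\emph{Step 3: displacements.} Compactness on the good set $G(t)$ follows from the piecewise-affine finite-element structure and $SBV$ compactness (Ambrosio). Components broken off by the crack are treated with truncation arguments as in \cite{FriedrichSteinkeStinson2025}. This gives $u(t)$ with $J_{u(t)}\,\tilde\subset\,\Gamma(t)$ and $\nabla u_n(t)-\gamma_n(t)\rightharpoonup\nabla u(t)$ in $L^2$. The initial condition (a) follows directly from static $\Gamma$-convergence with boundary data.

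\emph{Step 4: stability of unilateral minimality (main obstacle).} Given $\Phi\,\tilde\supset\,\Gamma(t)$ and $v\in AD(g(t),\Phi)$, we must build recovery pairs $(v_n,\bar\gamma_n)$ such that
\[
\limsup_n\Big[\mathcal{E}_{\varepsilon_n}(v_n,\bar\gamma_n,\Gamma_n(t))-\mathcal{E}_{\varepsilon_n}(u_n,\gamma_n,\Gamma_n(t))\Big]\le \mathcal{E}(v,\Phi)-\mathcal{E}(u(t),\Gamma(t)).
\]
We adapt the jump transfer of \cite{FrancfortLarsen2003}. First cover $\Gamma(t)$ by small cubes in which it is nearly flat. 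Next, use the coarea formula on $u_n$ to find level sets that separate the cube, choosing a \emph{minimal} separating extension with lower density bounds as in Lemma \ref{lem:minimalseparator}. Then transfer the jump of $v$ onto $\Gamma_n$ union this extension, and take $\bar\gamma_n=\nabla v_n$ on the new support. The difficulty is that the surface part measures the $\varepsilon_n$-\emph{neighbourhood}, so small $\H^{d-1}$ of the extension does not automatically give a small volume. The lower density bound and a Besicovitch/Vitali covering will be used to show that the neighbourhood volume divided by $2\varepsilon_n$ is controlled by the extension's area. The part of $\Phi\setminus\Gamma(t)$ is handled by a standard recovery sequence (condition \eqref{heps} controls the simplicial enlargement). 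Passing to the limit then gives (c).

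\emph{Step 5: energy balance and convergences.} Passing to the limit in \eqref{eq:3.4} using weak convergence of strains (dominated convergence in $s$) gives $\mathcal{E}(u(t),\Gamma(t))\le\liminf(\ldots)=\mathcal{E}(u(0),\Gamma(0))+2\int_0^t\!\int_\Omega\nabla u\cdot\mathbb{C}\nabla\partial_t g$. The converse inequality follows from (c) via the standard Riemann-sum argument. Equality of the limits then upgrades the convergences: strains converge strongly in $L^2$, the surface energies converge separately for all $t$, and (iv) follows.
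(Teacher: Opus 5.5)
There is a genuine gap. Your outline follows the paper's architecture: uniform bounds, Helly selection for $\lambda_n$, a countable set of times enlarged by the jump points of the limit, $\Gamma(t)$ as a union of jump sets, jump transfer with a minimal separating extension, and the energy balance from lower semicontinuity plus the Riemann-sum argument.

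The gap is in Step~3. You assert $\nabla u_n(t)-\gamma_n(t)\rightharpoonup\nabla u(t)$ for every $t\in[0,1]$ along the single subsequence fixed in Step~2, but nothing you describe yields this. Compactness lets you extract subsequences only at countably many times. At every other $t$ you must show that the already fixed subsequence converges, and that its limit is the $u(t)$ of the limiting evolution. You never define that $u(t)$. The paper takes the $SBV^2$ left limit of $u(t_k)$ with $t_k\nearrow t$, $t_k\in I_{\infty,0}$, and gets stability at all times and left continuity of $\nabla u$ from the Francfort--Larsen argument (Theorem~\ref{theorem:continuationtheorem}), rather than running the jump transfer at every $t$ as your Step~4 suggests.

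Everything afterwards depends on this convergence. Dominated convergence in $s$ in Step~5 needs it for a.e.\ $s$. The inequality $\mathcal{E}(u(t),\Gamma(t))\le\liminf_n\mathcal{E}_{\varepsilon_n}(\ldots)$ and items (i) and (iii) need it for every $t$.

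The paper closes this with Lemma~\ref{lem:convergenceoutsideIinfinity}. Let $w_n(s,t)$ minimize $\int_{\Omega\setminus\Gamma_n(t)}|\nabla z|^2$ over $z\in V_{h_n}(\Omega')$ with $z=g_n(s)$ on $\Omega'\setminus\overline\Omega$. Test unilateral minimality at time $t$, and at time $s$ with the competitor $(w_n(s,t),\nabla w_n(s,t)\mathds{1}_{\Gamma_n(t)})$, and use the orthogonality of $w_n(s,t)$. This gives
\[
\int_{\Omega\setminus\Gamma_n(t)}|\nabla u_n(t)-\nabla u_n(s)|^2\,dx\le C\|\nabla g_n(t)-\nabla g_n(s)\|_{L^2(\Omega)}^2+C\big(\lambda_n(t)-\lambda_n(s)\big).
\]
Since $\lambda_0$ is continuous at every $t\notin I_{\infty,0}$, you can let $n\to\infty$ and then $s\nearrow t$. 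This continuity is the real reason the jump points of $\lambda_0$ are added to the countable set. Combined with $\nabla u_n(s)\mathds{1}_{(Q_n(t))^c}\rightharpoonup\nabla u(s)$ and the left continuity of $\nabla u$, this gives weak convergence at $t$.

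Your reference to $\sigma$-convergence does not replace this argument. That framework is built for $(d-1)$-dimensional cracks with $\mathcal{H}^{d-1}$ bounds, not for the volumetric sets $\Gamma_n(t)$. Its limits also need not coincide with the union of jump sets you actually take as $\Gamma(t)$.

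Two further steps are asserted rather than proved.
\begin{enumerate}
\item \emph{$SBV$ bounds for compactness.} You do not say how to obtain them. $\nabla u_n(t)$ is unbounded in $L^2$ on $\Gamma_n(t)$. Cutting off along $\partial\Gamma_n(t)$ gives no uniform bound on the jump set, because the simplices have size $h_n\ll\varepsilon_n$. The paper instead cuts off on the union $Q_n(t)$ of grid cubes of side $\varepsilon_n/\sqrt d$ meeting $\Gamma_n(t)$. By the energy bound there are $\lesssim\varepsilon_n^{1-d}$ such cubes, so $\mathcal{H}^{d-1}(\partial Q_n(t))\le C$ and $\mathcal{L}^d(Q_n(t))\to0$.
\item \emph{The bound $\mathcal{H}^{d-1}(\Gamma(t))\le\lambda_0(t)$.} For a union of jump sets at countably many times this does not follow from a single application of the $\Gamma$-liminf inequality. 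You need its localized version on open sets, together with pairwise disjoint open sets $V_i$ that almost carry the sets $J_{u(s_i)}$, as in Lemma~\ref{lemma:firstsummary}.
\end{enumerate}
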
    

From the result in  \cite{SchmidtFraternaliOrtiz2009}, we get that $ \gamma_{n} \EEE (t) \to D^su(t)$ in the flat norm for all $t \in [0,1]$. Item (iv) can be understood as the evolutionary counterpart taking the union of all cracks up to time $t$ into account. \EEE

%
%
%


Theorem \ref{theorem:approx} and Theorem \ref{theorem:griffithmin} will be  proven \EEE in Sections \ref{section:ExistenceForFixedEpsilon}--\ref{chapter:Limitepsilontozero}.  In the proofs, we will simply assume that $Q(F) = |F|^2$  and $\kappa=1$  for notational convenience, as the adaptations for the general case are straightforward. \EEE
\subsection{Further results}\label{sec:further}

In   this  subsection, we present two \EEE further results which represent variants of the statements given in Subsections \ref{sec: main1}--\ref{sec: main2}.

\subsection*{Simultaneous limit}

In the previous two  subsections, we derived a quasi-static crack evolution for the energy \eqref{limitgama} by subsequently passing to a time-continuous limit ($m \to \infty$) and afterwards to the limit of vanishing eigenstrain-approximation ($\eps \to 0$). We now discuss a simultaneous limit in the time discretization and space approximation. Considering simultaneous space-time discretizations is not only relevant for numerical implementation (see e.g.\ \cite[Section 6]{SchmidtFraternaliOrtiz2009}), but is an approach which has recently been adopted in analytical results for the approximation of crack growth \cite{FriedrichSeutter2025, GiacominiPonsiglione2006_Gamma_convergence_approach, FriedrichSteinkeStinson2025, CrismaleFriedrichSeutter2025}.


We consider a sequence $\varepsilon_n \to 0$ and the corresponding sequence $h_n :=h(\eps_n) \to 0$ as in \eqref{heps}. We consider the time-discretized setting introduced in Subsection \ref{sec: main1}, where the time discretization is now denoted by  $I_n=\{0=t^n_0<t^n_1<\dots t^n_n\}\subset I_\infty$  with $\lim_{n \to \infty} \max_{i=1}^n (t^n_i-t_{i-1}^n) = 0$. We consider the  sequence of   minimization problems \eqref{eq:mineps0} and \eqref{eq:minepsi+1}  (with $t_i^n$ in place of $t_i^m$), and denote by $(u^n_i, \gamma^n_i)$  the solutions for the respective problems. We extend to functions $(\hat{u}_n(t), \hat{\gamma}_n \EEE (t))$ on the entire time interval $[0,1]$ via a piecewise constant interpolation in time, i.e., $\hat{u}_n(t)=u^n_i$  and $\hat{\gamma}_n(t)=\gamma^n_i$ for $t^n_i\leq t<t^n_{i+1}$. We define $\hat{\Gamma}_n(t):=   \bigcup_{\tau \in I^t_n}   \{\hat{\gamma}_n(\tau) \neq 0 \EEE \}$.

\begin{theorem}[Simultaneous limit]\label{th: simlutaneous}
Consider a sequence $\varepsilon_n \to 0$, the corresponding sequence $h_n :=h(\eps_n) \to 0$, and a time discretization $I_n=\{0=t^n_0<t^n_1<\dots t^n_n\}\subset I_\infty$  with $\lim_{n \to \infty} \max_{i=1}^n (t^n_i-t_{i-1}^n) = 0$. Let $t \mapsto (\hat{u}_n(t),\hat{\gamma}_n(t))$ be the time-discrete evolution given by \eqref{eq:mineps0} and \eqref{eq:minepsi+1}. 

Then, there exists a subsequence of $(\eps_n)_n$ (not relabeled) and   a quasi-static evolution $t\to (u(t), \Gamma(t))$ with boundary condition $g$ in the sense of Definition \ref{main def-lim} such that items {\rm (i)}--{\rm (iv)} of Theorem \ref{theorem:griffithmin} hold with $(\hat{u}_n,\hat{\gamma}_n,\hat{\Gamma}_n)$ in place of $({u}_n,{\gamma}_n,{\Gamma}_n)$.
\end{theorem}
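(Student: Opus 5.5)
We would follow the same strategy as in the proof of Theorem~\ref{theorem:griffithmin}, with the time-continuous eigenfracture evolutions $(u_n,\gamma_n,\Gamma_n)$ replaced by the piecewise constant interpolants $(\hat u_n,\hat\gamma_n,\hat\Gamma_n)$. The point is that the $\eps\to 0$ analysis never really used time-continuity at the $\eps$-level. It used only three ingredients:
\begin{enumerate}
\item[(1)] uniform energy bounds together with monotonicity of the crack sets $\hat\Gamma_n(t)$;
\item[(2)] discrete global stability at every time;
\item[(3)] an approximate energy inequality or balance with an error that vanishes as $n\to\infty$.
\end{enumerate}
All three are available for the time-discrete scheme \eqref{eq:mineps0}--\eqref{eq:minepsi+1}.

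\textbf{Step 1 (discrete estimates).} Testing the minimality of $(u^n_{i+1},\gamma^n_{i+1})$ with the competitor $(u^n_i+g^n_{h,i+1}-g^n_{h,i},\gamma^n_i)$ gives a discrete energy inequality. Summing over $i$ and using $g\in W^{1,1}([0,1];W^{2,\infty})$ together with Gronwall yields
\[
\sup_t \E_{\eps_n}(\hat u_n(t),\hat\gamma_n(t),\hat\Gamma_n(t))\le C .
\]
It also gives the upper estimate
\[
\E_{\eps_n}(t)\le \E_{\eps_n}(0)+2\int_0^{t}\int_\Omega (\nabla \hat u_n-\hat\gamma_n)\cdot\mathbb C\,\partial_t\nabla g_{h_n}\,\derx\,\ders+o(1),
\]
where the $o(1)$ comes from the mesh size $\max_i(t^n_i-t^n_{i-1})\to0$ and the absolute continuity of $\partial_t g$. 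Stability holds at the discrete nodes $t\in I_n$, in the form \eqref{eq:3.3} with $\hat\Gamma_n(t)$. Between nodes it holds with $g_{h_n}(t^n_{k(n,t)})$ in place of $g_{h_n}(t)$, which differs by $o(1)$ in $W^{1,\infty}$ uniformly in $t$.

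\textbf{Step 2 (compactness).} As in the proof of Theorem~\ref{theorem:griffithmin}, apply a Helly-type selection to the increasing sets $\hat\Gamma_n(t)$, viewed through the measures $\mu_n(t)=\frac{1}{2\eps_n}\mathds 1_{\hat\Gamma_n(t)}\L^d$. These are monotone in $t$ and uniformly bounded, so along a diagonal subsequence they converge weakly* for all $t$, first on the countable set $I_\infty$ and then extended using monotonicity. The limit crack $\Gamma(t)$ is defined from these measures, with the refinement giving convergence at every time. Compactness of $\hat u_n(t)$ on the good sets $G(t)$ and the limits $u(t)$ are then obtained exactly as there, using the $\Gamma$-liminf inequality of \cite{SchmidtFraternaliOrtiz2009}. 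This establishes irreversibility and the lower bound on the surface energy.

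\textbf{Step 3 (stability, the main obstacle).} Pass discrete stability to the limit via the stability-of-unilateral-minimizers result of Section~\ref{section:stabilityresult}. This is the jump-transfer construction based on minimal separating extensions (Lemma~\ref{lem:minimalseparator}) and covering arguments. We expect it to apply verbatim, because it only requires that $(\hat u_n(t),\hat\gamma_n(t))$ be stable for the frozen crack $\hat\Gamma_n(t)$ with boundary data converging to $g(t)$. The only new point is the boundary datum mismatch $g_{h_n}(t^n_{k(n,t)})$ versus $g(t)$. We would handle it by adding $g(t)-g_{h_n}(t^n_{k(n,t)})$ to the recovery competitor; this changes the elastic energy by $o(1)$ and leaves the crack unchanged.

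\textbf{Step 4 (energy balance and convergence).} Stability and the lower-semicontinuity results give the lower energy inequality for the limit evolution, via the standard Riemann-sum argument of \cite{FrancfortLarsen2003}. Combining it with the limsup of the discrete upper estimate from Step~1 yields:
\begin{itemize}
\item the energy balance;
\item the energy convergence in (ii);
\item strong $L^2$ convergence of $\nabla\hat u_n-\hat\gamma_n$, which requires weak convergence of the stresses, obtained from the stability argument, together with equality of the norms;
\item convergence of the surface energies, which gives (iii) and (iv).
\end{itemize}
Since every limit above is along the full time-discrete sequence, no intermediate limit $m\to\infty$ is needed.
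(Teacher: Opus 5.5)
Your Steps 1, 3 and 4 essentially reproduce the paper's sketch. This includes the one-sided discrete energy estimate with a vanishing error, and stability from \eqref{eq:minepsi+1} at the nodes, with the boundary mismatch absorbed by shifting the recovery sequence by the difference of the boundary data. It also includes the energy balance, closed by combining $\mathcal{E}(t)\le\liminf_n\mathcal{E}_n(\hat u_n(t),\hat\gamma_n(t),\hat\Gamma_n(t))$ with the reverse inequality coming from stability of the limit (Theorem~\ref{theorem:continuationtheorem}).

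The problem is Step~2, the one place where you depart from the paper. You define the limit crack $\Gamma(t)$ from weak* limits $\mu(t)$ of the rescaled crack measures. A priori, the localized $\Gamma$-liminf inequality (cf.\ \eqref{gammaV}) only gives $\mu(t)\ge\mathcal{H}^{d-1}|_{J_{u(\tau)}}$ for the relevant $\tau\le t$. In addition, $\mu(t)$ may carry mass from broken simplices that never separate the displacements in the limit, so it need not be of the form $\mathcal{H}^{d-1}|_{\Gamma}$ for a rectifiable $\Gamma$. A crack defined from $\mu(t)$ is therefore not known to be rectifiable. More seriously, you cannot prove stability with respect to it. The jump transfer of Section~\ref{section:stabilityresult} moves the competitor's jump only onto $J_{u(t)}$, i.e.\ onto the places where $y_n=\hat u_n(t)\mathds{1}_{(\hat\Gamma_n(t))^c}$ actually separates (through the level sets $E^n_{t_i}$). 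Enlarging $\Gamma(t)$ makes the stability inequality stronger, so Step~3 would not apply verbatim to your $\Gamma(t)$.

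The fix is to keep the paper's order of construction.
\begin{enumerate}
\item Apply Helly only to the scalar functions $\lambda_n(t)=\frac{1}{2\varepsilon_n}\mathcal{L}^d(U^{\mathcal{T}}_n(\hat\Gamma_n(t)))$.
\item Extract SBV-compactness of the cut-offs $z_n(t)=\hat u_n(t)\mathds{1}_{(Q_n(t))^c}$ on $I_{\infty,0}=I_\infty\cup\mathcal{N}_0$. Including the jump points $\mathcal{N}_0$ of $\lambda_0$ is what later gives convergence at \emph{all} times, since Lemma~\ref{lem:convergenceoutsideIinfinity} is proved off $I_{\infty,0}$ using continuity of $\lambda_0$ there.
\item Set $\Gamma(t)=\bigcup_{\tau\in I^t_{\infty,0}}J_{u(\tau)}$, with the lower bound of Lemma~\ref{lemma:firstsummary}.
\end{enumerate}
The identification of the limit measure with $\mathcal{H}^{d-1}|_{\Gamma(t)}$ is an \emph{output}, not an input. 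Once the energy sandwich gives (iii), the localized lower bound \eqref{measure} applied on $A$ and on $\Omega'\setminus\overline{A}$ forces equality. So no measure-valued Helly selection is needed.

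Note also that the measures carrying the surface energy are $\frac{1}{2\varepsilon_n}\mathds{1}_{U^{\mathcal{T}}_n(\hat\Gamma_n(t))}\mathcal{L}^d$, as in the paper's proof of (iv). The simplices $\hat\Gamma_n(t)$ themselves typically have volume of order $h_n\ll\varepsilon_n$.
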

\subsection*{Continuous eigenfracture approximation} 
Besides its potential relevance for numerical approximation schemes (see \cite[Section 5]{SchmidtFraternaliOrtiz2009}), we have adopted a 
finite-element discretization  of the eigendeformation approximation also for analytical reasons. Indeed, without spatial discretization, compactness for the variables $(u,\gamma)$ in \eqref{eq:energydefinitionepsilon} is not guaranteed which in turn impedes the derivation of an existence result as in Theorem \ref{theorem:approx} (actually, already in a static setting). Yet, we can also work with the `standard' eigenfracture approximation 
\begin{align*}
    \mathcal{F}_\varepsilon(u, \gamma,\Psi) =  \int_{\Omega'} Q(\nabla u-\gamma)\,\derx + \frac{\kappa}{2\varepsilon}\L^d\big(U_\varepsilon(\Psi\cup \{\gamma\neq 0\})\big)
\end{align*}
for $(u,\gamma) \in W^{1,1}(\Omega') \times L^1(\Omega')$ \EEE without finite-element discretization if we relax the formulation to  \textit{almost  minimizers}.  \EEE

We start with a time discretization $I_m=\{0=t^m_0<t^m_1<\dots t^m_m\}$. Let us also define a sequence $\varepsilon=\varepsilon(m)$ with $\varepsilon\to 0$ for $m\to\infty$. First, the analog of \eqref{eq:mineps0} is
\begin{align*}
      \inf\big\{\mathcal{F}_\varepsilon(u, \gamma, \emptyset)\colon (u, \gamma) \in  W^{1,1}(\Omega')\times L^1(\Omega'), \ u=g(0),  \ \gamma=0 \text{ on } \Omega'\setminus\overline{\Omega}\big\}.
\end{align*}
Since we do not have the necessary compactness for $\gamma$ in this setting, we will set $ (u'_{0,m}, \gamma'_{0,m}) \in  W^{1,1}(\Omega')\times L^1(\Omega') $ to be a \textit{almost  minimizer}, i.e., \EEE a pair that satisfies 
\begin{align*}
    \mathcal{F}_\varepsilon(u'_{0,m}, \gamma'_{ 0,m},\emptyset)\leq \inf\{\mathcal{F}_\varepsilon(u, \gamma,\emptyset)\colon  (u, \gamma) \in  W^{1,1} \EEE (\Omega')\times L^1(\Omega'), \ u=g(0), \ \gamma=0 \text{ on } \Omega'\setminus\overline{\Omega}\} + \frac{\varepsilon}{m}. \EEE
\end{align*}
With this, we can repeat the iteration argument for the problem 
\begin{align*}
    \inf\big\{\mathcal{F}_\varepsilon\big(u, \gamma, \bigcup\nolimits_{0\leq k\leq i}\{\gamma'_{k,m}\neq 0\}\big)\colon (u, \gamma) \in W^{1,1}(\Omega')\times L^1(\Omega'), \EEE \ u=g(t^m_{i+1}),   \  \gamma=0 \text{ on } \Omega'\setminus\overline{\Omega}\big\},
\end{align*}
and choose $(u'_{i+1,m}, \gamma'_{ i+1,m})$ whose energy is $\frac{\eps}{m}$ close to the infimum. Then, we extend to functions on the entire time interval $[0, 1]$ through a piecewise constant interpolation in time as before, such that we get $u'_m(t)$ and $\gamma'_m(t)$. We also define $\Gamma'_m(t):= \bigcup_{\tau\in I^t_m}\{\gamma'_m(\tau)\neq 0\}$.

In this setting, we then have the following result.

\EEE
\begin{theorem}[A version of Theorem \ref{theorem:griffithmin} for the continuous eigenfracture approximation]
    \label{theorem:griffithminNondiscrete}
 For all $\varepsilon >0$, \EEE we let $t\to(u'_{\varepsilon}(t), \gamma'_{\varepsilon}(t), \Gamma'_\eps(t))$ be the evolutions with boundary data $g$ given above, where $g\in W^{1, 1}([0, 1]; W^{2, \infty}( \R^d \EEE ))$. Then there exists a quasi-static evolution $t\mapsto (u(t), \Gamma(t))$ with boundary data $g$ in the sense of Definition \ref{main def-lim} and a (not relabeled) subsequence $\varepsilon_{ m}\to 0$ such that, setting $u'_{ m}:= u'_{ \varepsilon_m}$, $\gamma'_{ m}:= \gamma'_{ \varepsilon_m}$,  and $\Gamma'_{ m}:= \Gamma'_{ \varepsilon_m}$ \EEE,  items {\rm (i)}--{\rm (iv)} of Theorem \ref{theorem:griffithmin} hold with $({u}'_{ m},{\gamma}'_{ m},{\Gamma}'_{ m})$ in place of $({u}_n,{\gamma}_n,{\Gamma}_n)$. \EEE            
\end{theorem}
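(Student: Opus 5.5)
The plan follows the proof of Theorem~\ref{theorem:griffithmin} step by step. The time-discrete almost minimizers $(u'_m,\gamma'_m,\Gamma'_m)$ play the role of the evolutions $(u_n,\gamma_n,\Gamma_n)$. Since $\varepsilon=\varepsilon(m)\to 0$ along with $m\to\infty$, the argument is really a simultaneous limit in the spirit of Theorem~\ref{th: simlutaneous}. The finite-element discretization was only needed for existence at fixed $\varepsilon$. Here existence is replaced by almost minimality, with total error at most $m\cdot\varepsilon/m=\varepsilon\to 0$.

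\textbf{Step 1 (a priori bounds and discrete energy inequality).} Testing the $i$-th almost-minimization problem with $(u'_{i,m}+g(t^m_{i+1})-g(t^m_i),\ \gamma'_{i,m},\ \text{previous crack})$ gives an upper energy estimate. Summing over $i$ yields
\[
\mathcal{F}_\varepsilon(u'_m(t),\gamma'_m(t),\Gamma'_m(t))\le \mathcal{F}_\varepsilon(u'_m(0),\gamma'_m(0),\emptyset)+2\int_0^{t}\int_\Omega(\nabla u'_m-\gamma'_m)\cdot\mathbb{C}\,\partial_t\nabla g\,\derx\,\ders+o(1),
\]
where the $o(1)$ collects the $\varepsilon/m$ errors and the time-discretization errors. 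This uses $g\in W^{1,1}(W^{2,\infty})$, and no interpolation error occurs since there is no $g_h$. By Gronwall, the energies are bounded uniformly in $m$ and $t$. By the analog of \eqref{eq:identityyni}, we may assume $\gamma'_m=\nabla u'_m\mathds 1_{\Gamma'_m}$ up to an $\varepsilon/m$ error. Then $v_m:=u'_m\mathds 1$ outside the crack neighborhood is controlled, exactly as before.

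\textbf{Step 2 (compactness).} First apply Helly's theorem to the monotone set functions $t\mapsto \frac{1}{2\varepsilon}\mathds 1_{\Gamma'_m(t)}\L^d$, or to their neighborhood volumes. Then apply the $\Gamma$-liminf/compactness part of \cite{SchmidtFraternaliOrtiz2009} in its continuous version, which is the one actually proven there, to the strains $\nabla u'_m-\gamma'_m$. This produces $\Gamma(t)$, $u(t)\in AD(g(t),\Gamma(t))$, and the liminf inequality. Displacement convergence on $G(t)$ follows as in Theorem~\ref{theorem:griffithmin}, via the methods of \cite{FriedrichSteinkeStinson2025}. Irreversibility and the initial condition follow as before, the latter from $\Gamma$-convergence together with almost minimality at time $0$.

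\textbf{Step 3 (stability; main obstacle).} We must show that almost unilateral minimality passes to the limit. Here I would reuse the stability result of Section~\ref{section:stabilityresult}, that is, the jump transfer with a minimal separating extension (Lemma~\ref{lem:minimalseparator}) and the covering argument. Given a competitor $v\in AD(g(t),\Phi)$, that construction yields a recovery pair $(v_m,\eta_m)$ whose additional neighborhood volume over $U_\varepsilon(\Gamma'_m(t))$ is at most $2\varepsilon(\H^{d-1}(\Phi\setminus\Gamma(t))+o(1))$. The construction simplifies here, because the neighborhoods are plain $U_\varepsilon$ with no simplex enlargement, so condition \eqref{heps} is not needed. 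Almost minimality then gives stability up to an error $\varepsilon/m\to0$. The delicate point is that the stability proof must not use exact minimality of $(u'_m,\gamma'_m)$. I expect it uses only the inequality against competitors, in which case the additive error is harmless.

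\textbf{Step 4 (energy balance and convergence of energies).} The lower energy inequality follows from stability by the standard Riemann-sum argument of \cite{FrancfortLarsen2003}. Combining it with Step 1 and the strong convergence of the strains, obtained from stability by testing with $u(t)$, gives equality of the limit energies. This yields the energy balance \eqref{energybalance} and items (ii)--(iii). Item (iv) then follows from the convergence of the surface terms together with the liminf inequality for the measures.
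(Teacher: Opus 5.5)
Your plan is essentially the paper's proof. Like the paper, you rerun the simultaneous-limit argument of Theorem \ref{th: simlutaneous}, absorb the $\varepsilon/m$ almost-minimality errors into the vanishing error term (they enter only through inequalities against competitors), and note that the jump-transfer construction of Section \ref{section:stabilityresult} only gets simpler without the simplex enlargement. Your one reordering is harmless: you get strong convergence of the strains by testing stability with $u(t)$, whereas the paper deduces it afterwards from energy convergence and lower semicontinuity; at times outside $I_{\infty,0}$ both routes still need the analog of Lemma \ref{lem:convergenceoutsideIinfinity} to identify the limit.
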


\EEE

\section{Existence of the quasi-static eigenfracture approximation}
\label{section:ExistenceForFixedEpsilon}

This section is devoted to the proof of Theorem \ref{theorem:approx}. Fix some $\varepsilon>0$ with its associated $h=h(\varepsilon)>0$ and the mesh $\mathcal{T}_h$. Recall that $g_h$ denotes the \EEE continuous, piecewise affine interpolation of $g$ on the triangulation $\mathcal{T}_h$. \EEE Moreover,  we introduce the \EEE notation   $g^m_h(t):= g^{m}_{h, i}= g_h(t^m_i)$ for all $t^m_{i} \le t< t^m_{i+1}$. \EEE With this, we have  $g^m_h(t) \to g_{ h}(t)$ strongly in  $H^1(\Omega')$ \EEE for all $t\in [0, 1]$ and, due to construction of the $I_m$, even $g^m_{h}(t)=g_{ h}(t)$ for $t\in I_\infty$ and $m$ large enough. Further, we recall the time-discrete evolution $(u^m_i,  \gamma^m_i \EEE )_{0 \le i\le m}$ as given in  \eqref{eq:mineps0} and \eqref{eq:minepsi+1}. 

\textbf{Existence of minimizers:} Let us first observe that the existence of minimizers for the problems  \eqref{eq:mineps0} and \eqref{eq:minepsi+1} is standard since the problem is posed on a finite-dimensional space  and for minimizers we can assume the bounds 
\begin{align}\label{apriori bound}
\Vert u^m_i \Vert_{L^\infty(\Omega')} \le \Vert g_h \Vert_{L^\infty(\Omega')}, \quad \quad \quad \Vert \nabla u^m_i \Vert_{L^\infty(\Omega')} \le C_h \EEE \Vert u^m_i \Vert_{L^\infty(\Omega')}.
\end{align} \EEE Indeed, the first estimate follows by a standard truncation argument, and the second follows from the fundamental theorem of calculus, where $C_h$ depends on the minimal size of  simplices \EEE in the triangulation $\mathcal{T}_h$. This yields compactness for the displacement, and compactness for the eigenstrain field immediately follows from \eqref{eq:identityyni}.  
We introduce an \emph{interpolation in time} by defining the piecewise  constant function
\begin{align*}
    u^m_\varepsilon(t)=u^m_i, \quad \gamma^m_\varepsilon(t)=\gamma^n_i\quad \text{for }t^m_i\leq t<t^m_{i+1}. 
\end{align*}

\textbf{Discrete energy estimate:} Recall the definition of $\Gamma^m_\varepsilon(t)$ in \eqref{gammam} and the definition of the neighborhood $U^{\mathcal{T}}_\eps$ introduced before \eqref{eq:energydefinitionepsilon}.
Following the reasoning in \cite[Section 3.2]{FrancfortLarsen2003}, we derive a discrete energy estimate. Testing \eqref{eq:minepsi+1} at fixed time $t^m_{i+1}\in I_m \EEE$  with the admissible \EEE  competitor $(u^m_{i}+g^m_{h, i+1}-g^m_{h, i}, \gamma^m_{i}) \EEE $, we get 
\begin{align*}
        \E_\varepsilon\big(u^m_{i+1}, \gamma^m_{i+1}, \Gamma^m_\varepsilon(t^m_i)\big)\leq \,&\E_\varepsilon\big(u^m_{i}+g^m_{h, i+1}-g^m_{h, i}, \gamma^m_{i}, \EEE \Gamma(t^m_i)\big)\notag\\
        =\,& \intnormal{u^m_i+\nabla g^m_{h, i+1}-\nabla g^m_{h, i}-\gamma^m_i}+\frac{1}{2\varepsilon}\L^d\big(U^{\mathcal{T}}_\varepsilon\big(\Gamma(t^m_i)\cup \{\gamma^m_i\neq 0\}\big)\big)\notag \\
        \leq\,& \E_\varepsilon(u^m_i, \gamma^m_i,  \Gamma(t^m_{i}) \EEE )+2\int_{t^m_i}^{t^m_{i+1}}\int_\Omega (\nabla u^m_{i}-\gamma^m_i)\cdot \partial_t\nabla g_h(s)\,\derx\,\ders \notag \\
        &+ e(m)\int_{t^m_i}^{t^m_{i+1}}\|\partial_t\nabla g_h(s)\|_{L^2(\Omega)}\,\der{s}  ,
    \end{align*}
with $e(m):= \max_{t^m_i \in I_m}\int_{t^m_i}^{t^m_{i+1}}\|\partial_t\nabla g_h(s)\|_{L^2(\Omega)}\,\der{s}$ being monotonously decreasing and vanishing for $m\to\infty$ due to the regularity of $g$. \EEE Note that in the last step we have used the fundamental theorem of calculus and Minkowski's integral inequality. Exploiting    $\E_\varepsilon(u^m_{i+1}, \gamma^m_{i+1}, \Gamma^m_\varepsilon(t^m_i)) =  \E_\varepsilon(u^m_{i+1}, \gamma^m_{i+1}, \Gamma^m_\varepsilon(t^m_{i+1}))$ (see \eqref{gammam}) and iterating this estimate in time leads to the following inequality for arbitrary  times \EEE $0\leq t_1\leq t_2\leq 1$: 
\begin{align}
        \label{eq:ineqtimeintn}
        \E_\varepsilon\big(u^m_\varepsilon(t_2), \gamma^m_\varepsilon(t_2), \Gamma_\eps^m(t_2)\big) \leq\,& \E_\varepsilon\big(u^m_\varepsilon(t_1), \gamma^m_\varepsilon(t_1), \Gamma_\eps^m(t_1)\big) + 2\int_{t_{i_1}^m}^{t_{i_2}^m}\int_\Omega(\nabla u_{\varepsilon}^m(s)-\gamma_\varepsilon^m(s))\cdot \partial_t\nabla g_h\EEE (s)\,\derx\,\ders \notag \\
                &+ e(m) \int_{t_{i_1}^m}^{t_{i_2}^m} \|\partial_t\nabla g_h\EEE (s)\|_{L^2(\Omega)}\,\ders, 
  \end{align}
with $i_1$ and $i_2$ chosen such that $t_{i_1}^m\leq t_1<t_{i_1+1}^m$ and $t_{i_2}^m\leq t_2<t_{i_2+1}^m$.

\textbf{Limiting passage $m\to\infty$:}  
 By means of Helly's theorem, up to passing to a subsequence (not relabeled), we can  ensure that $\lambda^m_\varepsilon(t):= \frac{1}{2\varepsilon}\L^d\left(U^{\mathcal{T}}_\varepsilon(\Gamma^m_\varepsilon(t)\right)$ converges to some increasing limiting function $\lambda_\eps$ for all $t\in [0,1]$. We denote the discontinuity points of $\lambda_\eps$ by $\mathcal{N}_\eps$ and note that $\mathcal{N}_\eps$ is at most countable. (Actually, $\mathcal{N}_\eps$ is even finite since the mappings $\lambda^m_\varepsilon$ satisfy $\lambda^m_\varepsilon([0,1]) \subset D_{\eps,h}$ for some finite set $D_{\eps,h} \subset \R$ independent of $m$. Yet, this will not be needed in the following.) For brevity, we introduce 
\begin{align}\label{iinftyeps} 
I_{\infty,\eps} :=I_\infty  \cup \mathcal{N}_\eps, \quad \quad I_{ \infty, \eps }^{ t } := \left\{ \tau \in I_{ \infty, \eps } \colon \ \tau \leq t \right\}.  
\end{align}
\EEE With the same arguments as for the existence of time-discrete solutions, see \eqref{apriori bound}, \EEE we obtain compactness for the sequences  $(u^m_\varepsilon(t), \gamma^m_\varepsilon(t))_m$ for each $t\in I_{\infty,\eps} \EEE$, up to passing to a further subsequence (not relabeled). Therefore, by a diagonal argument, we obtain a single subsequence (not relabeled) and limits $(u_\eps(t),\gamma_\eps(t)) \in V_h(\Omega')\times W_h(\Omega')$  satisfying $u_\varepsilon(t)=g_h(t)$, $\gamma_\varepsilon(t)=0$ on $\Omega' \setminus \overline{\Omega}$ for $t \in I_{\infty,\eps} \EEE$ such that $u^m_\varepsilon(t)\to u_\varepsilon(t)$  in $H^{1}(\Omega')$  \EEE and $\gamma^m_\varepsilon(t)\to \gamma_\varepsilon(t)$  in $L^1(\Omega')$ \EEE  for all $t\in I_{\infty, \eps}$. Note that, due to the definition of $V_h(\Omega')$ and $W_h(\Omega')$, the convergence also holds pointwise. From now on, if not stated otherwise,  all convergences are intended with respect to this subsequence. \EEE

To extend the notion to the entire time interval $[0, 1]$,
 let $t\notin I_{\infty, \eps} $ and $t_k\nearrow t$ with $t_k\in I_{\infty, \eps}$ for every $k\in\N$. Then, with the same arguments as above, we find a subsequence of $(t_k)_k$ (not relabeled) such that   $(u_\varepsilon(t_k), \gamma_\varepsilon(t_k))$  converges in the $H^1(\Omega') \times L^{1}(\Omega')$-sense  \EEE to a limit, denoted by  $(u_\varepsilon(t), \gamma_\varepsilon(t))$. \EEE Later we will see that this limit coincides with the limit of  $(u^m_\varepsilon(t), \gamma^m_\varepsilon(t))$ as $m \to \infty$  for \emph{all} $t \in [0,1]$. \EEE

Defining \EEE  $ \Gamma_\varepsilon(t):= \bigcup_{ \tau\in I^t_{\infty,\eps}} \EEE \{\gamma_\varepsilon(\tau)\neq 0\}$ for all $t \in [0,1]$, we have the following property on the supports of the eigenstrain fields. \EEE  
\begin{lemma}[Eigenstrain supports]
    \label{lem:stabilityjumpset}
    For every $t\in [0, 1]$, we find $m_0 \in \N$ depending on $t$ such that  $ \bruchdisc(t) \subset \bruchdiscn(t)$ for all  $m \ge m_0$.   
\end{lemma}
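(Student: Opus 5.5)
The whole argument rests on the finiteness of the triangulation. Since $\gamma_\varepsilon(\tau)\in W_h(\Omega')$ is piecewise constant on the simplices of $\mathcal{T}_h$ and vanishes outside $\overline{\Omega}$, each set $\{\gamma_\varepsilon(\tau)\neq 0\}$ is a union of simplices taken from the finite family $\mathcal{T}_h^\Omega$ of simplices meeting $\Omega$. (We ignore simplex boundaries, which are $\mathcal{L}^d$-negligible, or we read $\{\gamma\neq0\}$ as a union of closed simplices, as the convention in the paper suggests.) Hence $\Gamma_\varepsilon(t)$ is a union of simplices from this finite family. It therefore suffices to show that for every simplex $T\subset\Gamma_\varepsilon(t)$ there is $m_T$ with $T\subset\Gamma^m_\varepsilon(t)$ for all $m\ge m_T$. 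We then take $m_0:=\max_T m_T$, which is a maximum over finitely many simplices.

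Fix such a $T$. By definition there is $\tau\in I^t_{\infty,\varepsilon}$ with $\gamma_\varepsilon(\tau)\neq0$ on $T$, and we split into two cases. In the first case $\tau\in I_{\infty,\varepsilon}$ is one of the times at which the limit was obtained directly, i.e.\ $\gamma^m_\varepsilon(\tau)\to\gamma_\varepsilon(\tau)$ in $L^1(\Omega')$. On the simplex $T$ these are constants, so $L^1$ convergence gives convergence of the values on $T$. Consequently $\gamma^m_\varepsilon(\tau)|_T\neq0$ for all $m$ large. Now $\gamma^m_\varepsilon(\tau)=\gamma^m_{k}$ with $t^m_k\le\tau\le t$, so $t^m_k\in I^t_m$, and by \eqref{gammam} we get $T\subset\{\gamma^m_k\neq0\}\subset\Gamma^m_\varepsilon(t)$.

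The statement defines $\Gamma_\varepsilon(t)$ only through $\tau\in I^t_{\infty,\varepsilon}$, and on this whole set the limits $\gamma_\varepsilon(\tau)$ were constructed as $m$-limits along the diagonal subsequence. So the first case already covers every $\tau$ that enters the definition. The extension to times $t\notin I_{\infty,\varepsilon}$ only affects the value $\gamma_\varepsilon(t)$ itself, not $\Gamma_\varepsilon(t)$. The argument is therefore complete, and the only care needed is the following bookkeeping. First, $\tau\le t$ and $t^m_k\le\tau$, which guarantees $t^m_k\in I^t_m$; this holds because $k$ is chosen with $t^m_k\le\tau<t^m_{k+1}$. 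Second, $\Gamma^m_\varepsilon(t)$ is monotone in $t$, so no issue arises when $\tau<t$.

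The main (mild) obstacle is that infinitely many times $\tau$ may contribute to $\Gamma_\varepsilon(t)$, so the threshold $m$ obtained from the first case depends on $\tau$. This is resolved by the finiteness of the mesh: for each of the finitely many simplices in $\Gamma_\varepsilon(t)$ we choose one witnessing time $\tau_T$, and then take the maximum of the finitely many resulting thresholds.
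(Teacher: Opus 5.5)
Your proof is correct and follows the paper's argument. You use the finiteness of the simplices making up $\Gamma_\varepsilon(t)$, a witnessing time in $I^t_{\infty,\varepsilon}$ for each simplex at which the convergence $\gamma^m_\varepsilon\to\gamma_\varepsilon$ forces $\gamma^m_\varepsilon\neq 0$ on that simplex for large $m$, and a maximum over finitely many thresholds. The only difference is that the paper singles out a ``first activation time'' $s$ for each simplex, which is unnecessary (and not obviously attained). As you note, any witnessing time $\tau\in I^t_{\infty,\varepsilon}$ suffices.
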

\begin{proof}
We first observe that we have a finite amount of simplices in $\Gamma_\varepsilon(\cdot)$ by construction. Fix a time step $t\in [0, 1]$ and let us  consider \EEE a simplex $T$ with $\gamma_\varepsilon(t) \neq \EEE 0$ on $T$. Then there exists a  time \EEE $s\in I^t_{\infty, \varepsilon}$ such that $\gamma_\varepsilon(r) = 0$ on $T$ for $r<s$ and $\gamma_\varepsilon(s)\neq 0$ on $T$. By pointwise convergence  there exists $m_0 = m_0(T) \in \N$ \EEE such that  $\gamma^m_\varepsilon(s)\neq 0$ on $T$ for all $m\geq m_0$ and therefore $T\subset \Gamma^m_\varepsilon(t)$ for all $m\geq m_0$. Repeating this procedure for every $T  \subset \EEE \Gamma_\varepsilon(t)$, we can find a joint $m_0$  depending only on $t$ \EEE such that $\gamma^m_\varepsilon(t)  \neq \EEE 0$ on $\Gamma_\varepsilon(t)$ for $m\geq m_0$.  
\end{proof}

 Note that, at this stage,  equality of $ \bruchdisc(t)$ and $\bruchdiscn(t)$ for large  $m$ cannot guaranteed as possibly $\gamma^m_\varepsilon(t) \to 0$ on some $T \in \mathcal{T}_h$. \EEE Next, we address  \EEE  the convergence of the strains  for all $t\in [0,1]$.  
\begin{lemma}[Convergence of strains\EEE]
    \label{lem:PointwiseConvergence}
For all $t \in [0,1]$ it holds that  $\nabla u^m_\varepsilon(t)\mathds{1}_{(\Gamma^m_\varepsilon(t))^c}\to \nabla u_\varepsilon(t)-\gamma_\varepsilon(t)$  in $L^2(\Omega')$  as $m \to \infty$. \EEE
\end{lemma}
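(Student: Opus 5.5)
The idea is to treat times in $I_{\infty,\eps}$ directly and then reach all other times by approximation. The approximation step uses two facts: the crack functional only takes finitely many values, and the strain is unique for a quadratic minimization problem.

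\textbf{Step 1: $t\in I_{\infty,\eps}$.} Along the chosen subsequence we have $u^m_\eps(t)\to u_\eps(t)$ in $H^1(\Omega')$ and $\gamma^m_\eps(t)\to\gamma_\eps(t)$ in $L^1(\Omega')$. By \eqref{eq:identityyni}, $\nabla u^m_\eps(t)\mathds{1}_{(\Gamma^m_\eps(t))^c}=\nabla u^m_\eps(t)-\gamma^m_\eps(t)$. All these functions are piecewise constant on the finitely many simplices meeting $\Omega'$, and they are uniformly bounded by \eqref{apriori bound}. Hence pointwise convergence on every simplex upgrades to $L^2$ convergence, and this case is done.

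\textbf{Step 2: $t\notin I_{\infty,\eps}$.} Then $t\notin\mathcal N_\eps$, so $\lambda_\eps$ is continuous at $t$. Each $\lambda^m_\eps$ takes values in a finite set $D_{\eps,h}$ independent of $m$, so the increasing limit $\lambda_\eps$ is a step function. Continuity at $t$ therefore makes $\lambda_\eps$ constant on a neighbourhood of $t$. Take the sequence $t_k\nearrow t$, $t_k\in I_{\infty,\eps}$, used to define $u_\eps(t)$. For fixed large $k$, both $\lambda^m_\eps(t_k)$ and $\lambda^m_\eps(t)$ lie in the finite set $D_{\eps,h}$ and converge to the same value. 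Hence $\lambda^m_\eps(t_k)=\lambda^m_\eps(t)$ for all $m\ge m(k)$, and since $\Gamma^m_\eps(t_k)\subset\Gamma^m_\eps(t)$, the neighbourhoods $U^\T_\eps(\Gamma^m_\eps(t_k))$ and $U^\T_\eps(\Gamma^m_\eps(t))$ coincide up to null sets.

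Write $e^m(s):=\nabla u^m_\eps(s)\mathds{1}_{(\Gamma^m_\eps(s))^c}$ and $J^m(s):=\|e^m(s)\|^2_{L^2(\Omega)}$. I will compare $J^m(t)$ and $J^m(t_k)$ in both directions.
\begin{itemize}
\item Upper bound: \eqref{eq:ineqtimeintn} between $t_k$ and $t$, together with equality of the surface terms, gives $J^m(t)\le J^m(t_k)+\omega(|t-t_k|)+e(m)$, where $\omega$ comes from $\partial_t\nabla g_h\in L^1(L^2)$ and the uniform energy bounds.
\item Lower bound: test the discrete minimality at the step $t^m_{k(m,t_k)}$ with the competitor $u=u^m_\eps(t)-g^m_h(t)+g^m_h(t_k)$ and $\gamma=\nabla u\,\mathds{1}_{\Gamma^m_\eps(t)}$. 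Its surface term is $\frac1{2\eps}\L^d(U^\T_\eps(\Gamma^m_\eps(t)))=\lambda^m_\eps(t_k)$, which is unchanged. This yields $J^m(t_k)\le J^m(t)+\omega(|t-t_k|)$.
\end{itemize}

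\textbf{Step 3: uniqueness of the strain.} Now use that $e^m(t)$ is the unique minimizer of the quadratic functional $w\mapsto\|\nabla w\,\mathds{1}_{(\Gamma^m_\eps(t))^c}\|^2$ over the affine space $\{w\in V_h(\Omega'):\,w=g^m_h(t)\text{ on }\Omega'\setminus\overline\Omega\}$. The shifted function $u^m_\eps(t_k)+g^m_h(t)-g^m_h(t_k)$ is admissible, and its strain on $(\Gamma^m_\eps(t))^c$ agrees with $e^m(t_k)$ up to $O(\|\nabla(g_h(t)-g_h(t_k))\|_{L^2})$. This uses $\Gamma^m_\eps(t_k)\subset\Gamma^m_\eps(t)$, plus the fact that on $\Gamma^m_\eps(t)\setminus\Gamma^m_\eps(t_k)$ the energy of $e^m(t_k)$ can only decrease under the restriction. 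For a minimizer $e$ of a quadratic form on an affine space, the identity $\|w-e\|^2=J(w)-J(e)$ holds, and combined with the two-sided bounds of Step 2 it gives
\[
\|e^m(t)-e^m(t_k)\mathds{1}_{(\Gamma^m_\eps(t))^c}\|_{L^2}^2\le\omega(|t-t_k|)+e(m).
\]
The remaining piece $e^m(t_k)\mathds{1}_{\Gamma^m_\eps(t)\setminus\Gamma^m_\eps(t_k)}$ is small by the same energy comparison. Letting first $m\to\infty$ (Step 1 at $t_k$) and then $k\to\infty$, we get that $e^m(t)$ is Cauchy, with limit equal to $\lim_k(\nabla u_\eps(t_k)-\gamma_\eps(t_k))$. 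Finally I identify this limit with $\nabla u_\eps(t)-\gamma_\eps(t)$ using the pointwise convergence of $u_\eps(t_k)$ and $\gamma_\eps(t_k)$ on simplices from the construction.

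\textbf{Main obstacle.} The delicate point is the two-sided comparison in Step 2. In particular, the added simplices $\Gamma^m_\eps(t)\setminus\Gamma^m_\eps(t_k)$ must not change the surface term, which is exactly what the finite-valuedness of $\lambda^m_\eps$ and the continuity of $\lambda_\eps$ at $t$ provide. I would also double-check the last identification step, since $\gamma^m\to0$ may happen on some simplices, as noted after Lemma~\ref{lem:stabilityjumpset}.
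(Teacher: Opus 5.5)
Your argument is correct, and its skeleton matches the paper's. The case $t\in I_{\infty,\eps}$ is immediate. For $t\notin I_{\infty,\eps}$ you approximate from below by $t_k$, prove $\lim_k\limsup_m\|e^m(t)-e^m(t_k)\|_{L^2}=0$ by an energy comparison plus quadratic orthogonality, and conclude with the three-term triangle inequality.

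Where you differ is the route to the key estimate. The paper introduces the auxiliary minimizer $w_m(s,t)$ of the Dirichlet energy on $\Omega\setminus\Gamma^m_\eps(t)$ with data $g^m_h(s)$ and tests stability at time $s$ with it. This gives
\[
\int_{\Omega\setminus\Gamma^m_\eps(t)}|\nabla u^m_\eps(t)-\nabla u^m_\eps(s)|^2\,{\rm d}x\le C\|\nabla g^m_h(t)-\nabla g^m_h(s)\|^2_{L^2(\Omega)}+C\big(\lambda^m_\eps(t)-\lambda^m_\eps(s)\big),
\]
after which the paper only needs continuity of $\lambda_\eps$ at $t$ in the iterated limit $\lim_{s\to t}\limsup_m$. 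You instead use the finite range $D_{\eps,h}$ of $\lambda^m_\eps$, which the paper mentions but deliberately does not use. This gives $\lambda^m_\eps(t_k)=\lambda^m_\eps(t)$ exactly for $m\ge m(k)$, and you then compare the two discrete minimizers directly via boundary-data shifts.

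Your version is slightly more elementary here, but it is tied to the finite-element structure. The paper's version is robust: the same computation is reused in Lemma \ref{lem:convergenceoutsideIinfinity}, where $\eps_n\to0$ and no finiteness is available. Your argument does not really need the trick either. Without it, your lower bound simply acquires the additive error $\lambda^m_\eps(t)-\lambda^m_\eps(t_k)$, which is exactly the paper's term.

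Three smaller remarks.

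First, Step 3 only uses your lower bound $J^m(t_k)\le J^m(t)+\omega$, together with minimality at $t$. The upper bound via \eqref{eq:ineqtimeintn}, and with it the $e(m)$ term, is superfluous.

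Second, your explicit control of the piece $e^m(t_k)\mathds{1}_{\Gamma^m_\eps(t)\setminus\Gamma^m_\eps(t_k)}$ is a real plus. The paper's final decomposition uses $\nabla u^m_\eps(t_k)\mathds{1}_{(\Gamma^m_\eps(t))^c}=\nabla u^m_\eps(t_k)-\gamma^m_\eps(t_k)$, but by \eqref{eq:identityyni} this holds only with $\Gamma^m_\eps(t_k)$ in place of $\Gamma^m_\eps(t)$. The paper passes from $\Omega\setminus\Gamma^m_\eps(s)$ to $\Omega\setminus\Gamma^m_\eps(t)$ and drops the term $\int_{\Gamma^m_\eps(t)\setminus\Gamma^m_\eps(s)}|\nabla u^m_\eps(s)|^2\,{\rm d}x$. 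Keeping it bounds the discrepancy by $\lambda^m_\eps(t)-\lambda^m_\eps(s)$, in the same way you handle it.

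Third, your worry about the final identification is unfounded. The pair $(u_\eps(t),\gamma_\eps(t))$ is defined as the limit of $(u_\eps(t_k),\gamma_\eps(t_k))$. Hence $\nabla u_\eps(t_k)-\gamma_\eps(t_k)\to\nabla u_\eps(t)-\gamma_\eps(t)$ by construction, in $L^2$ because these functions are piecewise constant and bounded. Since only this difference enters, the possibility that $\gamma^m_\eps\to0$ on some simplex plays no role.
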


 For $t\in I_{\infty,\eps} $ the proof follows directly by the convergence obtained below \eqref{iinftyeps}  and identity \eqref{eq:identityyni}. In the general case,   the proof follows a standard argument which we present in the appendix. \EEE  With the preceding results, we are now able to  obtain \EEE the minimality for the limit.

\begin{lemma}[Stability]
    \label{lem:stabdisc}
    For every $t\in (0, 1]$ and every $(\bar{u}, \bar{\gamma})\in V_h(\Omega')\times W_h(\Omega')$ such that $\bar{u}=g_h(t)$ and $\bar{\gamma}=0$ on $\Omega' \setminus \overline{\Omega}$, we have 
       \begin{align}
        \label{eq:mindisc}
        \E_\varepsilon(u_\varepsilon(t), \gamma_\varepsilon(t), \Gamma_\varepsilon(t))\leq \E_\varepsilon(\bar{u}, \bar{\gamma}, \Gamma_\varepsilon(t)). 
           \end{align}
    Additionally, for $t=0$,  for every $(\bar{u}, \bar{\gamma})\in V_h(\Omega')\times W_h(\Omega')$ with $\bar{u}=g_h(0)$ and $\bar{\gamma} = 0$ on $\Omega' \setminus \overline{\Omega}$   we have
    \begin{align}
        \label{eq:mindisczero}
        \E_\varepsilon(u_\varepsilon(0), \gamma_\varepsilon(0), \emptyset)\leq \E_\varepsilon(\bar{u}, \bar{\gamma}, \emptyset).
    \end{align}
\end{lemma}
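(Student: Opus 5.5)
The idea is to pass the discrete minimality \eqref{eq:minepsi+1} to the limit $m\to\infty$ at the discrete time $t^m_{k}$ with $k=k(m,t)$, so that $t^m_k\le t<t^m_{k+1}$.

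\textbf{Testing the discrete problem.} Given an admissible $(\bar u,\bar\gamma)$ at time $t$, we use $(\bar u+g^m_{h,k}-g_h(t),\bar\gamma)$ as competitor in the problem defining $(u^m_k,\gamma^m_k)$. Since the boundary datum changed, this competitor is admissible. Monotonicity of $U^\T_\eps$ gives $\Gamma^m_\eps(t^m_{k-1})\cup\{\bar\gamma\neq0\}\subset \Gamma^m_\eps(t)\cup\{\bar\gamma\neq0\}$. Combining this with \eqref{gammam} yields
\[
\E_\eps(u^m_k,\gamma^m_k,\Gamma^m_\eps(t))\le \int_\Omega|\nabla\bar u-\bar\gamma+\nabla(g^m_{h,k}-g_h(t))|^2\,\derx+\frac{1}{2\eps}\L^d\big(U^\T_\eps(\Gamma^m_\eps(t)\cup\{\bar\gamma\neq0\})\big).
\]
The perturbation $\nabla(g^m_h(t)-g_h(t))\to0$ in $L^2$, so the first term converges to $\int_\Omega|\nabla\bar u-\bar\gamma|^2\,\derx$.

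\textbf{Handling the crack term.} This is the main obstacle, because $\Gamma^m_\eps(t)$ may be strictly larger than $\Gamma_\eps(t)$ (Lemma \ref{lem:stabilityjumpset} only gives one inclusion). The remedy is to subtract the same term on both sides. By \eqref{eq:identityyni},
\[
\E_\eps(u^m_k,\gamma^m_k,\Gamma^m_\eps(t))=\int_\Omega|\nabla u^m_k\mathds 1_{(\Gamma^m_\eps(t))^c}|^2\,\derx+\frac1{2\eps}\L^d(U^\T_\eps(\Gamma^m_\eps(t))).
\]
Everything lives on finitely many simplices of $\mathcal{T}_h$, and there are finitely many possible sets, so along a further subsequence we may assume $\Gamma^m_\eps(t)=:\Lambda$ is constant for large $m$, with $\Gamma_\eps(t)\subset\Lambda$. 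Lemma \ref{lem:PointwiseConvergence} then gives
\[
\int_\Omega|\nabla u_\eps(t)-\gamma_\eps(t)|^2\,\derx+\frac1{2\eps}\L^d(U^\T_\eps(\Lambda))\le\int_\Omega|\nabla\bar u-\bar\gamma|^2\,\derx+\frac1{2\eps}\L^d(U^\T_\eps(\Lambda\cup\{\bar\gamma\ne0\})).
\]

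\textbf{Removing the extra simplices.} Since $\Lambda\supset\Gamma_\eps(t)$, we have
\[
\L^d(U^\T_\eps(\Lambda\cup A))-\L^d(U^\T_\eps(\Lambda))\le \L^d(U^\T_\eps(\Gamma_\eps(t)\cup A))-\L^d(U^\T_\eps(\Gamma_\eps(t)))
\]
for $A=\{\bar\gamma\neq0\}$. This follows from submodularity of $A\mapsto\L^d(U^\T_\eps(A))$, because $U^\T_\eps(A\cup B)=U^\T_\eps(A)\cup U^\T_\eps(B)$ and $\L^d$ of unions is submodular. Inserting it gives
\[
\int_\Omega|\nabla u_\eps(t)-\gamma_\eps(t)|^2\,\derx\le\int_\Omega|\nabla\bar u-\bar\gamma|^2\,\derx+\frac1{2\eps}\Big(\L^d(U^\T_\eps(\Gamma_\eps(t)\cup A))-\L^d(U^\T_\eps(\Gamma_\eps(t)))\Big).
\]
Finally we add $\frac1{2\eps}\L^d(U^\T_\eps(\Gamma_\eps(t)))$ to both sides. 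On the left, the resulting sum equals $\E_\eps(u_\eps(t),\gamma_\eps(t),\Gamma_\eps(t))$ because $\{\gamma_\eps(t)\ne0\}\subset\Gamma_\eps(t)$. This is \eqref{eq:mindisc}.

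\textbf{Additional points.} For $t\notin I_{\infty,\eps}$, $u_\eps(t)$ was defined as the limit along $t_j\nearrow t$. Here we would either use Lemma \ref{lem:PointwiseConvergence}, which identifies the strain directly for all $t$, or repeat the argument at the $t_j$ and pass to the limit using $\Gamma_\eps(t_j)\subset\Gamma_\eps(t)$ and the same submodularity step. For $t=0$, \eqref{eq:mindisczero} follows by passing to the limit in \eqref{eq:mineps0} with the same competitor. There $\Gamma^m_\eps(0)=\{\gamma^m_0\ne0\}$, the fixed minimizer is independent of $m$, and the cancellation above gives $\E_\eps(u_\eps(0),\gamma_\eps(0),\emptyset)\le\E_\eps(\bar u,\bar\gamma,\emptyset)$.
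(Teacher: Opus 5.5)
Your proof is correct and follows essentially the same route as the paper. You test the discrete minimality with $\bar u + g^m_h(t) - g_h(t)$, cancel the surface term of $\Gamma^m_\eps(t)$, and pass to the limit via Lemma~\ref{lem:PointwiseConvergence}. You then use $\Gamma_\eps(t)\subset\Gamma^m_\eps(t)$ from Lemma~\ref{lem:stabilityjumpset} to bound $\L^d\big(U^{\mathcal{T}}_\eps(\{\bar\gamma\neq0\})\setminus U^{\mathcal{T}}_\eps(\Gamma^m_\eps(t))\big)$ by the same quantity with $\Gamma_\eps(t)$; this is your ``submodularity'' step. The only cosmetic difference is that the paper takes a $\limsup$ of this difference instead of extracting a subsequence on which $\Gamma^m_\eps(t)$ is constant.
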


Here, we note that $ \E_\varepsilon(\cdot, \cdot, \emptyset)$ coincides with the energy defined in \eqref{eq:energydefinitionepsilon}.  

\begin{proof}
       We define $\bar{u}_m:= \bar{u}+g^m_h(t)-g_h(t)$. With the regularity properties of $g_h$, we obtain $\bar{u}_m\to \bar{u}$ in  $H^1(\Omega')$ \EEE for $m\to\infty$. Since $(  \bar{u}_m, \EEE \bar{\gamma} \EEE )$ is admissible, we get by the minimality in \eqref{eq:minepsi+1} \EEE  
    \begin{align*}
        \E_\varepsilon(u^m_\varepsilon(t), \gamma^m_\varepsilon(t), \Gamma^m_\varepsilon(t))\leq \E_\varepsilon(\bar{u}_m,    \bar{\gamma}, \EEE \Gamma^m_\varepsilon(t)).
    \end{align*}
    Then, subtracting the surface energy in \EEE \eqref{eq:energydefinitionepsilon-new}  and taking the limit in $m$\EEE, by \eqref{eq:identityyni} and Lemma \ref{lem:PointwiseConvergence}, 
    this shows
    $$
 \int_\Omega |\nabla u_\eps(t)-\gamma_\eps(t)|^2 \, {\rm d}x \le   \int_\Omega |\nabla \bar{u}-\bar{\gamma}|^2 \, {\rm d}x  + \limsup_{m \to \infty }\frac{1}{2\varepsilon}\L^d\left(U^{\mathcal{T}}_\varepsilon(\{\bar{\gamma}\neq 0\} )\setminus U^\T_\varepsilon(\Gamma^m_\varepsilon(t))\right).    
     $$
 Using  Lemma \ref{lem:stabilityjumpset}  we find
\begin{align*}
 \int_\Omega |\nabla u_\eps(t)-\gamma_\eps(t)|^2 \, {\rm d}x  \le   \int_\Omega |\nabla \bar{u}(t)-\bar{\gamma}|^2 \, {\rm d}x  +  \frac{1}{2\varepsilon}\L^d\left(U^{\mathcal{T}}_\varepsilon(\{\bar{\gamma}\neq 0\}) \setminus U^\T_\varepsilon(\Gamma_\varepsilon(t) ) \right).    
\end{align*}
Adding \EEE   $\frac{1}{2\varepsilon}\L^d(U^{\mathcal{T}}_\varepsilon(\Gamma_\varepsilon(t) ))$ on both sides we obtain \EEE  (\ref{eq:mindisc}).  Property \eqref{eq:mindisczero} is easier and follows directly from minimality (see \eqref{eq:mineps0}) and   Lemma~\ref{lem:PointwiseConvergence}. 
\end{proof}
With this minimality,  for all $t \in [0,1]$ we obtain
\begin{align}\label{limitinggamma}
 \gamma_\varepsilon(t)=\nabla u_\varepsilon(t)\mathds{1}_{\bruchdisc(t)}.
\end{align}
Indeed,   from the definition of $\Gamma_\eps$ and the definition of $\gamma_\eps(s)$ for $s \notin I_{\infty,\eps}$ we get  $\lbrace \gamma_\eps(t) \neq 0 \rbrace \subset \Gamma_\eps(t)$. This along with    minimality yields \eqref{limitinggamma} \EEE with the same argument as in (\ref{eq:identityyni}).  Therefore, we can also write 
$\intnormal{u_\varepsilon(t)-\gamma_\varepsilon(t)}=\int_{\Omega\setminus\bruchdisc(t)}|\nabla u_\varepsilon(t)|^2\,\derx$.  Based on Lemmas \ref{lem:stabilityjumpset}--\ref{lem:stabdisc} we can now  derive an  \EEE energy balance.  
\begin{lemma}[Energy balance]
    \label{lem:bothinequalitesdisc}
    The limit $(u_\varepsilon(t), \gamma_\varepsilon(t))$ satisfies the   energy equality
    \begin{align}
        \label{eq:energyequalitydisc}
        \energymain{t}=\energymain{0}+2\int_0^t\int_\Omega (\nabla  u_\varepsilon(s)-\gamma_\varepsilon(s)) \cdot \partial_t\nabla g_h(s)\,\derx\,\ders.
    \end{align}
\end{lemma}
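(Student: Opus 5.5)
We prove the energy equality by establishing the two inequalities ``$\le$'' and ``$\ge$'' separately, following the classical scheme of \cite{FrancfortLarsen2003}.

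\emph{Upper bound.} Fix $t\in[0,1]$ and pass to the limit $m\to\infty$ in the discrete estimate \eqref{eq:ineqtimeintn} with $t_1=0$, $t_2=t$.
\begin{itemize}
\item On the left-hand side, write the energy as $\int_{\Omega}|\nabla u^m_\eps(t)\mathds{1}_{(\Gamma^m_\eps(t))^c}|^2\,\derx + \lambda^m_\eps(t)$, which is possible by \eqref{eq:identityyni}. By Lemma \ref{lem:PointwiseConvergence}, the bulk part converges to $\int_\Omega|\nabla u_\eps(t)-\gamma_\eps(t)|^2\,\derx$.
\item By Lemma \ref{lem:stabilityjumpset}, $\Gamma_\eps(t)\subset\Gamma^m_\eps(t)$ for $m$ large. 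Hence $\liminf_m\lambda^m_\eps(t)\ge \frac1{2\eps}\L^d(U^\T_\eps(\Gamma_\eps(t)))$.
\item On the right-hand side, the $e(m)$-term vanishes. The work term converges by dominated convergence in $s$: for each $s$ the integrand converges by Lemma \ref{lem:PointwiseConvergence}, and it is bounded by $C\|\partial_t\nabla g_h(s)\|_{L^2}$ because the strains are bounded in $L^2$ uniformly in $m$ and $s$, by \eqref{apriori bound} or by the energy estimate itself.
\item The initial energy converges, since $(u^m_0,\gamma^m_0)$ is independent of $m$, or by Lemma \ref{lem:PointwiseConvergence} at $t=0$.
\end{itemize}
This yields ``$\le$'' in \eqref{eq:energyequalitydisc}.

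\emph{Lower bound.} We use global stability (Lemma \ref{lem:stabdisc}) along a partition, as in \cite{FrancfortLarsen2003}. Fix a partition $0=s_0<s_1<\dots<s_k=t$. For each $j$, test the stability at time $s_j$ with the competitor $(\bar u,\bar\gamma)=(u_\eps(s_{j+1})-g_h(s_{j+1})+g_h(s_j),\gamma_\eps(s_{j+1}))$; this competitor is admissible at time $s_j$. Since $\Gamma_\eps(s_j)\subset\Gamma_\eps(s_{j+1})$, we have
\[
\E_\eps(\bar u,\bar\gamma,\Gamma_\eps(s_j))\le \int_\Omega|\nabla u_\eps(s_{j+1})-\gamma_\eps(s_{j+1})-\nabla(g_h(s_{j+1})-g_h(s_j))|^2\,\derx + \frac1{2\eps}\L^d(U^\T_\eps(\Gamma_\eps(s_{j+1}))).
\]
Here we use $\{\gamma_\eps(s_{j+1})\ne0\}\subset\Gamma_\eps(s_{j+1})$. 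Expanding the square gives
\[
\E_\eps(s_j)\le \E_\eps(s_{j+1}) - 2\int_{s_j}^{s_{j+1}}\int_\Omega(\nabla u_\eps(s_{j+1})-\gamma_\eps(s_{j+1}))\cdot\partial_t\nabla g_h(s)\,\derx\,\ders + \Big(\int_{s_j}^{s_{j+1}}\|\partial_t\nabla g_h\|_{L^2}\,\ders\Big)^2,
\]
where $\E_\eps(s)$ is shorthand for $\E_\eps(u_\eps(s),\gamma_\eps(s),\Gamma_\eps(s))$. Summing over $j$ produces a Riemann-type sum $\sum_j\int_{s_j}^{s_{j+1}}\langle\sigma(s_{j+1}),\partial_t\nabla g_h(s)\rangle$, with $\sigma(s):=\nabla u_\eps(s)-\gamma_\eps(s)$, plus an error that vanishes as the mesh of the partition goes to zero.

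\emph{Main obstacle.} The main difficulty is to show that this Riemann sum converges to $\int_0^t\int_\Omega\sigma(s)\cdot\partial_t\nabla g_h(s)\,\derx\,\ders$ along a suitable sequence of partitions. The map $s\mapsto\sigma(s)$ is only bounded and measurable, and not continuous. We first check measurability: $\sigma$ is the pointwise $L^2$-limit of the piecewise constant maps $\nabla u^m_\eps\mathds{1}_{(\Gamma^m_\eps)^c}$. Then we apply the standard lemma on Riemann sums for Bochner integrable functions (see \cite[Lemma 4.12]{dMasoFrancfortToader2005}): for $\sigma\cdot\partial_t\nabla g_h\in L^1(0,t;L^1)$, there exist partitions with mesh tending to zero such that the Riemann sums converge.

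Alternatively, since $\Gamma_\eps(\cdot)$ takes only finitely many values, one can use that $\sigma$ is piecewise given by the minimizer of a strictly convex quadratic problem with fixed crack set and continuously varying data. This makes $\sigma$ continuous off finitely many times, and the Riemann sums then converge directly. Combining the two inequalities gives \eqref{eq:energyequalitydisc}, and absolute continuity follows from the integral representation.
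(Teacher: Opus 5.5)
Your proposal is correct and follows the paper's proof essentially step by step. The ``$\le$'' inequality comes from passing to the limit in \eqref{eq:ineqtimeintn} via Lemmas \ref{lem:stabilityjumpset} and \ref{lem:PointwiseConvergence} and dominated convergence. The ``$\ge$'' inequality comes from testing Lemma \ref{lem:stabdisc} at $s_j$ with $(u_\eps(s_{j+1})-g_h(s_{j+1})+g_h(s_j),\gamma_\eps(s_{j+1}))$ along partitions from \cite[Lemma 4.12]{dMasoFrancfortToader2005}.

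One detail needs tightening. Your sum pairs $\nabla u_\eps(s_{j+1})-\gamma_\eps(s_{j+1})$ with $\partial_t\nabla g_h(s)$, not with $\partial_t\nabla g_h(s_{j+1})$. So the partitions must also make the Riemann sums of $\partial_t\nabla g_h$ converge, which the paper ensures via \cite[Remark 4.13]{dMasoFrancfortToader2005}. Alternatively, your second argument avoids this altogether and is valid and more elementary: $\Gamma_\eps(\cdot)$ jumps only finitely often, and between jumps the strain depends continuously on $g_h$.
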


The proof is conceptually very similar to  \cite[Section 3.2]{FrancfortLarsen2003}. We only present one inequality here and defer the proof of the other inequality \EEE to  the appendix. We use (\ref{eq:ineqtimeintn}) with the time steps $t^m_{i_1} = 0$ and $t^m_{i_2}\in I_m$ such that $t^m_{i_2}\leq t<t^m_{i_2+1}$. \EEE  Then, we get
        \begin{align*}
            \E_\varepsilon(u^m_\varepsilon(t), \gamma^m_\varepsilon(t), \Gamma^m_\varepsilon(t))\leq \,&\E_\varepsilon(u^m_\varepsilon(0), \gamma^m_\varepsilon(0), \Gamma^m_\varepsilon(0))+ 2 \int_0^{t_{i_2}^m} \EEE  \int_\Omega(\nabla u^m_\varepsilon(s)-\gamma^m_\varepsilon(s)) \cdot \partial_t\nabla \EEE g_h(s)\,\derx\,\ders\\
        &+\, e\left(m\right) \EEE \int_0^t\| \partial_t \nabla  g_h(s)\|_{L^2(\Omega)}\,\ders.
        \end{align*}
Taking the limit $m\to\infty$ we find
  \begin{align}\label{firstinequ0}   \liminf_{m \to \infty}   \E_\varepsilon(u^m_\varepsilon(t), \gamma^m_\varepsilon(t), \Gamma^m_\varepsilon(t))       \le \energymain{0}+2\int_0^t\int_\Omega (\nabla  u_\varepsilon(s)-\gamma_\varepsilon(s)) \cdot \partial_t\nabla g_h(s)\,\derx\,\ders. 
  \end{align}
Indeed, we can pass to the limit in the integral  on the right-hand side    by    Lemma \ref{lem:PointwiseConvergence},  \eqref{eq:identityyni}, \eqref{apriori bound}, and  dominated convergence. Moreover, we have $\E_\varepsilon(u^m_\varepsilon(0), \gamma^m_\varepsilon(0), \Gamma_\eps^m(0)) = \E_\varepsilon(u_\varepsilon(0), \gamma_\varepsilon(0), \Gamma_\eps(0))$  for all $m \in \N$  as the solution of \eqref{eq:mineps0} is independent of $m$.  For the left-hand side of the previous equation, we use  Lemma \ref{lem:PointwiseConvergence},  \eqref{eq:identityyni}, and Lemma \ref{lem:stabilityjumpset} to get  
 \begin{align}\label{gammeps2}\liminf_{m \to \infty}   \E_\varepsilon(u^m_\varepsilon(t), \gamma^m_\varepsilon(t), \Gamma^m_\varepsilon(t))  & \ge \liminf_{m \to \infty} \Big(  \int_\Omega |\nabla u_\eps^m(t)-\gamma_\eps^m(t)\EEE|^2 \, {\rm d}x  + \frac{1}{2\varepsilon}\L^d\big(U^{\mathcal{T}}_\varepsilon(\Gamma^m_\varepsilon(t) )\big) \Big) \notag \\ & \ge   \int_\Omega |\nabla u_\eps(t)-\gamma_\eps(t)\EEE|^2 \, {\rm d}x  \,\derx+ \frac{1}{2\varepsilon}\L^d\big(U^{\mathcal{T}}_\varepsilon(\Gamma_\varepsilon(t) )\big). 
\end{align}
  In view of \eqref{eq:energydefinitionepsilon-new} and the fact that $\lbrace \gamma_\eps(t) \neq 0 \rbrace \subset \Gamma_\eps(t)$ (see \eqref{limitinggamma}), \EEE  this shows
  \begin{align}\label{firstinequ} 
  \liminf_{m \to \infty}   \E_\varepsilon(u^m_\varepsilon(t), \gamma^m_\varepsilon(t), \Gamma^m_\varepsilon(t))  \ge  \energymain{t}.
  \end{align}
This shows the first  inequality in \eqref{eq:energyequalitydisc}. \EEE The second  inequality in \eqref{eq:energyequalitydisc} will be discussed in the appendix.    \EEE With the preliminary results, we are now ready to  prove Theorem \ref{theorem:approx}. 
\begin{proof}[Proof of Theorem \ref{theorem:approx}]
 First, \EEE  conditions (a) and (c) follow from Lemma  \ref{lem:stabdisc}  and (b) is a direct consequence of the definition of $\Gamma_\eps(t)$. The energy balance (d) is stated in \EEE  Lemma \ref{lem:bothinequalitesdisc}. The fact that $u_\varepsilon(t)=g_h(t)$, $\gamma_\varepsilon(t)=0$ on $\Omega' \setminus \overline{\Omega}$ for all $t\in[0 ,1]$ follows directly from the construction of the limits. Moreover, \eqref{limitinggamma} implies $\lbrace \gamma_\eps(t) \neq 0 \rbrace \subset \Gamma_\eps(t)$, which shows $ (u_\varepsilon(t), \gamma_\varepsilon(t), \Gamma_\eps(t)) \in AD_\eps(g_h(t))$ for all $t\in[0 ,1]$. \EEE Combining \eqref{firstinequ0}--\eqref{firstinequ} and the energy balance \eqref{eq:energyequalitydisc}, we obtain convergence of energies at all times $t \in [0,1]$, i.e., the first item in  \eqref{energyconvergenceeps} holds.  In view of Lemma \ref{lem:PointwiseConvergence}, to conclude \eqref{energyconvergenceeps}, it suffices to show  $u^m_\eps(t) \to u_\eps(t)$  in $L^1(G_\eps(t))$. 


To this end,  we start by observing that both the elastic and the surface energy are lower semicontinuous, see the \EEE argument in  \eqref{gammeps2}.   This indeed shows 
\begin{align}\label{conviiiii}
\lambda_\eps(t) = \lim_{m \to \infty}\L^d(U^{\mathcal{T}}_\varepsilon(\Gamma^m_\varepsilon(t) )) = \L^d(U^{\mathcal{T}}_\varepsilon(\Gamma_\varepsilon(t) )) 
\end{align}
for all $t \in [0,1]$. From this we deduce that $U^\T_\varepsilon(\Gamma^{m}_\varepsilon(t))$ is constant equal to  $U^\T_\varepsilon(\bruchdisc(t))$  for  $m$ large enough depending on $t \in [0,1]$. Indeed,  by Lemma \ref{lem:stabilityjumpset} we have  $U^\T_\varepsilon(\bruchdisc(t)) \subset U^\T_\varepsilon(\bruchdiscn(t))$ for $m$ large enough.   This along with \eqref{conviiiii} and   the discrete nature of the problem induced by  the triangulation $\mathcal{T}_h$ shows the property. \EEE Then, by Lemma \ref{lem:PointwiseConvergence} and \eqref{limitinggamma} we get $\nabla u^m_\varepsilon(t)\to \nabla u_\varepsilon(t)$  in $L^2(\Omega'\setminus  U^\T_\varepsilon(\bruchdisc(t)))$.

Recall that  $G_\eps(t) \subset \Omega'$   denotes the largest open set with $\Omega' \setminus \overline{\Omega} \subset G_\eps(t)$ and    $\Gamma_\varepsilon(t)\cap  U^\T_\varepsilon(\bruchdisc(t)) = \emptyset$.  \EEE Now, by the fundamental theorem of calculus applied successively on the simplices of $\mathcal{T}_h$ along with the fact that $u^m_\eps(t) - u_\varepsilon(t) = g^m_h(t)-g_h(t) \to 0$ on $\Omega' \setminus \overline{\Omega}$ \EEE as  $m \to \infty$, we also get  $  u^m_\varepsilon(t)\to   u_\varepsilon(t)$  in $L^1(G_\eps(t))$. This concludes the proof.   
\end{proof}

\section{Passage to quasi-static crack growth}
\label{chapter:Limitepsilontozero}

%
%
%
%
%
%
%
%
%


This section is devoted to the proof of Theorem \ref{theorem:griffithmin}. We start by stating an energy bound for $\eps>0$. Then, we proceed with compactness and stability properties on a  countable subset of times. \EEE Afterwards, we extend the limiting evolution to all times and give the proof of the main result.

 For every $\varepsilon>0$, we denote the triple \EEE given by Theorem \ref{theorem:approx}  by $t \mapsto (u_\varepsilon(t), \gamma_\varepsilon(t),   \Gamma_\eps(t))$. \EEE By a truncation argument, the minimality properties (i) and (iii) along with $g \in L^{\infty}([0,1] \times \Omega') $ imply that   
\begin{align*}
    \|u_\varepsilon(t)\|_{L^\infty(\Omega')}\leq C
\end{align*}
for some $C>0$ independent of $\varepsilon$ and $t$.  Recalling the notation in \eqref{eq:energydefinitionepsilon-new}, we obtain the following  \EEE uniform bound on the energy.  
\begin{lemma}[Energy bound]
    \label{lem:energybound}
    There exists a constant $C_1\geq 0$ depending only on $g$ such that for all $t\in [0, 1]$ and $\varepsilon >0$ \EEE it holds that
    \begin{align}
        \label{eq:energybound}
        \E_\varepsilon(u_\varepsilon(t), \gamma_\varepsilon(t), \Gamma_\varepsilon(t))+\|u_{\varepsilon}(t)\|_{L^\infty(\Omega')}\leq C_1.
    \end{align}
  \end{lemma}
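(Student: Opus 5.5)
The $L^\infty$ part of \eqref{eq:energybound} is already available from the truncation argument stated just before the lemma, so it suffices to bound the energy uniformly. I will do this through the energy balance \eqref{eq:3.4} of Theorem~\ref{theorem:approx}(d), so the task splits into two uniform estimates: one for the initial energy $\E_\varepsilon(u_\varepsilon(0),\gamma_\varepsilon(0),\emptyset)$ and one for the work term.

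\textbf{Initial energy.} By the initial condition (a) I can test with the competitor $(\bar u,\bar\gamma)=(g_h(0),0)$. This gives
\[
\E_\varepsilon(u_\varepsilon(0),\gamma_\varepsilon(0),\emptyset)\le \int_\Omega |\nabla g_h(0)|^2\,\derx\le C\|g(0)\|_{W^{1,\infty}}^2 .
\]
The last inequality uses the standard stability of piecewise affine interpolation, $\|\nabla g_h\|_{L^\infty}\le C\|\nabla g\|_{L^\infty}$, which holds because each simplex gradient is an average of directional difference quotients of $g$. Some care is needed here since shape-regularity of $\mathcal{T}_h$ is not stated explicitly. To avoid relying on it, I would instead use the $W^{2,\infty}$ bound $\|\nabla g_h-\nabla g\|_{L^\infty}\le Ch\|g\|_{W^{2,\infty}}$, which is valid under the paper's standing interpolation assumptions. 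The same estimate applies to $\partial_t g_h=(\partial_t g)_h$, giving $\|\partial_t\nabla g_h(s)\|_{L^2(\Omega)}\le C\|\partial_t g(s)\|_{W^{2,\infty}}$, and this is integrable in $s$ because $g\in W^{1,1}([0,1];W^{2,\infty})$.

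\textbf{Work term and Gronwall.} Set $f(t):=\E_\varepsilon(u_\varepsilon(t),\gamma_\varepsilon(t),\Gamma_\varepsilon(t))$. Since $\|\nabla u_\varepsilon(s)-\gamma_\varepsilon(s)\|_{L^2(\Omega)}\le f(s)^{1/2}$, Cauchy–Schwarz in the work term of \eqref{eq:3.4} yields
\[
f(t)\le C_0+2\int_0^t f(s)^{1/2}\,a(s)\,\ders, \qquad a(s):=\|\partial_t\nabla g_h(s)\|_{L^2(\Omega)} .
\]
I then bound $f^{1/2}\le 1+f$ and apply Gronwall's lemma in integral form, which is legitimate because $f$ is absolutely continuous and $a\in L^1$ with norm bounded independently of $\varepsilon$. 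This gives $f(t)\le (C_0+2\|a\|_{L^1})\exp(2\|a\|_{L^1})$. Alternatively, setting $M:=\sup_t f(t)$, one has $M\le C_0+2M^{1/2}\|a\|_{L^1}$, from which $M\le C$ follows directly.

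The main obstacle is the $\varepsilon$-independent bound on the interpolants, $\sup_h\|\partial_t\nabla g_h\|_{L^1(0,1;L^2(\Omega))}<\infty$, since $h=h(\varepsilon)$ and the triangulation changes with $\varepsilon$. I would obtain it from standard interpolation estimates for $W^{2,\infty}$ functions, which I take to be part of the paper's standing assumptions on $(\mathcal{T}_h)_h$. The remaining steps are routine.
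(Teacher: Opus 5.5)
Your proof is correct, but it handles the work term differently from the paper.

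\textbf{Your route.} You bound $\|\nabla u_\varepsilon(s)-\gamma_\varepsilon(s)\|_{L^2(\Omega)}$ by $f(s)^{1/2}$, where $f(s)$ is the total energy. You then close the estimate by absorption: either Gronwall, or the inequality $M\le C_0+2M^{1/2}\|a\|_{L^1}$.

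\textbf{The paper's route.} The paper instead applies global stability (c) at each time $s$ with the competitor $(g_h(s),0)$. The surface term $\frac{1}{2\varepsilon}\L^d(U^{\T}_\varepsilon(\Gamma_\varepsilon(s)))$ appears on both sides and cancels. This gives directly $\int_\Omega|\nabla u_\varepsilon(s)-\gamma_\varepsilon(s)|^2\,\derx\le\int_\Omega|\nabla g_h(s)|^2\,\derx\le C\|g(s)\|^2_{W^{2,\infty}}$. Hence the work term is bounded by $C\sup_s\|g(s)\|_{W^{2,\infty}}\int_0^1\|\partial_t\nabla g_h(s)\|_{L^2(\Omega)}\,\ders$, with no self-referential step.

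\textbf{Comparison.} The paper's argument gives a pointwise-in-time bound on the elastic energy straight from unilateral minimality, which is a useful estimate in its own right. Yours needs only the energy balance (indeed only the inequality ``$\le$'') together with the initial minimality. It would therefore also cover evolutions that satisfy an energy inequality but no stability at positive times. The Gronwall version costs an exponential factor in $g$, which your sup-argument avoids.

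\textbf{A correction on interpolation.} The $O(h)$ gradient-error estimate does not avoid mesh regularity. Both it and the stability bound $\|\nabla g_h\|_{L^\infty}\le C\|\nabla g\|_{L^\infty}$ need a non-degeneracy condition on $\mathcal{T}_h$, such as a maximum-angle condition. For flat simplices, $\nabla g_h$ is obtained by inverting an ill-conditioned matrix of edge vectors, so it is not an average of difference quotients. The paper tacitly uses the same bound $\int_\Omega|\nabla g_h|^2\,\derx\le C\|g\|^2_{W^{2,\infty}}$. This is therefore a shared implicit assumption, not a gap specific to your argument.
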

\begin{proof}
Given the energy balance stated in  Lemma \ref{lem:bothinequalitesdisc}, it suffices to estimate   $\E_\varepsilon(u_\varepsilon(0), \gamma_\varepsilon(0), \Gamma_\varepsilon(0))$ and $2\int_0^t\int_\Omega (\nabla u_\varepsilon(t)-\gamma_ \varepsilon(t)) \cdot \partial_t\nabla g_h(s)\,\derx\,\ders$.      First, due to minimality (see (i) in Theorem \ref{theorem:approx}), we have $\E_\varepsilon(u_\varepsilon(0), \gamma_\varepsilon(0), \Gamma_\varepsilon(0))\leq \intnormal{g_h(0)}\leq C\|g(0)\|^2_{W^{2,\infty}(\Omega')}$.      For the second term, for every $0\leq s\leq 1$, we can estimate 
    \begin{align} \label{5y5}
        \left|\int_{ \Omega}(\nabla u_{\varepsilon}(s)-\gamma_\varepsilon(s))\cdot \partial_t\nabla  g_h(s)\,\derx\right|
        \leq\,&
        \|\nabla u_{\varepsilon}(s)-\gamma_\varepsilon(s)\|_{L^2( \Omega)}
        \|\partial_t\nabla  g_h(s)\|_{L^2(\Omega)}
    \end{align} 
by Hölder's inequality. Using the admissible competitor $(g_h(s),0)$ in the minimality property (iii) of Theorem \ref{theorem:approx}, we obtain  the inequality 
    \begin{align*}
        \energieeps{s}
        \leq \,&\intnormal{g_h(s)}+\bruchmengedische{s},
    \end{align*}
 Subtracting $\bruchmengedische{s}$ then \EEE yields  
    \begin{align*}
        \intnormal{u_\varepsilon(s)-\gamma_\varepsilon(s)}\leq \intnormal{g_h(s)}\leq C\|g(s)\|^2_{W^{2,\infty}(\Omega')}.
    \end{align*}
Inserting this into \eqref{5y5}, the proof is concluded. \EEE
\end{proof}

 Next, we address compactness of the pairs $(u_\eps(t),\gamma_{\eps}(t))$. We will employ multiple times the   compactness result in $SBV$, due to {\sc Ambrosio} \cite[Theorem 4.8]{AmbrosioFuscoPallara2000} which we recall in the appendix, see Theorem \ref{theorem:sbvcompactness}.   In particular, the convergence in \eqref{eq:sbvcompactnessresult} will be called \emph{$SBV^2$-convergence}.  \EEE

To apply Theorem \ref{theorem:sbvcompactness},  we need a bound on $\nabla  u_\varepsilon(t)$. This, however, is not available due to the presence of the eigenstrain $\gamma_\eps(t)$ in the elastic energy. As a workaround, following the ideas in \cite{SchmidtFraternaliOrtiz2009},  we will cut off the function approximately at the places where $\gamma_\eps(t)$ has its support. A first attempt could  be \EEE to simply cut off at either  $\Gamma_\varepsilon(t)$ or at the neighborhood $U^{\mathcal{T}}_\varepsilon(\Gamma_\varepsilon(t))$, i.e., we could consider $u_\eps(t) \mathds{1}_{\Omega' \setminus \Gamma\en(t)}$  or \EEE   $u_\eps(t) \mathds{1}_{\Omega' \setminus U^{\mathcal{T}}_\varepsilon(\Gamma_\varepsilon(t))}$. However, to apply Ambrosio's compactness result, we need to control the jump set, and bounds on $\partial \Gamma\en(t)$ or $\partial U^{\mathcal{T}}_\varepsilon(\Gamma_\varepsilon(t))$ are not immediately available. Therefore, we need to work with a related, but geometrically simpler object.

For that matter, using the grid $\frac{\varepsilon}{\sqrt{d}}\Z^d$  we cover \EEE the reference configuration $\Omega'$  with  \EEE sets of the form $ Q_\varepsilon^y\EEE =\frac{\varepsilon}{\sqrt{d}}(y+(0, 1)^d), y\in \Z^d$. For any $t \in [0,1]$, we define $\mathcal{Q}_\varepsilon(t):= \{Q\in Q_\varepsilon^y\EEE\colon \,  \bruchdisc(t)\cap Q \neq \emptyset \text{ or } Q\not\subset \Omega'\}$ as the collection of cubes that intersect the `broken simplices' or the boundary. Let \EEE   $Q_\varepsilon(t):= \bigcup_{Q\in \mathcal{Q}_\varepsilon(t)}Q$. Note that by construction we have $\bruchdisc(t)\subset Q_\varepsilon(t)\subset  {U}^{\T}_\eps(\Gamma_\eps(t) \cup \partial \Omega') \EEE $ and  $\gamma_\varepsilon(\tau)\equiv0$ in $(Q_\varepsilon(t))^c$ for every $0\leq \tau\leq t$, see \eqref{limitinggamma}. \EEE

Moreover, we get  $\# \mathcal{Q}_\varepsilon(t)\leq \frac{ \mathcal{L}^d \EEE (\bruchdische{t})}{(\varepsilon/\sqrt{d})^d}   +  \frac{C_{\Omega'}}{\varepsilon^{d-1}}   \leq \frac{2 C_1 d^\frac{d}{2}}{G\varepsilon^{d-1}} +  \frac{C_{\Omega'}}{\varepsilon^{d-1}}$ due to the control on $\bruchdische{t}$ given by \eqref{eq:energybound} and the fact that $\Omega'$ is a Lipschitz set. We \EEE can then estimate 
\begin{align}
    \label{eq:jumpsetestimate}
    \mathcal{H}^{d-1}(\partial Q_\varepsilon(t))\leq \frac{4C_1 d \EEE \sqrt{d}}{G} + 2 d \EEE C_{\Omega'},
\end{align} 
which is independent of $h, \varepsilon$ and $t$.  For later purposes, let us remark that the energy bound in Lemma \ref{lem:energybound} and the control on $\# \mathcal{Q}_\varepsilon(t)$ obtained above imply   
\begin{align}
    \label{eq:triangleentozero}
    \L^d\big(U^{\mathcal{T}}_{\varepsilon}(\Gamma_\eps(t))\big)\to 0, \quad \L^d(Q_\eps(t))\to 0 \quad  \text{as $\eps \to 0$ \  for every $t \in [0,1]$}.
\end{align}
Let us now consider an \EEE arbitrary sequence $\varepsilon_n\to 0$ with corresponding sequence $h_n := h(\eps_n)\to 0$. 
From now on, \EEE with abuse of notation, we will write $u_n(t):= u\en(t)$, $\gamma_n(t):= \gamma\en(t)$, $Q_n(t):= Q\en(t)$, and $\Gamma_n(t):= \Gamma\en(t)$. We also use  $U_n(\cdot):=U\en(\cdot)$,  $U^{\mathcal{T}}_n(\cdot)=U^{\mathcal{T}}_{ \eps_n}(\cdot)$,   $\mathcal{E}_n(\cdot) := \mathcal{E}_{\eps_n}(\cdot)$,  and $g_n:=g_{h_n}$. \EEE

 By means of Helly's theorem  and Theorem \ref{theorem:approx}(b), \EEE  up to passing to a subsequence (not relabeled), we can   assume \EEE that
\begin{align}
    \label{eq:lambdaepsilondefinition}
    \lambda_n(t):= \frac{1}{2\varepsilon_n}\L^d\left(U^{\mathcal{T}}_n(\Gamma_n(t))\right)
\end{align}
 converges to some increasing limiting function $\lambda_0$ for all $t\in [0,1]$. We denote the discontinuity points of $\lambda_0$ by $\mathcal{N}_0$ and note that $\mathcal{N}_0$ is at most countable. For brevity, we define $I_{\infty,0} :=  I_\infty  \cup \mathcal{N}_0$. \EEE

 With these preparations, we are now able to obtain a \EEE compactness result for the \EEE \emph{modifications }
\begin{align}\label{zn}
z_n(t):= u_n(t)\mathds{1}_{(Q_n(t))^c}, \quad t \in [0,1],
\end{align}
when restricting to times in $I_{\infty,0}$. \EEE
\begin{lemma}[Compactness on $I_{\infty,0}$]
   \label{lem:truncationIinfinity}
    There exists a subsequence of $(\varepsilon_n)_n$ (not relabeled) such that for all $t\in I_{\infty,0}$ there exists $u(t)\in SBV^2( \Omega')\EEE$, satisfying $u(t)= g(t)$ on $\Omega' \setminus \overline{\Omega}$, with 
    \begin{align}
        \label{eq:truncationIinfinityconvergence}
       u_n(t)  \to u(t) \quad\emph{in }L^1(\Omega'),  \quad \quad \quad      
       z_n(t)   \to u(t) \quad\emph{in }SBV^2(\Omega').
    \end{align}
    In particular, for all $t\in I_{\infty,0}$, we have
    \begin{align}
        \label{eq:truncationIinfinitybound}
        \int_{\Omega}|\nabla u(t)|^2\,\derx+ G \EEE \mathcal{H}^{d-1}(J_{u(t)})+\|u(t)\|_{L^\infty(\Omega')}\leq C_1.
    \end{align}
\end{lemma}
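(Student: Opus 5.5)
The plan is to apply Ambrosio's $SBV$ compactness theorem (Theorem \ref{theorem:sbvcompactness}) to the modifications $z_n(t)$ for each fixed $t\in I_{\infty,0}$. Since $I_{\infty,0}$ is countable, a diagonal argument then yields one subsequence that works for all such times.

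\textbf{Step 1: $z_n(t)\in SBV^2(\Omega')$ with uniform bounds.} Since $u_n(t)\in V_h(\Omega')$ is Lipschitz and $Q_n(t)$ is a finite union of cubes, $z_n(t)=u_n(t)\mathds{1}_{(Q_n(t))^c}$ belongs to $SBV(\Omega')$. Moreover, $\nabla z_n(t)=\nabla u_n(t)\mathds{1}_{(Q_n(t))^c}$ and $J_{z_n(t)}\subset\partial Q_n(t)$. Two observations give the bounds:
\begin{itemize}
\item By construction, $\gamma_n(t)=0$ outside $\Gamma_n(t)\subset Q_n(t)$ (see \eqref{limitinggamma}). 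Hence $\nabla z_n(t)=(\nabla u_n(t)-\gamma_n(t))\mathds{1}_{(Q_n(t))^c}$ on $\Omega$.
\item On $\Omega'\setminus\overline{\Omega}$ the function equals $\nabla g_n(t)$, which is bounded in $L^2$ uniformly in $n$.
\end{itemize}
Lemma \ref{lem:energybound} bounds the elastic part, \eqref{eq:jumpsetestimate} bounds $\mathcal{H}^{d-1}(J_{z_n(t)})$, and $\|z_n(t)\|_{L^\infty}\le\|u_n(t)\|_{L^\infty}\le C_1$. Thus all hypotheses of Theorem \ref{theorem:sbvcompactness} hold uniformly in $n$.

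\textbf{Step 2: Extraction and the two convergences.} Theorem \ref{theorem:sbvcompactness} gives a subsequence and a limit $u(t)\in SBV^2(\Omega')$ with $z_n(t)\to u(t)$ in the $SBV^2$ sense (i.e.\ $L^1$, weak $L^2$ of gradients, lower semicontinuity of the jump measure). To get $u_n(t)\to u(t)$ in $L^1(\Omega')$, note that $u_n(t)-z_n(t)=u_n(t)\mathds{1}_{Q_n(t)}$. This is bounded in $L^\infty$ by $C_1$ and supported on $Q_n(t)$, whose measure tends to zero by \eqref{eq:triangleentozero}. So $\|u_n(t)-z_n(t)\|_{L^1}\le C_1\L^d(Q_n(t))\to0$.

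\textbf{Step 3: Boundary values and the estimate \eqref{eq:truncationIinfinitybound}.} Since $u_n(t)=g_n(t)\to g(t)$ uniformly on $\Omega'\setminus\overline{\Omega}$ (interpolation of a $W^{2,\infty}$ function with $h_n\to0$), $L^1$-convergence gives $u(t)=g(t)$ there. The bound \eqref{eq:truncationIinfinitybound} follows from lower semicontinuity in Ambrosio's theorem:
\begin{itemize}
\item $\int_\Omega|\nabla u(t)|^2\le\liminf_n\int_\Omega|\nabla u_n(t)-\gamma_n(t)|^2$;
\item $\|u(t)\|_{L^\infty}\le C_1$;
\item $\mathcal{H}^{d-1}(J_{u(t)})\le\liminf_n\mathcal{H}^{d-1}(\partial Q_n(t))$.
\end{itemize}
The constant $G$ appears from \eqref{eq:jumpsetestimate}. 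To land exactly on $C_1$, the constants need to be arranged, possibly by enlarging $C_1$ or defining $G$ accordingly; I expect this is a matter of bookkeeping of the constant.

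\textbf{Step 4: Diagonal argument.} Enumerate $I_{\infty,0}=\{\tau_1,\tau_2,\dots\}$, extract successive subsequences for each $\tau_k$, and take the diagonal. This yields a single subsequence along which both convergences in \eqref{eq:truncationIinfinityconvergence} hold for every $t\in I_{\infty,0}$.

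The main, if mild, obstacle is Step 1's jump-set control. One must check that the cubes touching $\partial\Omega'$ are included, so the cutoff creates no jumps at $\partial\Omega'$ inside the domain. One must also check that the count $\#\mathcal{Q}_n(t)\lesssim\varepsilon_n^{1-d}$ indeed yields the $n$-uniform perimeter bound \eqref{eq:jumpsetestimate}, which uses $h_n\le\varepsilon_n$ so that $Q_n(t)\subset U^{\T}_n(\Gamma_n(t)\cup\partial\Omega')$ has comparable measure.
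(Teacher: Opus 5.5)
Your Steps 1, 2 and 4 and the identification of the boundary values follow the paper's argument. You are even slightly more careful, since you check the gradient bound on $\Omega'\setminus\overline{\Omega}$ that Theorem~\ref{theorem:sbvcompactness} needs.

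The gap is the final estimate \eqref{eq:truncationIinfinitybound}, and it is not a matter of bookkeeping. The lemma asserts the bound with the specific constant $C_1$ of Lemma~\ref{lem:energybound}. Moreover, $G$ is not adjustable: it is the toughness in front of the surface term ($\kappa$, normalized to $1$ in the proofs), which is exactly how it enters \eqref{eq:jumpsetestimate}. Lower semicontinuity of $\mathcal{H}^{d-1}(J_{z_n(t)})$ with $J_{z_n(t)}\subset\partial Q_n(t)$ can only give $\mathcal{H}^{d-1}(J_{u(t)})\le 4dC_1\sqrt{d}/G+2dC_{\Omega'}$. This loss is intrinsic to the cube cut-off, for two reasons:
\begin{enumerate}
\item Around a crack, both faces of the layer of cubes typically lie in $J_{z_n(t)}$, but they collapse onto a single jump of $u(t)$ in the limit.
\item The cubes not contained in $\Omega'$ create jumps near $\partial\Omega'$ of total size comparable to $\mathcal{H}^{d-1}(\partial\Omega')$. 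These are unrelated to the energy and disappear in the limit.
\end{enumerate}
No rearrangement of constants within this route reaches $C_1$.

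The missing idea is to bound the surface term not through $z_n(t)$ but through the sharp $\Gamma$-liminf inequality of \cite[Theorem 5.1]{SchmidtFraternaliOrtiz2009}, applied to the pairs $(u_n(t),\gamma_n(t))$. This is legitimate once your Step 2 gives $u_n(t)\to u(t)$ in $L^1(\Omega')$. Since $\{\gamma_n(t)\neq0\}\subset\Gamma_n(t)$, one has $E_{\varepsilon_n}(u_n(t),\gamma_n(t))\le\mathcal{E}_n(u_n(t),\gamma_n(t),\Gamma_n(t))$, hence
\[
\int_\Omega|\nabla u(t)|^2\,\derx+\mathcal{H}^{d-1}(J_{u(t)})\le\liminf_{n\to\infty}\mathcal{E}_n(u_n(t),\gamma_n(t),\Gamma_n(t)).
\]
Now add $\|u(t)\|_{L^\infty(\Omega')}\le\liminf_{n}\|u_n(t)\|_{L^\infty(\Omega')}$ and use superadditivity of $\liminf$ together with Lemma~\ref{lem:energybound}; this gives exactly the bound $C_1$, and it is how the paper concludes.

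Your argument does prove a bound on $\int_\Omega|\nabla u(t)|^2\,\derx+\mathcal{H}^{d-1}(J_{u(t)})+\|u(t)\|_{L^\infty(\Omega')}$ that is uniform in $t\in I_{\infty,0}$. That is all that is used later (compactness in Theorem~\ref{theorem:continuationtheorem}), so the rest of the paper would survive. As a proof of \eqref{eq:truncationIinfinitybound} as stated, however, it is incomplete.
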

\begin{proof}
    Fix  $t\in I_{\infty,0} $. This sequence  $(z_n(t))_n$ inherits the $L^\infty$-bound of $u_n(t)$ given by \eqref{eq:energybound}. Concerning the gradient, by using  $\bruchdiscepsilonn(t) \subset Q_n(t)$ and   \eqref{eq:identityyni} we can estimate 
    \begin{align*}
        \intnormal{z_n(t)}\leq \int_{\Omega\setminus\bruchdiscepsilonn(t)}|\nabla u_n(t)|^2\,\derx=\intnormal{u_n(t)-\gamma_n(t)}\leq C_1. 
    \end{align*}
Moreover,  since $u_n(t)$ is continuous for every $\varepsilon_n$,  jumps of $z_n(t)$ can only occur along $\partial Q_n(t)$. Thus, in view of \eqref{eq:jumpsetestimate}, we obtain $\mathcal{H}^{d-1}(J_{z_n(t)})\leq C$ for a constant independent of $t$ and $n$. By Theorem \ref{theorem:sbvcompactness} we thus find $u(t) \in SBV^2(\Omega')$ such that \EEE   $z_n(t)\to u(t)$ in $SBV^2(\Omega')$ along a not relabeled subsequence. With a diagonal argument, we are then able to find a joint subsequence for all $t\in I_{\infty,0}$  \EEE such that we have $z_n(t)\to u(t)$ in $SBV^2(\Omega')$ for all $t\in I_{\infty,0}$. The uniform bound on $(u_n(t))_n$ along with \eqref{eq:triangleentozero} also  yields \EEE $u_n(t)\to u(t)$ in $L^1(\Omega')$. This also shows that \EEE the boundary values are still satisfied. Finally, we have 
    \begin{align*}
        &\intnormal{u(t)}+ G \EEE \mathcal{H}^{d-1}(J_{u(t)})+\|u(t)\|_{L^\infty(\Omega')}\\
        \leq\,&\liminf_{n\to\infty} \intnormal{u_n(t)-\gamma_n(t)}+\bruchmengedischen{t}+\|u_n(t)\|_{L^\infty(\Omega')} \leq C_1,
    \end{align*}
    due to the $\Gamma$-liminf-inequality  of \EEE  \cite[Theorem 5.1]{SchmidtFraternaliOrtiz2009}. This concludes the proof. 
\end{proof}

In Lemma \ref{lem:truncationIinfinity} we have found limits for $t\in I_{\infty,0}$ for suitable modifications. Next, we discuss that the cut-off can be also done with the union of cubes $Q_n(t)$ at some later time $t$.

\begin{lemma}
    \label{lem:alternativetruncation}
    Let $s\in I_{\infty,0}$ and $t\in [0, 1]$ with $s\leq t$. Then, it holds that 
    \begin{align*}
        u_n(s)\mathds{1}_{(Q_n(t))^c}\to u(s)\quad\emph{in } SBV^2(\Omega').
    \end{align*}
\end{lemma}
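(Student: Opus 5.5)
The idea is to compare $w_n := u_n(s)\mathds{1}_{(Q_n(t))^c}$ with $z_n(s) = u_n(s)\mathds{1}_{(Q_n(s))^c}$, which converges to $u(s)$ in $SBV^2(\Omega')$ by Lemma \ref{lem:truncationIinfinity}. Since $\Gamma_n(s)\subset\Gamma_n(t)$ by irreversibility (Theorem \ref{theorem:approx}(b)), the collection of cubes intersecting $\Gamma_n(s)$ or $\partial\Omega'$ is contained in the corresponding collection for time $t$. Hence $Q_n(s)\subset Q_n(t)$, and therefore $w_n = z_n(s)\mathds{1}_{(Q_n(t))^c}$. To prove $SBV^2$-convergence as in Theorem \ref{theorem:sbvcompactness}, it suffices to establish three things: a uniform bound on $\|w_n\|_{L^\infty}+\|\nabla w_n\|_{L^2}+\mathcal{H}^{d-1}(J_{w_n})$, convergence $w_n\to u(s)$ in $L^1$, and identification of the weak limits of the gradients and of the jump-set lower semicontinuity. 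These are the properties the paper calls $SBV^2$-convergence.

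For the bounds, the $L^\infty$ bound on $w_n$ is inherited from \eqref{eq:energybound}. For the gradient, $\Gamma_n(s)\subset Q_n(t)$ together with \eqref{limitinggamma} gives
\[
\int_{\Omega'}|\nabla w_n|^2\,\derx\le\int_{\Omega\setminus\Gamma_n(s)}|\nabla u_n(s)|^2\,\derx=\int_\Omega|\nabla u_n(s)-\gamma_n(s)|^2\,\derx\le C_1 .
\]
On $\Omega'\setminus\overline\Omega$ the function $u_n(s)$ coincides with $g_n(s)$, so it is bounded there as well. Since $u_n(s)$ is continuous, we have $J_{w_n}\subset\partial Q_n(t)$, and \eqref{eq:jumpsetestimate} bounds $\mathcal{H}^{d-1}(\partial Q_n(t))$ uniformly in $n$ and $t$.

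For the convergence, note that $w_n - z_n(s) = -z_n(s)\mathds{1}_{Q_n(t)\setminus Q_n(s)}$. Its $L^1$ norm is at most $C_1\L^d(Q_n(t))$, which tends to $0$ by \eqref{eq:triangleentozero}. Hence $w_n\to u(s)$ in $L^1(\Omega')$. By Theorem \ref{theorem:sbvcompactness}, every subsequence then has a further subsequence converging in the $SBV^2$ sense, and the limit must be $u(s)$. It follows that the whole sequence converges.

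The main point to check is that the notion of $SBV^2$-convergence in \eqref{eq:sbvcompactnessresult} requires nothing beyond the compactness statement, namely weak $L^2$ convergence of gradients and $L^1$ convergence. If it did require more, such as strong convergence of gradients, I would instead compare the gradients directly. The difference satisfies $\nabla w_n-\nabla z_n(s) = -\nabla u_n(s)\mathds{1}_{Q_n(t)\setminus Q_n(s)}$. Equi-integrability of $|\nabla z_n(s)|^2$ does not follow for free, so in that case I would only claim weak convergence. This matches the convention of Theorem \ref{theorem:sbvcompactness}, where $\nabla w_n\rightharpoonup\nabla u(s)$ weakly in $L^2$ follows since the difference is bounded in $L^2$ and supported on sets of vanishing measure, hence converges weakly to $0$.
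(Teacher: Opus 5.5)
Your proof is correct and follows essentially the same route as the paper: the bounds from Lemma~\ref{lem:truncationIinfinity} give $SBV^2$-compactness of $u_n(s)\mathds{1}_{(Q_n(t))^c}$. The estimate $\|u_n(s)\mathds{1}_{(Q_n(s))^c}-u_n(s)\mathds{1}_{(Q_n(t))^c}\|_{L^2(\Omega')}\le C_1\,\mathcal{L}^d(Q_n(t))^{1/2}\to 0$ then identifies every limit with $u(s)$; your explicit use of $Q_n(s)\subset Q_n(t)$ and of the subsequence principle only spells out steps the paper leaves implicit (note that the gradient bound over $\Omega'$ should include the harmless term $\int_{\Omega'\setminus\Omega}|\nabla g_n(s)|^2\,\mathrm{d}x$, as you remark).
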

\begin{proof}
    The function $u_n(s)\mathds{1}_{(Q_n(t))^c}$ has the same bounds as in Lemma \ref{lem:truncationIinfinity} which means that 
    \begin{align}
        \label{eq:uen(s)truncation(t)toz}
        u_n(s)\mathds{1}_{(Q_n(t))^c}\to z
    \end{align}
    for some $z\in SBV^2(\Omega')$ and a not relabeled subsequence.  Now, we can estimate 
    \begin{align*}
        \| u_n(s)\mathds{1}_{(Q_n(s))^c} -u_n(s)\truncation{t}\|_{L^2(\Omega')}\leq \|u_n(s)\|_{L^\infty(\Omega')}\mathcal{L}^d(Q_n(t))^\frac{1}{2}\leq C_1 \mathcal{L}^d(Q_n(t))^\frac{1}{2}
    \end{align*}
    and with \eqref{eq:triangleentozero} we get that $z -u_n(s)\truncation{ s} \to 0 $ in $L^2(\Omega')$. 
This along with  \eqref{eq:truncationIinfinityconvergence} and \eqref{eq:uen(s)truncation(t)toz} yields the  result.
\end{proof}

The next result addresses the unilateral stability property of the limiting evolution at times $t \in I_{\infty,0} \EEE$.  For convenience, we introduce the notation  
\begin{align}
    \label{eq:Gammadefinition}
    \Gamma(t):= \bigcup_{\tau\in I^t_{\infty,0}} \EEE J_{u(\tau)},
\end{align}
where   $I_{ \infty, 0 }^{ t } := \left\{ \tau \in I_{ \infty, 0} \colon \, \tau \leq t \right\}$.   \EEE
\begin{theorem}[Stability result]
    \label{theorem:stabilityresulteps}
    Let $t\in I_{\infty,0}$. \EEE Then for every $ \phi \EEE\in SBV^2(\Omega')$ with $ \phi = g(t)$ on $\Omega' \setminus \overline{\Omega}$ it holds that  
    \begin{align}
        \label{eq:stabilityresulteps}
        \int_{\Omega}|\nabla u(t)|^2\,\derx \leq \int_{\Omega}|\nabla \phi \EEE|^2\,\derx+ \mathcal{H}^{d-1}\left(J_{\EEE\phi}\setminus \Gamma(t)\right).
    \end{align}
\end{theorem}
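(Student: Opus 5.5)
We adapt the jump-transfer strategy of \cite{FrancfortLarsen2003} to the eigenfracture setting. Fix $t\in I_{\infty,0}$ and a competitor $\phi\in SBV^2(\Omega')$ with $\phi=g(t)$ on $\Omega'\setminus\overline\Omega$. The plan is to build, for each $n$, an admissible pair $(\phi_n,\bar\gamma_n)\in V_{h_n}(\Omega')\times W_{h_n}(\Omega')$ with $\phi_n=g_n(t)$ and $\bar\gamma_n=0$ outside $\overline\Omega$. We then insert it into the global stability (c) of Theorem~\ref{theorem:approx}. Subtracting the common surface term, (c) gives
\begin{align*}
\int_\Omega|\nabla u_n(t)-\gamma_n(t)|^2\,\derx\le\int_\Omega|\nabla\phi_n-\bar\gamma_n|^2\,\derx+\frac{1}{2\eps_n}\L^d\big(U^\T_n(\{\bar\gamma_n\neq0\})\setminus U^\T_n(\Gamma_n(t))\big).
\end{align*}
The left-hand side is handled by Lemma~\ref{lem:truncationIinfinity}: $\nabla z_n(t)\rightharpoonup\nabla u(t)$ weakly in $L^2$ and $|\nabla z_n(t)|\le|\nabla u_n(t)-\gamma_n(t)|$, so its liminf dominates $\int_\Omega|\nabla u(t)|^2$. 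It then suffices to show two things: the right-hand elastic term tends to $\int_\Omega|\nabla\phi|^2+o(1)$, and the added surface term is at most $\mathcal H^{d-1}(J_\phi\setminus\Gamma(t))+\eta$ for arbitrary $\eta>0$.

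\textbf{Step 1 (reductions).} By density (Cortesani--Toader type) we first reduce to $\phi$ whose jump set is a finite union of closed polyhedral pieces, with $\phi$ smooth away from it. This changes $\int|\nabla\phi|^2$ and $\mathcal H^{d-1}(J_\phi\setminus\Gamma(t))$ only slightly, provided the approximation is done relative to the fixed set $\Gamma(t)$ in the Francfort--Larsen manner. Next we split $J_\phi$ into two parts: $J_\phi\cap\Gamma(t)$, whose jump must be transferred to the approximating cracks, and $J_\phi\setminus\Gamma(t)$, which is paid for directly.

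\textbf{Step 2 (jump transfer).} Since $\Gamma(t)=\bigcup_{\tau\in I^t_{\infty,0}}J_{u(\tau)}$, up to small $\mathcal H^{d-1}$-measure $J_\phi\cap\Gamma(t)$ is covered by $J_{u(\tau_1)},\dots,J_{u(\tau_k)}$ for finitely many $\tau_j\le t$. By Lemma~\ref{lem:alternativetruncation}, $u_n(\tau_j)\mathds 1_{Q_n(t)^c}\to u(\tau_j)$ in $SBV^2$, and these functions jump only on $\partial Q_n(t)$. Following \cite{FrancfortLarsen2003}, we cover $J_{u(\tau_j)}$ by small disjoint balls in which $J_{u(\tau_j)}$ is nearly flat with well-separated traces. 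The coarea formula applied to $u_n(\tau_j)$ then gives level sets $\{u_n(\tau_j)>s\}$ whose boundaries separate each ball into two parts, with additional surface outside $Q_n(t)$ of small measure. On these parts we glue $\phi$ from either side.

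The main obstacle is the next point. The extra separating surface $S_n$ has small $\mathcal H^{d-1}$-measure, but what must be controlled is $\frac1{2\eps_n}\L^d(U^\T_n(S_n)\setminus U^\T_n(\Gamma_n(t)))$, i.e.\ a neighborhood volume. We handle this by choosing a \emph{minimal} separating extension (Lemma~\ref{lem:minimalseparator}), which comes with a lower density bound. A Besicovitch/Vitali covering of $S_n$ by balls of radius $\sim\eps_n$ then shows that the $\eps_n$-neighborhood of $S_n$ has volume $\lesssim\eps_n\mathcal H^{d-1}(S_n)+o(\eps_n)$, using also $h_n/\eps_n\to0$. After gluing, $\phi_n$ jumps only inside $Q_n(t)\cup S_n$. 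We take $\phi_n$ as its $V_{h_n}$-interpolation after mollification away from the jumps, and set $\bar\gamma_n=\nabla\phi_n$ on the simplices meeting that set. These simplices lie in $U^\T_n(\Gamma_n(t))\cup U^\T_n(S_n)$, so they cost nothing, respectively only $o(1)$.

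\textbf{Step 3 (new crack).} On the remaining part $J_\phi\setminus\Gamma(t)$ we use the $\Gamma$-limsup construction of \cite[Theorem 5.1]{SchmidtFraternaliOrtiz2009} for the polyhedral set, which gives surface cost $\mathcal H^{d-1}(J_\phi\setminus\Gamma(t))+o(1)$. The gradients of the glued function converge strongly to $\nabla\phi$ off these sets, and $\L^d(Q_n(t))\to0$ by \eqref{eq:triangleentozero}. Passing to the limit and letting $\eta\to0$ then yields \eqref{eq:stabilityresulteps}.
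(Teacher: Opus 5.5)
Your overall architecture matches the paper's. Both reduce to regular competitors, test the global stability (c) with a transferred pair, use lower semicontinuity via $z_n(t)$ on the left, use Minkowski content for the part of $J_\phi$ outside the transfer cubes, and use a minimal separating extension with a lower density bound. Your reduction of $J_\phi\cap\Gamma(t)$ to finitely many $J_{u(\tau_j)}$ is even more explicit than the paper, whose construction only transfers $J_{u(t)}$.

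\textbf{The gap in Step 2.} It sits exactly where eigenfracture differs from Griffith. You select separating level sets whose area is controlled only outside $Q_n(t)$. You then declare the simplices meeting $Q_n(t)$ free because they lie in $U^\T_n(\Gamma_n(t))$. But, as your own inequality shows, the price is $\frac{1}{2\eps_n}\L^d\big(U^\T_n(\{\bar\gamma_n\neq0\})\setminus U^\T_n(\Gamma_n(t))\big)$. So it is the $\eps_n$-neighbourhood of the support of $\bar\gamma_n$, not the support itself, that must essentially lie in $U^\T_n(\Gamma_n(t))$.

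The cubes forming $Q_n(t)$ have diameter $\eps_n$. Hence $\partial Q_n(t)$, and in general any part of your separating surface in $Q_n(t)\setminus\Gamma_n(t)$, lies at distance of order $\eps_n$ from $\Gamma_n(t)$. For instance, let $\Gamma_n(t)$ be a thin flat strip and let $\phi_n$ jump along $\partial Q_n(t)$. Then the $\eps_n$-neighbourhood of the corresponding simplices exceeds $U^\T_n(\Gamma_n(t))$ by a slab of width of order $\eps_n/\sqrt d$. This is an $O(1)$ extra surface energy, a fixed fraction of $\mathcal H^{d-1}(J_\phi\cap\Gamma(t))$, and it does not vanish.

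By Lemma~\ref{lem:incneighbourhoodlemma}, enlarging the radius by $2\theta\eps_n$ costs only $C\theta\eps_n$ (Corollary~\ref{thecorollary}). This is why the paper requires new jumps to lie within $l_n=\theta\eps_n+h_n$ of $\Gamma_n(t)$. An enlargement by a fixed fraction of $\eps_n$, as in your construction, is not small.

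\textbf{What is missing.} The fix is to put the precrack inside $\Gamma_n(t)$ itself. The paper runs the Francfort--Larsen level-set selection on $y_n=u_n(t)\mathds 1_{\Gamma_n(t)^c}$, whose jump set lies in $\partial\Gamma_n(t)\subset\Gamma_n(t)$. The $BV$ coarea formula works even though $\mathcal H^{d-1}(J_{y_n})$ is unbounded, so the $SBV^2$-compact surrogate built from $Q_n(t)$ is not needed here. In your multi-time version you would use $u_n(\tau_j)\mathds 1_{\Gamma_n(\tau_j)^c}$, noting that $\partial\Gamma_n(\tau_j)\subset\Gamma_n(t)$.

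Your covering estimate for $S_n$ is also incomplete. At points of $S_n$ within distance $\eps_n$ of the precrack, no density bound at scale $\eps_n$ is available, so the claimed $+o(\eps_n)$ is unjustified. The paper instead works at two scales:
\begin{itemize}
\item The part of $\Phi^n_i$ inside $U_{l_n}(\Gamma_n(t))$ is absorbed by Corollary~\ref{thecorollary}.
\item For the remainder, Lemma~\ref{lem:minimalseparator} applies at scale $\bar\eps_n=\theta\eps_n$. Combined with $\sum_i\mathcal H^{d-1}(\Phi^n_i)\le C\theta^{d+1}$ and Besicovitch, it gives volume $C\theta\eps_n$ (Corollary~\ref{corollary:minimalseparator}).
\end{itemize}
The resulting error is $C\theta$ rather than $o(1)$, and it is removed only at the end by letting $\theta\to0$.
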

The proof is delicate and the most original part of the paper. It is deferred to Section \ref{section:stabilityresult} below.   
We   now summarize the  results already found and derive a limit for the broken simplices. \EEE We recall the functions $\lambda_n$ defined in \eqref{eq:lambdaepsilondefinition} as well as their pointwise limit  $\lambda_0$ \EEE obtained by Helly's theorem.  \EEE 

\begin{lemma}
    \label{lemma:firstsummary}
It holds that \EEE
    \begin{align*}
        \lambda_0(t)\geq \mathcal{H}^{d-1}\left(\Gamma(t)\right) \quad \text{for all $t\in [0,1]$.}
    \end{align*}
\end{lemma}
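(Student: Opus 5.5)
We will show that for every finite collection of times $\tau_1<\dots<\tau_k$ in $I^t_{\infty,0}$ we have
\begin{align*}
\mathcal{H}^{d-1}\Big(\bigcup_{j=1}^k J_{u(\tau_j)}\Big) \le \lambda_0(t).
\end{align*}
Since $I_{\infty,0}$ is countable, $\Gamma(t)$ is an increasing countable union of such finite unions, so continuity of measure then yields $\mathcal{H}^{d-1}(\Gamma(t)) \le \lambda_0(t)$.

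\textbf{Reduction to a single function.} Fix $\tau_1<\dots<\tau_k\le t$ in $I^t_{\infty,0}$. Since the jump sets are countably $\mathcal{H}^{d-1}$-rectifiable, we choose generic constants $c_1,\dots,c_k$ and, if needed, replace $u(\tau_j)$ by $c_j\,\arctan(u(\tau_j))$ (this keeps the jump sets unchanged). We then set
\begin{align*}
v := \sum_{j=1}^k c_j u(\tau_j).
\end{align*}
For all but countably many choices of the $c_j$, jumps cannot cancel on $\bigcup_j J_{u(\tau_j)}$ up to an $\mathcal{H}^{d-1}$-null set, because on each point the jumps form a finite set of nonzero values. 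Hence $J_v = \bigcup_j J_{u(\tau_j)}$ up to $\mathcal{H}^{d-1}$-null sets.

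\textbf{Construction of an approximating sequence.} The natural approximants are
\begin{align*}
v_n := \sum_j c_j\, u_n(\tau_j) = \sum_j c_j\, u_n(\tau_j)\mathds{1}_{(Q_n(t))^c} \quad \text{on } (Q_n(t))^c .
\end{align*}
By Lemma \ref{lem:alternativetruncation}, each $u_n(\tau_j)\mathds{1}_{(Q_n(t))^c}$ converges to $u(\tau_j)$ in $SBV^2(\Omega')$, and hence $v_n\mathds{1}_{(Q_n(t))^c}\to v$ in $L^1(\Omega')$. The key point is that we should not use the Ambrosio lower semicontinuity for the jump set of $v_n\mathds{1}_{(Q_n(t))^c}$, since that set is $\partial Q_n(t)$, which is too large. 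Instead, we view $(v_n, \tilde\gamma_n)$ as an eigenfracture pair with
\begin{align*}
\tilde\gamma_n := \nabla v_n \mathds{1}_{\Gamma_n(t)} .
\end{align*}
Since $\{\gamma_n(\tau_j)\neq0\}\subset\Gamma_n(\tau_j)\subset\Gamma_n(t)$ by irreversibility, we have $\nabla v_n - \tilde\gamma_n = \sum_j c_j(\nabla u_n(\tau_j)-\gamma_n(\tau_j))$ outside $\Gamma_n(t)$. This quantity is bounded in $L^2$ by Lemma \ref{lem:energybound}. In addition, $U^{\mathcal T}_n(\{\tilde\gamma_n\neq0\})\subset U^{\mathcal T}_n(\Gamma_n(t))$.

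\textbf{Passage to the limit.} We apply the $\Gamma$-liminf inequality of \cite[Theorem 5.1]{SchmidtFraternaliOrtiz2009}, in its separated form where the surface part bounds $\mathcal{H}^{d-1}(J_v)$. Combined with $v_n\to v$ in $L^1$ (the $u_n(\tau_j)$ converge in $L^1$ by Lemma \ref{lem:truncationIinfinity}), this gives
\begin{align*}
\mathcal{H}^{d-1}(J_v)\le\liminf_n \frac{1}{2\varepsilon_n}\mathcal{L}^d\big(U^{\mathcal T}_n(\Gamma_n(t))\big)=\lambda_0(t).
\end{align*}
To isolate the surface term, we use a scaling trick: replace $v$ by $\eta v$ and $v_n$ by $\eta v_n$. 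The elastic contribution then scales as $\eta^2 C$, while the jump set is unchanged, and letting $\eta\to0$ yields the claim.

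\textbf{Main obstacle.} The delicate step is checking that the liminf inequality of \cite{SchmidtFraternaliOrtiz2009} applies with the $\limsup$-free separation, i.e.\ that the surface liminf alone bounds $\mathcal{H}^{d-1}(J_v)$. The $\eta$-scaling argument addresses exactly this. A secondary issue is the genericity argument ensuring $J_v$ equals the union of the jump sets.
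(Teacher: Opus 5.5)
Your argument is correct, and it takes a different route from the paper's.

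\textbf{Comparison with the paper.} The paper never forms a combined function. It uses the localized, surface-only liminf inequality \eqref{gammaV} on open sets $V\subset\Omega'$, applied to each $u(s_i)$ with its own eigenstrain $\gamma_n(s_i)$. It handles the overlap of the jump sets at different times with \cite[Lemma~A.3]{FriedrichSteinkeStinson2025}, which provides pairwise disjoint open sets $V_i$ with $\mathcal{H}^{d-1}(\bigcup_iJ_{u(s_i)})-\mu\le\sum_i\mathcal{H}^{d-1}(J_{u(s_i)}\cap V_i)$. Summing the liminfs over the disjoint $V_i$ and using $\{\gamma_n(s_i)\neq0\}\subset\Gamma_n(t)$ then gives the bound by $\lambda_0(t)$.

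You instead resolve the overlap algebraically. You encode the finitely many jump sets in one function $v=\sum_jc_ju(\tau_j)$ and test with one pair $(\eta v_n,\eta\tilde\gamma_n)$ whose eigenstrain lives in $\Gamma_n(t)$, so the surface term is bounded by $\lambda_n(t)$ at once. Your $L^2$ bound on $\nabla v_n-\tilde\gamma_n$, via $\Gamma_n(\tau_j)\subset\Gamma_n(t)$ and Lemma~\ref{lem:energybound}, is right. The $\eta$-scaling is a clean way to extract the surface estimate from the full $\Gamma$-liminf.

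What your route buys is that you only need the $\Gamma$-liminf of \cite{SchmidtFraternaliOrtiz2009} as stated. You should apply it on $\Omega'$ without boundary conditions, since $\eta v_n$ does not carry the datum $g_n$; the extra elastic term on $\Omega'\setminus\overline{\Omega}$ is also $O(\eta^2)$. The paper instead relies on a separated and localized form of that inequality. On the other hand, the paper's localized inequality is exactly what it reuses for items (i) and (iv) of Theorem~\ref{theorem:griffithmin}, so it gets more mileage from it.

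\textbf{Two small fixes.}

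\emph{Genericity claim.} As stated ("all but countably many choices of the $c_j$"), it is false. For instance, if $[u(\tau_1)]=[u(\tau_2)]$ on a set of positive $\mathcal{H}^{d-1}$-measure, every $c$ with $c_1=-c_2$ causes cancellation there. What is true, and suffices, follows from Fubini applied to the finite measure $\mathcal{H}^{d-1}$ restricted to $\bigcup_jJ_{u(\tau_j)}$, with jumps taken with respect to a common orientation of this rectifiable set. For $\mathcal{L}^k$-a.e.\ $c\in\R^k$, the cancellation set $\{x:\sum_jc_j[u(\tau_j)](x)=0\}$ is $\mathcal{H}^{d-1}$-null, because each such $x$ excludes only a hyperplane of $c$'s.

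\emph{The $\arctan$ modification.} Drop it. The $u(\tau_j)$ are already bounded by \eqref{eq:truncationIinfinitybound}. Moreover, composing the approximants with $\arctan$ would take $v_n$ out of $V_{h_n}(\Omega')$, where the $\Gamma$-liminf is applied.
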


\begin{proof}    
We start with  a preliminary observation. Given any open set $V \subset \Omega'$, the $\Gamma$-liminf inequality of  \cite[Theorem 5.1]{SchmidtFraternaliOrtiz2009} along with \eqref{eq:truncationIinfinityconvergence} implies that  
    \begin{align}\label{gammaV}
       \EEE  \mathcal{H}^{d-1}(J_{u(s)} \cap V)\leq \liminf_{n\to\infty} \frac{1}{2\varepsilon_n}\L^d\big(\hen{\gamma_n(s)} \cap V \big) \quad \text{ for all } s \in I_{\infty,0}.
    \end{align}
Therefore, given any number of points $(s_i)_{i}^N \subset I_{\infty,0}^t$ and  $\mu>0$ arbitrary, we can apply \cite[Lemma~A.3]{FriedrichSteinkeStinson2025} to find open, pairwise disjoint sets $(V_i)_{i=1}^N$ in $\Omega'$ such that 
\begin{align*}
   \EEE \mathcal{H}^{d-1} \Big( \bigcup_{i=1}^N J_{u(s_i)}  \Big)  - \mu\EEE & \le     \EEE \sum_{i=1}^N \mathcal{H}^{d-1} \big(J_{u(s_i)}  \cap V_i \big) \le \sum_{i=1}^N \liminf_{n \to \infty} \frac{1}{2\varepsilon_n}\L^d\big(\hen{\gamma_n(s_i)} \cap V_i \big).
\end{align*}
Then, recalling that $\{\gamma_{n}(\tau)\neq 0\} \subset \Gamma_n(t)$ for all $\tau \le t$, see \eqref{limitinggamma}, \EEE   and using  \eqref{eq:lambdaepsilondefinition}, we derive 
\begin{align*}
   \EEE \mathcal{H}^{d-1} \Big( \bigcup_{i=1}^N J_{u(s_i)}  \Big)  - \mu \EEE  \le \liminf_{n\to\infty} \frac{1}{2\varepsilon_n}\L^d\big(U^{\mathcal{T}}_n(\Gamma_n(t))\big) = \lambda_0(t).
\end{align*}
Exhausting  $I_{\infty,0}^t$ by sending $N \to \infty$ and recalling that $\mu>0 $ was arbitrary, the proof is concluded.
\end{proof}

\EEE
%
%
%
%

Next, we concern ourselves with the extension of the displacement fields onto the entire time interval. This is achieved by a limit from below, namely for each $t \notin I_{\infty,0}$ we choose a sequence $(t_n)_n \subset I_{\infty,0}$ with $t_n\nearrow t$, and  we \EEE define
\begin{align*}
    u(t):= \lim_{n\to \infty} u(t_n),
\end{align*}
where the limit has to be understood  in \EEE the $SBV^2$-sense. For this type of extension,  in \EEE \cite[Lemma 3.8]{FrancfortLarsen2003} it has been shown that   the following properties hold.
\begin{theorem}
    \label{theorem:continuationtheorem}
    For every $t\in [0, 1]$, we have $u(t)\in SBV^2(\Omega')$ with $u(t) = g(t) $ on $\Omega' \setminus \overline{\Omega}$ such that $\nabla u\in L^\infty([0, 1]; L^2(\Omega'; \R^d))$ and \EEE $\nabla u$ is left continuous in $[0, 1]\setminus I_{\infty,0}$ with respect to the strong $L^2(\Omega';\R^d)$-topology. Additionally, with $\Gamma$ as defined as in (\ref{eq:Gammadefinition}), the following properties hold: 
    \begin{enumerate}[label=(\roman*)]
        \item[\rm (i)] for all $t\in [0, 1]$,
        \begin{align}
            \label{eq:jumpisingamma}
            J_{u(t)}   \,\Tilde{\subset} \,\EEE \Gamma(t)
        \end{align}
        and for $\lambda_0$ as in Lemma \ref{lemma:firstsummary} we have 
        \begin{align}
            \label{eq:lambdainequalityentiretimeinterval}
           \lambda_0\EEE(t)\geq G \mathcal{H}^{d-1}(\Gamma(t)).
        \end{align}
        \item[\rm (ii)] for all $\bar{u} \in SBV^2(\Omega')$ with $\bar{u}=g(0)$ on $\Omega'\setminus \overline{\Omega}$
        \begin{align} 
            \label{eq:minimalityzeroentiretimeinterval}
            \int_{\Omega}|\nabla u(0)|^2\,\derx + \mathcal{H}^{d-1}(J_{u(0)})\leq \int_{\Omega}|\nabla \bar{u}|^2\,\derx + \mathcal{H}^{d-1}(J_{\bar{u}}). 
        \end{align}
        \item[\rm (iii)] for all $t\in (0, 1]$ and for all $\bar{u}\in SBV^2(\Omega')$ with $z=g(t)$ on $\Omega' \setminus \overline{\Omega}$
        \begin{align}
            \label{eq:minimalityentiretimeinterval}
            \int_{\Omega}|\nabla u(t)|^2\,\derx \leq \int_{\Omega}|\nabla \bar{u}|^2\,\derx+ \mathcal{H}^{d-1}(J_{\bar{u}}\setminus\Gamma(t)).
        \end{align}
    \end{enumerate}
    Finally, 
    \begin{align}
        \label{eq:energyequalityentiretimeinterval}
        \mathcal{E}(t)   \ge  \mathcal{E}(0) + 2\int_0^t\int_{\Omega}\nabla u(s)\cdot \partial_t\nabla  g(s)\,\derx\,\ders,
    \end{align}
    where 
    \begin{align}
        \label{eq:energydefinition}
        \mathcal{E}(t):= \int_{\Omega}|\nabla u(t)|^2\,\derx+ \mathcal{H}^{d-1}(\Gamma(t)). 
    \end{align}
\end{theorem}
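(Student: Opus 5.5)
We follow the classical argument of \cite[Lemma 3.8]{FrancfortLarsen2003}, using as input Lemma \ref{lem:truncationIinfinity}, Theorem \ref{theorem:stabilityresulteps} and Lemma \ref{lemma:firstsummary}. First, for $t\notin I_{\infty,0}$ and $t_k\nearrow t$ in $I_{\infty,0}$, the bound \eqref{eq:truncationIinfinitybound} together with Theorem \ref{theorem:sbvcompactness} yields a subsequential $SBV^2$-limit $u(t)$ with $u(t)=g(t)$ on $\Omega'\setminus\overline\Omega$, using that $g(t_k)\to g(t)$. For $s\le t$ in $I_{\infty,0}$ we have $J_{u(s)}\subset\Gamma(t)$ by \eqref{eq:Gammadefinition}. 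Since $\Gamma(t_k)$ is increasing with $\mathcal{H}^{d-1}(\Gamma(t_k))\le \lambda_0(1)<\infty$ by Lemma \ref{lemma:firstsummary}, the lower semicontinuity of $\mathcal{H}^{d-1}(J_{\cdot}\setminus K)$ along $SBV^2$-convergence, applied with $K=\Gamma(t_k)$ as $k$ grows and combined with the bound on $\mathcal{H}^{d-1}(\Gamma(t)\setminus\Gamma(t_k))$, gives $J_{u(t)}\,\tilde\subset\,\bigcup_k\Gamma(t_k)=\Gamma(t)$. Here we use $t\notin I_{\infty,0}$, so that $\Gamma(t)=\bigcup_k\Gamma(t_k)$. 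This proves \eqref{eq:jumpisingamma}. The estimate \eqref{eq:lambdainequalityentiretimeinterval} is exactly Lemma \ref{lemma:firstsummary}.

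Next we establish stability. For $t\notin I_{\infty,0}$ and a competitor $\bar u$ with boundary value $g(t)$, we test \eqref{eq:stabilityresulteps} at $t_k$ with $\bar u+g(t_k)-g(t)$. This gives
\[
\|\nabla u(t_k)\|_{L^2(\Omega)}^2\le\|\nabla \bar u+\nabla g(t_k)-\nabla g(t)\|_{L^2(\Omega)}^2+\mathcal{H}^{d-1}(J_{\bar u}\setminus\Gamma(t_k)).
\]
Letting $k\to\infty$, lower semicontinuity of the bulk term and $\mathcal{H}^{d-1}(J_{\bar u}\setminus\Gamma(t_k))\to\mathcal{H}^{d-1}(J_{\bar u}\setminus\Gamma(t))$ (monotone convergence) yield \eqref{eq:minimalityentiretimeinterval}. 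Testing the same inequality with $\bar u=u(t)$, which is admissible with zero surface term by \eqref{eq:jumpisingamma}, shows that $\|\nabla u(t_k)\|_{L^2}\to\|\nabla u(t)\|_{L^2}$. Hence $\nabla u(t_k)\to\nabla u(t)$ strongly in $L^2$.

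Strong convergence together with strict convexity of the bulk energy makes $\nabla u(t)$ independent of the chosen sequence. Indeed, two limits $v_1,v_2$ both minimize $\int|\nabla\cdot|^2$ among fields with jumps in $\Gamma(t)$, so their average is also admissible, and strict convexity forces $\nabla v_1=\nabla v_2$. This uniqueness of gradients is what gives left continuity of $\nabla u$ on $[0,1]\setminus I_{\infty,0}$, while the $L^\infty(L^2)$ bound follows from \eqref{eq:truncationIinfinitybound}. Item (ii) follows from the $\Gamma$-convergence of \cite{SchmidtFraternaliOrtiz2009}: a recovery sequence for $\bar u$ is a competitor for the minimality of $(u_n(0),\gamma_n(0))$, and the liminf inequality along $z_n(0)$ yields \eqref{eq:minimalityzeroentiretimeinterval}.

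Finally, \eqref{eq:energyequalityentiretimeinterval} follows from the Riemann-sum argument of \cite{FrancfortLarsen2003}. Take partitions $0=s_0<\dots<s_N=t$ with $s_i\in I_{\infty,0}$, and test the minimality at $s_{i}$ with $u(s_{i+1})-g(s_{i+1})+g(s_i)$, whose jump set lies in $\Gamma(s_{i+1})$. This gives
\[
\mathcal{E}(s_{i+1})-\mathcal{E}(s_i)\ge 2\int_{s_i}^{s_{i+1}}\int_\Omega\nabla u(s_{i+1})\cdot\partial_t\nabla g\,\derx\,\ders-o(s_{i+1}-s_i).
\]
Summing over $i$ and refining the partitions then yields the claim. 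The main obstacle is passing from the Riemann sums to the integral, which requires a.e.\ convergence in $s$ of the piecewise-constant approximations of $\nabla u$. As in \cite{FrancfortLarsen2003}, this follows from left continuity of $\nabla u$ outside the countable set $I_{\infty,0}$ together with a careful choice of partitions (Hahn's lemma on Riemann sums of Bochner integrable functions). A secondary delicate point is the exceptional time $t$ itself, which is handled by approximating from below and using left continuity together with the monotonicity of $\mathcal{H}^{d-1}(\Gamma(\cdot))$.
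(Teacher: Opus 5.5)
Your proposal follows the classical route that the paper only cites (\cite[Lemma 3.8]{FrancfortLarsen2003}, \cite[Proposition 5.9]{Giacomini2005}). Your direct derivation of \eqref{eq:jumpisingamma}, of \eqref{eq:minimalityentiretimeinterval}, of the uniqueness of $\nabla u(t)$ and of left continuity is sound. Theorem \ref{theorem:stabilityresulteps} already gives unilateral minimality with respect to $\Gamma(t_k)$, and $\Gamma(t)=\bigcup_k\Gamma(t_k)$ for $t\notin I_{\infty,0}$, so monotone convergence suffices where the paper appeals to \cite[Corollary 2.10]{FrancfortLarsen2003}. The lower semicontinuity of $v\mapsto\mathcal{H}^{d-1}(J_v\setminus K)$ for the non-closed set $K=\Gamma(t_k)$ deserves one line, but it is correct: split $\mathcal{H}^{d-1}|_{J_{u(t_j)}}$ into its parts on and off $K$, and note that weak-$*$ limits of the first part are dominated by $\mathcal{H}^{d-1}|_{K}$.

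\textbf{The gap: Riemann sums in \eqref{eq:energyequalityentiretimeinterval}.} Testing minimality at $s_i$ with $u(s_{i+1})-g(s_{i+1})+g(s_i)$ evaluates $\nabla u$ at right endpoints. So you need $\nabla u(s_{i+1})$ to approximate $\nabla u(\tau)$ for a.e.\ $\tau\in(s_i,s_{i+1}]$, which is control from the right; left continuity alone does not give this. Nor can Hahn's lemma (or \cite[Lemma 4.12, Remark 4.13]{dMasoFrancfortToader2005}) be combined with your constraint $s_i\in I_{\infty,0}$. These results pick partition points by averaging over shifts, so they can place them in a prescribed set of full measure, but not in the countable set $I_{\infty,0}$.

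The constraint is unnecessary. You have already shown \eqref{eq:minimalityentiretimeinterval} and $J_{u(s)}\,\tilde{\subset}\,\Gamma(s)$ for every $s$; at $s=0$, minimality with respect to $\Gamma(0)=J_{u(0)}$ follows from (ii). So you may use arbitrary partitions and apply the Riemann-sum lemma to the bounded map $s\mapsto\nabla u(s)$, which is strongly measurable because it is left continuous off a countable set. This is exactly what the paper does in the proof of Lemma \ref{lem:bothinequalitesdisc}, and it also removes the separate treatment of $t\notin I_{\infty,0}$.

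Alternatively, you can upgrade left continuity to continuity off a countable set, after which any partitions with vanishing mesh (including ones in the dense set $I_{\infty,0}$) work by dominated convergence. One way is to run your uniqueness argument for $t_k\searrow t$: the limit is unilaterally minimal with respect to $\bigcap_k\Gamma(t_k)$, which agrees with $\Gamma(t)$ up to an $\mathcal{H}^{d-1}$-null set wherever $s\mapsto\mathcal{H}^{d-1}(\Gamma(s))$ is continuous. Another is the elementary fact that a map which is left continuous off a countable set is continuous off a countable set.

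\textbf{Two smaller points.} First, $\|\nabla g(s_{i+1})-\nabla g(s_i)\|_{L^2}^2$ is not $o(s_{i+1}-s_i)$ for $g\in W^{1,1}$ in time. It is, however, at most $\big(\int_{s_i}^{s_{i+1}}\|\partial_t\nabla g\|_{L^2}\,{\rm d}s\big)^2$, and the sum of these vanishes with the mesh. Second, in (ii) the $\Gamma$-liminf inequality must be applied to $(u_n(0),\gamma_n(0))$ with $u_n(0)\to u(0)$ in $L^1$, not to $z_n(0)$. The jump set of $z_n(0)$ is only controlled through $\mathcal{H}^{d-1}(\partial Q_n(0))$, which does not carry the sharp surface constant.
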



The proof is similar to the one in \cite[Propostion 5.9]{Giacomini2005}, fundamentally based on the stability result in Theorem \ref{theorem:stabilityresulteps}. We sketch it in Appendix \ref{sec:appendix}. We come to a final preliminary result before  proving \EEE Theorem~\ref{theorem:griffithmin}. It extends the compactness result of   displacements  given in Lemma \ref{lem:truncationIinfinity} to general times.    
\begin{lemma}[Compactness of displacements]
    \label{lem:convergenceoutsideIinfinity}
For every $t\in  [0, 1] $,  \EEE we have that 
    \begin{align*}
    \nabla z_n(t) \rightharpoonup\nabla u(t)\quad\emph{weakly in }L^2(\Omega;\R^d). 
    \end{align*}
\end{lemma}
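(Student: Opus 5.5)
The plan is to fix $t\in[0,1]$ and argue by subsequences. If $t\in I_{\infty,0}$, the claim is already contained in Lemma \ref{lem:truncationIinfinity}, since $SBV^2$-convergence of $z_n(t)$ includes weak $L^2$-convergence of the gradients. So let $t\notin I_{\infty,0}$.

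\textbf{Step 1: compactness.} Exactly as in the proof of Lemma \ref{lem:truncationIinfinity}, the sequence $(z_n(t))_n$ is bounded in $SBV^2(\Omega')$:
\begin{itemize}
\item the $L^\infty$-bound comes from \eqref{eq:energybound};
\item the gradient bound comes from $\Gamma_n(t)\subset Q_n(t)$ together with \eqref{limitinggamma};
\item the jump bound comes from $J_{z_n(t)}\subset\partial Q_n(t)$ and \eqref{eq:jumpsetestimate}.
\end{itemize}
By Theorem \ref{theorem:sbvcompactness}, every subsequence has a further subsequence with $z_n(t)\to w$ in $SBV^2(\Omega')$ for some $w$ satisfying $w=g(t)$ on $\Omega'\setminus\overline\Omega$. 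The latter holds because $z_n(t)$ differs from $u_n(t)$ only on $Q_n(t)$, whose measure vanishes by \eqref{eq:triangleentozero}. By the usual subsequence principle it then suffices to show $\nabla w=\nabla u(t)$ on $\Omega$.

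\textbf{Step 2: identification by uniqueness.} I would use the Francfort--Larsen uniqueness argument. The functional $v\mapsto\int_\Omega|\nabla v|^2\,\derx$ is strictly convex in $\nabla v$ on the convex class $AD(g(t),\Gamma(t))$. By Theorem \ref{theorem:continuationtheorem}(i),(iii), $u(t)$ belongs to this class and minimizes the functional there, since any competitor $\bar u$ with $J_{\bar u}\,\tilde\subset\,\Gamma(t)$ has no extra surface term in \eqref{eq:minimalityentiretimeinterval}. Hence it is enough to prove two facts:
\begin{enumerate}
\item[(a)] $J_w\,\tilde\subset\,\Gamma(t)$;
\item[(b)] $\int_\Omega|\nabla w|^2\,\derx\le\int_\Omega|\nabla\phi|^2\,\derx$ for all $\phi\in AD(g(t),\Gamma(t))$.
\end{enumerate}
Given (a) and (b), both $w$ and $u(t)$ are minimizers in the same convex class. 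Testing with $\tfrac12(w+u(t))$, which is admissible, strict convexity forces $\nabla w=\nabla u(t)$.

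\textbf{Step 3: proof of (b).} For (b), I would rerun the jump-transfer proof of Theorem \ref{theorem:stabilityresulteps} (Section \ref{section:stabilityresult}) at the time $t$. That proof only uses three ingredients:
\begin{itemize}
\item the $\varepsilon$-level stability \eqref{eq:3.3} of $u_n(t)$;
\item the $SBV^2$-convergence of the cut-off functions;
\item the fact that the cracks $\{\gamma_n(\tau)\neq0\}$, $\tau\le t$, are contained in $\Gamma_n(t)$.
\end{itemize}
Here the cut-off functions converge to $w$ rather than to $u(t)$. Combined with Lemma \ref{lem:alternativetruncation}, which says $u_n(s)\mathds 1_{(Q_n(t))^c}\to u(s)$ for $s\in I^t_{\infty,0}$, the same argument yields
\begin{align*}
\int_\Omega|\nabla w|^2\,\derx\le\int_\Omega|\nabla\phi|^2\,\derx+\mathcal H^{d-1}\big(J_\phi\setminus(\Gamma(t)\cup J_w)\big).
\end{align*}
Once (a) is known, this is exactly (b).

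\textbf{Step 4: proof of (a), the main obstacle.} Step (a) is where I expect the real difficulty, since jumps of $w$ are generated by $\partial Q_n(t)$ and are not a priori tied to $\Gamma(t)$. I would exploit that $t\notin\mathcal N_0$, so $\lambda_0$ is continuous at $t$. Take $s_k\in I_{\infty,0}$ with $s_k\nearrow t$.
\begin{itemize}
\item First, use the localized $\Gamma$-liminf inequality \eqref{gammaV} for the pair $(u_n(t),\gamma_n(t))$ together with the disjoint-open-sets argument of Lemma \ref{lemma:firstsummary}. This should give $\mathcal H^{d-1}\big(J_w\cup\Gamma(s_k)\big)\le\lambda_0(t)$.
\item The harder part is the matching bound $\mathcal H^{d-1}(\Gamma(s_k))\ge\lambda_0(s_k)-o(1)$, i.e.\ no surface energy is lost at the times $s_k$.
\item I would try to extract this from the energy balance, comparing \eqref{eq:energyequalityentiretimeinterval} with \eqref{eq:3.4} and energy convergence at $s_k\in I_{\infty,0}$. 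If that is not directly available at this stage, the fallback is a second jump-transfer argument showing that $\mathcal H^{d-1}(J_w\setminus\Gamma(s_k))$ is controlled by $\lambda_0(t)-\lambda_0(s_k)$.
\end{itemize}
Either way, letting $k\to\infty$ and using continuity of $\lambda_0$ at $t$ gives $\mathcal H^{d-1}(J_w\setminus\Gamma(t))=0$, which is (a). This concludes the argument.
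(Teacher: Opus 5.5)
\textbf{The gap is in Step 4.} Steps 1--3 are reasonable in outline. Step 3 is a full re-run of the jump transfer of Section~\ref{section:stabilityresult} at a time $t\notin I_{\infty,0}$, with extra transfer onto the cracks of earlier times; that is heavy but plausible. The genuine gap is Step 4, the inclusion $J_w\,\tilde{\subset}\,\Gamma(t)$, on which your uniqueness argument depends.

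Lower semicontinuity only gives $\mathcal{H}^{d-1}(J_w\cup\Gamma(s_k))\le\lambda_0(t)$. To conclude you need $\lambda_0(s_k)\le\mathcal{H}^{d-1}(\Gamma(s_k))+o(1)$, i.e.\ that no surface energy is lost at $s_k$. That is item (iii) of Theorem~\ref{theorem:griffithmin}, and the paper obtains it from energy convergence (ii).

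Energy convergence rests on the identification \eqref{analog2} of the limit of the work term $\int_0^{s}\int_\Omega(\nabla u_n-\gamma_n)\cdot\partial_t\nabla g_n$. That identification requires $\nabla z_n(\tau)\rightharpoonup\nabla u(\tau)$ for a.e.\ $\tau$. Since $I_{\infty,0}$ is countable, almost every $\tau$ lies outside $I_{\infty,0}$, which is exactly the case of the lemma you are proving. At $s_k\in I_{\infty,0}$ alone you only have lower-semicontinuity bounds, and \eqref{eq:energyequalityentiretimeinterval} points the wrong way for an upper bound on $\lim_n\mathcal{E}_n$. So the energy-balance route is circular.

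The fallback is not specified. A jump transfer constructs competitors, so it bounds the elastic energy of the limit from above. It gives no upper bound on the jump set of $w$.

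\textbf{The missing idea.} Use continuity of $\lambda_0$ at $t$ quantitatively at the $\varepsilon_n$-level. This makes (a), (b), and the uniqueness argument unnecessary.

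Fix $s\in I_{\infty,0}$ with $s<t$. Let $w_n(s,t)$ minimize $\int_{\Omega\setminus\Gamma_n(t)}|\nabla z|^2$ among $z\in V_{h_n}(\Omega')$ with $z=g_n(s)$ on $\Omega'\setminus\overline{\Omega}$.

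First, by \eqref{eq:3.3} at time $t$, $u_n(t)-w_n(s,t)$ solves the same problem with data $g_n(t)-g_n(s)$. Hence $\int_{\Omega\setminus\Gamma_n(t)}|\nabla u_n(t)-\nabla w_n(s,t)|^2\,{\rm d}x\le\|\nabla g_n(t)-\nabla g_n(s)\|_{L^2(\Omega)}^2$.

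Second, test \eqref{eq:3.3} at time $s$ with $(w_n(s,t),\nabla w_n(s,t)\mathds{1}_{\Gamma_n(t)})$. Use $\Gamma_n(s)\subset\Gamma_n(t)$ and the orthogonality $\int_{\Omega\setminus\Gamma_n(t)}\nabla w_n(s,t)\cdot\nabla(u_n(s)-w_n(s,t))\,{\rm d}x=0$. This gives $\int_{\Omega\setminus\Gamma_n(t)}|\nabla u_n(s)-\nabla w_n(s,t)|^2\,{\rm d}x\le\lambda_n(t)-\lambda_n(s)$.

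Since $(Q_n(t))^c\subset(\Gamma_n(t))^c$, the two bounds combine to
\[
\limsup_{n\to\infty}\big\|\nabla z_n(t)-\nabla u_n(s)\mathds{1}_{(Q_n(t))^c}\big\|_{L^2(\Omega)}^2\le C\|\nabla g(t)-\nabla g(s)\|^2_{L^2(\Omega)}+C\big(\lambda_0(t)-\lambda_0(s)\big).
\]
The right-hand side vanishes as $s\nearrow t$ because $t\notin\mathcal{N}_0$. Together with Lemma~\ref{lem:alternativetruncation} and the strong left continuity of $\nabla u$ at $t$ from Theorem~\ref{theorem:continuationtheorem}, this identifies every weak limit of $\nabla z_n(t)$ as $\nabla u(t)$.

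The inclusion $J_w\,\tilde{\subset}\,\Gamma(t)$ you were after is then a consequence of the lemma, not an input to it. This is how it is derived in the proof of Theorem~\ref{theorem:griffithmin}(i).
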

 The \EEE proof essentially follows the lines   of \EEE the proof in \cite[Proposition 5.10]{Giacomini2005}, by combining the minimality property in   Theorem \ref{theorem:stabilityresulteps} with \EEE  Lemma \ref{lem:alternativetruncation} and Theorem~\ref{theorem:continuationtheorem}. We defer it to  Appendix \ref{sec:appendix}.  We are now able to conclude the proof of Theorem \ref{theorem:griffithmin}.
\begin{proof}[Proof of Theorem \ref{theorem:griffithmin}]
Let  $\varepsilon_n\to 0$ be the sequence  given by Lemma \ref{lem:truncationIinfinity}, \EEE and define the  corresponding sequence $h_n := h(\eps_n)\to 0$.  
In view of the results given in Lemma \ref{lemma:firstsummary} and Theorem \ref{theorem:continuationtheorem},  we see that $t \mapsto (u(t),\Gamma(t))$ is a quasi-static  crack evolution in the sense of Definition~\ref{main def-lim}, except for one inequality in the energy balance. Thus, to conclude it remains to prove 
\begin{align}\label{tooopro}
       \mathcal{E}(t)  \le  \mathcal{E}(0) + 2\int_0^t\int_{\Omega}\nabla u(s)\cdot \partial_t\nabla  g(s)\,\derx\,\ders,
\end{align}
and \EEE the convergence  properties stated in Theorem \ref{theorem:griffithmin}.

 We defer the proof of (i) to the end and start with  \EEE \eqref{tooopro} and (ii). \EEE As $   \mathcal{E}_{n} \EEE (\cdot,\cdot, \emptyset)$ defined in  \eqref{eq:energydefinitionepsilon-new}  $\Gamma$-converges to $E$ defined in \eqref{limitgama}, see \cite[Theorem 5.1]{SchmidtFraternaliOrtiz2009} and the subsequent remark therein on the incorporation of boundary conditions, the minimality properties given in Theorem \ref{theorem:approx}(i) and Definition \ref{main def-lim}(i) imply that $ \lim_{n\to\infty}  \E_{n} \EEE (u_n(0), \gamma_n(0), \emptyset) = E(u(0))$. Using the definition of $\Gamma_n(0)$, $\Gamma(0)$, and $ \mathcal{E}(0)$ in \eqref{eq:energydefinition}, this yields
   \begin{align}
        \label{eq:energyconvergencetimezero}
        \lim_{n\to\infty}  \E_{n} \EEE (u_n(0), \gamma_n(0), \Gamma_n(0))=\mathcal{E}(0).
    \end{align}
For $s\in [0, 1]$,     since $\gamma_n(s)\equiv0$ on $\Omega\setminus\Gamma_n(s)$ and $\gamma_n(s)=\nabla u_n(s)$ on $\Gamma_n(s)$, see \eqref{limitinggamma}, \EEE we can split the energy into the two parts
\begin{equation}
    \begin{aligned}\label{eineneuenummer}
        &\int_{\Omega}(\nabla u_n(s)-\gamma_n(s)) \cdot \partial_t\nabla  g_n(s)\,\derx \\=&\int_{\Omega \setminus Q_n(s)}(\nabla u_n(s)-\gamma_n(s))\cdot  \partial_t\nabla  g_n(s)\,\derx + \int_{Q_n(s)}(\nabla u_n(s)-\gamma_n(s))\cdot\partial_t\nabla  g_n(s)\,\derx  \\
        =\,&\int_{\Omega \setminus Q_n(s)}\nabla u_n(s)\cdot \partial_t\nabla  g_n(s)\,\derx +\int_{Q_n(s)\setminus \Gamma_n(s)}\nabla u_n(s)\cdot \partial_t\nabla  g_n(s) \,\derx,
    \end{aligned}
\end{equation}
   where \EEE we used that $\Gamma_n(s) \subset Q_n(s)$. \EEE We want to show that the second term vanishes for $n\to\infty$. By Hölder's inequality and \eqref{eq:energybound} it holds that 
    \begin{align*}
        \left|\int_{Q_n(s)\setminus \Gamma_n(s)}\nabla u_n(s) \cdot\partial_t\nabla  g_n(s) \,\derx\right|
        \leq C_1^\frac{1}{2}\left(\int_{Q_n(s)\setminus\Gamma_n(s)}|\partial_t\nabla  g_n(s)|^2\,\derx\right)^\frac{1}{2}.
    \end{align*}
 After \EEE taking the integral in time, we see that 
    \begin{align*}
        \left|\int_0^t\int_{Q_n(s)\setminus \Gamma_n(s)}\nabla u_n(s)\cdot \partial_t\nabla g_n(s)\,\derx\,\ders\right|  \le \EEE  C\int_0^t \left(\int_{Q_n(s)}|\partial_t\nabla  g_n \EEE (s)|^2\,\derx\right)^\frac{1}{2}\,\ders
    \end{align*}
     vanishes for $n \to \infty$ due to the regularity properties of $g$ and $\mathcal{L}^d(Q_n(s))\to 0$,  see \EEE \eqref{eq:triangleentozero}. 
  
     Recall the definition in \eqref{zn}. \EEE As $\nabla z_n(s) \rightharpoonup \nabla u(s) \text{ weakly in }L^2(\Omega; \R^d)$ for $s\in [0, 1]$ by Lemma~\ref{lem:convergenceoutsideIinfinity}, and $\partial_t\nabla  g_n(s) \to \partial_t\nabla  g(s)$ strongly in $L^2(\Omega; \R^d)$ for a.e.\ $s\in [0, 1]$,  we deduce   by  \eqref{eq:triangleentozero} \EEE
    \begin{align*}
        \lim_{n\to\infty}\int_{\Omega\setminus Q_n(s)} \nabla u_n(s) \EEE  \cdot \partial_t\nabla  g_n(s)\,\derx = \int_{\Omega}\nabla u(s)\cdot \partial_t\nabla  g(s)\,\derx
    \end{align*}
 for a.e.\ $s \in [0,1]$. By \eqref{eineneuenummer}, \EEE \eqref{eq:energybound}, and the dominated convergence theorem we thus get, for all $t \in [0,1]$, 
    \begin{align}\label{analog2}
        \lim_{n\to\infty} \int_0^t \int_{\Omega}(\nabla u_n(s)-\gamma_n(s)) \cdot \partial_t\nabla  g_n(s)\,\derx \, {\rm d}s=  \int_0^t \int_{\Omega}\nabla u(s)\cdot \partial_t\nabla  g(s)\,\derx\, {\rm d}s.
    \end{align} 
  This combined with Lemma \ref{lem:bothinequalitesdisc} and \EEE \eqref{eq:energyconvergencetimezero} implies
    \begin{align}\label{lowi}
        \lim_{n\to\infty}  \mathcal{E}_{n}(u_n(t), \gamma_n(t), \Gamma_n(t)) =
        \,& \,\mathcal{E}(0)+2\int_0^t\int_\Omega \nabla u(s)\cdot \partial_t\nabla  g(s)\,\derx\,\ders \quad \text{ for all $t \in [0,1]$.} 
    \end{align}
 Then, using    $\nabla z_n(t) \rightharpoonup \nabla u(t)$ weakly in $L^2(\Omega; \R^d)$  for every $t \in [0, 1]$ (see Lemma \ref{lem:convergenceoutsideIinfinity}) \EEE and Lemma \ref{lemma:firstsummary} we get   
    \begin{align}\label{lsc}
  \mathcal{E}(t)   \le       \liminf_{n\to\infty}  \mathcal{E}_{n}(u_n(t), \gamma_n(t), \Gamma_n(t)) =
        \,& \,\mathcal{E}(0)+2\int_0^t\int_\Omega \nabla u(s)\cdot \partial_t\nabla  g(s)\,\derx\,\ders \quad \text{ for all $t \in  [0, 1]$.} 
    \end{align}    
This shows \eqref{tooopro}. \EEE Now,   \eqref{energybalance} along with \eqref{lowi} shows the energy convergence stated in (ii). \EEE 

Concerning (iii), with \eqref{zn} and  Lemma \ref{lem:convergenceoutsideIinfinity} we get
    \begin{align*}
        \liminf_{n\to\infty}\intnormal{u_n(s)-\gamma_n(s)}\geq \liminf_{n\to\infty}\int_\Omega|\nabla z_n(s)|^2\,\derx\geq \intnormal{u(s)}
    \end{align*}
    for every $s\in   [0, 1] \EEE$ by a lower-semicontinuity argument. \EEE  Additionally, \eqref{eq:lambdainequalityentiretimeinterval}  and the definition  in \eqref{eq:lambdaepsilondefinition} give \EEE    
    \begin{align*}
        \liminf_{n\to\infty}
            \frac{1}{2\varepsilon_n}\L^d\big(\Uhnen(\Gamma_n(s))\big) \geq \mathcal{H}^{d-1}(\Gamma(s)). 
    \end{align*}
 As the sum of the left-hand sides is equal to the sum of the right-hand sides by (ii), both inequalities are actually equalities. Thus, the statement of (iii) holds for all $s\in  [0,1] \EEE$. 
 We continue with proving \EEE (iv). Considering any open set $V \subset \Omega'$ and $s \in [0,1]$, and repeating the reasoning in the proof of Lemma \ref{lemma:firstsummary} (see \eqref{gammaV}) on $V$, we get
\begin{align}\label{measure}
 \liminf_{n\to\infty} \frac{1}{2\varepsilon_n}\L^d\big(U^{\mathcal{T}}_n(\Gamma_n(t)) \cap V\big)  \ge \mathcal{H}^{d-1} ( \Gamma(s) \cap V). 
 \end{align}
Given any $A \subset \Omega'$ with $\mathcal{H}^{d-1}(\partial A \cap \Gamma(s)) = 0$, by applying \eqref{measure} for $V = A$ and $V = \Omega' \setminus \overline{A}$ and using the convergence of the surface energy given in (iii) we conclude 
\begin{align*}
 \frac{1}{2\varepsilon_n}\L^d\big(U^{\mathcal{T}}_n(\Gamma_n(t)) \cap A\big)  = \mathcal{H}^{d-1} ( \Gamma(s) \cap A). 
 \end{align*}
The fact that this holds for all $A$ with $\mathcal{H}^{d-1}(\partial A \cap \Gamma(s)) = 0$ shows (iv).  \EEE 

It remains to prove (i). \EEE For that matter, we first note that by definition $\H^{d-1}(\Gamma(t))$ can only be discontinuous for $t\in I_{\infty, 0}$. \EEE
Now we repeat the proof of Lemma \ref{lem:truncationIinfinity} for  given $t \in [0,1] \setminus I_{\infty,0}$ to find $\hat{u}(t) \in SBV^2(\Omega')$  such that $u_n(t)  \to \hat{u}(t)$ in $L^1(\Omega')$ and  $z_n(t)   \to \hat{u}(t)$  in $SBV^2(\Omega')$ along a not relabeled subsequence depending on $t$. Note that $\hat{u}(t) = g(t)$ on $\Omega' \setminus \overline{\Omega}$. Additionally, for $\tau\in I_{\infty, 0}$, $\tau<t$, repeating the  lower-semicontinuity argument in the proof of Lemma~\ref{lemma:firstsummary}, we get 
\begin{align*}
    \H^{d-1}(J_{\hat{u}(t)}\cup \Gamma(\tau))\leq \liminf_{n\to \infty} \frac{1}{2\varepsilon_n}\L^d\big(U^\T_n(\{\gamma_n(t)\neq 0\}\cup \Gamma_n(\tau))\big) \leq  \frac{1}{2\varepsilon_n}\liminf_{n\to \infty} \L^d\big(U^\T_n(\Gamma_n(t))\big),
\end{align*}
where  we used  $\{\gamma_n(t)\neq 0\}\cup \EEE  \Gamma_n(\tau)\subset \Gamma_n(t)$. \EEE Then, continuity of $\mathcal{H}^{d-1}(\EEE\Gamma(\cdot))$ at $t$ leads to  
\begin{align*}
    \H^{d-1}(J_{\hat{u}(t)}\cup \Gamma(t)) \leq  \frac{1}{2\varepsilon_n}\liminf_{n\to \infty} \L^d\big(U^\T_n(\Gamma_n(t))\big).
\end{align*} \EEE
This together with (iii) yields $J_{\hat{u}(t)}\subset \Gamma(t)$ up to a $\H^{d-1}$-null set. It \EEE  suffices to show that $u(t)= \hat{u}(t)$ on $G(t)$ as then the statement follows from Urysohn's principle. To see this, we observe that by Lemma \ref{lem:convergenceoutsideIinfinity}
we have  $\nabla z_n(t) \rightharpoonup\nabla u(t)$ weakly in $L^2(\Omega;\R^d)$ which \EEE along with $z_n(t)   \to \hat{u}(t)$  in $SBV^2(\Omega')$ shows that $\nabla u(t) = \nabla \hat{u}(t)$ on $\Omega'$. This yields that $u(t) - \hat{u}(t)$ is a piecewise constant function whose jump set is contained in   $\Gamma(t)$. 
As $u(t) - \hat{u}(t) = 0$ on  $\Omega' \setminus \overline{\Omega}$, we get $u(t) - \hat{u}(t) = 0$ on  $G(t)$ by the definition of $G(t)$, see \cite[Proof of Theorem 2.2, Step 2]{FriedrichSteinkeStinson2025} for the precise argument.  \EEE 
\end{proof}

%
%
%
%

\section{The stability result}
\label{section:stabilityresult}

This section is devoted to the proof of Theorem \ref{theorem:stabilityresulteps}. The key method is a \textit{jump set transfer} in the spirit of \EEE \cite{FrancfortLarsen2003}. 
\paragraph{\textbf{A density result}} We start by observing that it suffices to prove the statement for functions $\phi$ \EEE with more regularity, employing a suitable density argument. Let $\mathcal{W}(\Omega')\subset SBV(\Omega')$ be the set of functions $v$ with closed jump set $J_v$, which is included in a finite union of closed and connected pieces of $C^1$-hypersurfaces, and with $v\in W^{2, \infty}(\Omega' \EEE \setminus J_v)$.   For each $v \in SBV^{2}(\Omega')$ with $v = g(t)$ on $\Omega'  \setminus \overline{\Omega}$ we can choose a sequence $(v_k)_k \subset\mathcal{W}(\Omega') $ with   $v_k = g(t)$  on \EEE  $\Omega'  \setminus \overline{\Omega}$   such that 
    \begin{align}\label{densiiit}
        & v_k\to v \text{ strongly in }L^1(\Omega), \notag \\
        & \nabla v_k\to \nabla v \text{ strongly in }L^2(\Omega; \R^{ d}), \notag \\
        & \limsup_{k\to\infty} \mathcal{H}^{d-1}(J_{v_k}\triangle J_v)=0.
    \end{align}
This follows as explained in \cite[Beginning of Section 5]{FriedrichSeutter2025}, see also  \cite[Theorem 3.2]{FriedrichSteinkeStinson2025} for an analogous statement and proof for $GSBD$-functions.

Equipped with this result, it suffices to construct a recovery sequence for $\phi\in \mathcal{W}(\Omega')$ with $\phi=g(t)$ on $\Omega' \setminus \overline{\Omega}$. Furthermore, fixing  $\theta>0$, we only need to find a sequence of pairs $(\phi_n, \gamma^\phi_n)\in V\hn(\Omega')\times W\hn(\Omega')$ with $\phi_n= g_n \EEE (t)$ and $\gamma^\phi_n=0$ on $\Omega' \setminus \overline{\Omega}$ such that   
\begin{equation}\label{only thing to check}
    \begin{aligned}
        &\lim_{n\to\infty}\|(\nabla \phi_n-\gamma^\phi_n)-\nabla\phi\|_{L^2(\Omega)}\leq C\theta, \\
        &\limsup_{n\to\infty}\frac{1}{2\varepsilon_n}\L^d\left(\hen{\gamma^\phi_n}\setminus   U^\T_n\EEE(\bruchdiscepsilonn(t))\right)\leq \mathcal{H}^{d-1}(J_\phi\setminus J_{u(t)})+C\theta.
    \end{aligned}
\end{equation}
Then, the statement in \eqref{eq:stabilityresulteps} follows by a suitable diagonal argument, sending $k \to \infty$ (for a sequence as in \eqref{densiiit}) and $\theta \to 0$, by using also the minimality \eqref{eq:3.3} in the time-discrete problems. More precisely, subtracting the surface term, \eqref{eq:3.3} implies
$$\int_\Omega |\nabla u_n(t)-\gamma_n(t)|^2\,\derx \le \int_\Omega |\nabla \phi_n-\gamma^\phi_n|^2\,\derx  + \frac{1}{2\varepsilon_n}\L^d\left(\hen{\gamma^\phi_n}\setminus   U^\T_n\EEE(\bruchdiscepsilonn(t))\right),$$
and then we also use \EEE lower semicontinuity for the elastic energy based on \eqref{eq:truncationIinfinityconvergence}. \EEE

   From now on,  fix $\phi\in \mathcal{W}(\Omega')$ with $\phi=g(t)$ on $ \Omega' \setminus \overline{\Omega} \EEE $.    For simplicity we only treat the case that $\mathcal{H}^1(J_\phi \cap \partial_D \Omega)=0$ (no jump along the boundary), for the general case follows by minor adaptations of the construction at the boundary (see \cite{FrancfortLarsen2003}) which would merely overburden notation in the sequel.

\EEE  

\paragraph{\textbf{Besicovitch covering}} For \EEE the construction of $(\phi_n)_n$, the idea is  \EEE to transfer the jump set of $\phi$ onto the one of $u_n(t)\mathds{1}_{\bruchdiscepsilonn(t)}$.  The first step is to cover the jump set of $\phi$ with  by following the procedure in \cite[Section 2]{FrancfortLarsen2003}.

We define $E_t$ to be the set of all Lebesgue-density 1 points for $\{x\in \Omega:u(x) > t \EEE \}$ with $u:= u(t)$ and $E^n_t$ to be the set of all Lebesgue-density 1 points for $\{x\in \Omega:y_n(x) > t \EEE \}$,  where \EEE  $y_n:= u_n(t)\mathds{1}_{ (\bruchdiscepsilonn(t))^c}$. We can choose a subset $G_j\subset J_u$ like in \cite[Equation (2.2)]{FrancfortLarsen2003},  consisting of the points with jump height larger than $1/j$, \EEE such that 
\begin{align*}
    \H^{d-1}(J_u\setminus G_j)\leq \theta.
\end{align*}
For every $x\in G_j\cap J_\phi$, we consider closed cubes $Q_r(x)$ with sidelength $2r$ and  two faces normal to the unit normal vector $\nu_\phi(x)$ at $J_\phi$.  For $\H^{d-1}$-a.e.\ $x\in J_\phi\setminus J_u$, we also consider such cubes $Q_r(x)$, \EEE for $r$ sufficiently small such that
\begin{align}
    \label{eq:globalestimateJucapQrinGphijwithoutJu}
    \H^{d-1}(J_u\cap Q_r(x))\leq \theta r^{d-1}.
\end{align}
Here we use that  $J_u$ has   $\mathcal{H}^{d-1}$-density  zero  $\H^{d-1}$-a.e.\ \EEE in $J_\phi\setminus J_u$. Since $J_\phi$ is contained in a finite union of closed $C^1$-hypersurfaces, for  $\H^{d-1}$-a.e.\ \EEE $x\in J_\phi$ we can choose the   above cubes \EEE such that, possibly upon  passing \EEE to a smaller $r$,  we have \EEE
\begin{align}
    \label{eq:propertiesforcubesforu}
    r^{d-1}&\leq  2 \EEE \H^{d-1} (J_\phi\cap Q_r(x)) \\
    \label{eq:continuityofnormal}
    J_\phi\cap Q_{ r}(x)&\subset \{y \in  \R^d \colon \, \EEE |(y-x)\cdot \nu_\phi(x)|\leq\theta r\}.
\end{align}
With this construction, we obtain a fine cover of $\Xi:= (G_j\cap J_\phi)\cup (J_\phi\setminus J_u)$, to which we can apply the Besicovitch covering theorem with respect to the Radon measure $\L^d+\H^{d-1}|_{ J_u}$. For $\theta>0$, we can therefore find a finite and disjoint subcollection $\mathcal{B}:= (Q_{r_i}(x_i))_i$, or short $(Q_i)_i$, such that the cubes satisfy the above conditions, as well as 
\begin{align}
    \label{eq:propertiesforcubes}
    \L^d\left({\bigcup}_{\mathcal{B}}Q_i\right)\leq  \theta^2, \EEE \quad \H^{d-1}\big(\Xi\setminus {\bigcup}_\mathcal{B}Q_i\big)\leq \theta.
\end{align}
Here and in the following, we use $\bigcup_\mathcal{B}Q_i$ as a shorthand notation for $\bigcup_{Q_{r_i}(x_i)\in \mathcal{B}}Q_{r_i}(x_i)$. 
By H\"older's inequality and \eqref{eq:globalestimateJucapQrinGphijwithoutJu} \EEE this implies
\begin{align}
        \label{eq:propertiesbesicovitchcovering}
    \int_{{\bigcup}_\mathcal{B}Q_i}|\nabla u| \,\derx \leq C\theta, \quad  \quad \H^{d-1}\big(J_\phi\setminus {\bigcup}_\mathcal{B}Q_i\big)\leq 2\theta.
\end{align}
Without further notice, we will \EEE frequently use the fact that the  cubes \EEE are pairwise disjoint. By $\mathcal{B}_{\text{good}}\subset \mathcal{B}$ we denote the collection of cubes \EEE $Q_i$ with $x_i\in J_\phi\setminus J_u$ and similarly we let $\mathcal{B}_\text{bad}\subset \mathcal{B}$ be the collection of all cubes  \EEE $Q_i$ with $x_i\in J_\phi \cap G_j$. We \EEE have $\mathcal{B} = \mathcal{B}_\text{good}\cup \mathcal{B}_\text{bad}$,  and  we also define the set \EEE 
\begin{align*}
 B_\text{bad}={\bigcup}_{\mathcal{B}_\text{bad}}Q_i.
\end{align*}

\EEE

\paragraph{\textbf{Good cubes}} On the good cubes,  by \eqref{eq:globalestimateJucapQrinGphijwithoutJu}--\eqref{eq:propertiesforcubesforu}  we get \EEE 
\begin{align}
    \label{eq:goodcubesestimate}
    {\bigcup}_{ Q_i \EEE  \in \mathcal{Q}_\text{good}}\mathcal{H}^{d-1}(J_u\cap Q_i)\leq \theta \mathcal{H}^{d-1}(J_\phi)\leq C\theta. 
\end{align}
For convenience but with a slight abuse of notation, from now on we will only consider bad cubes whenever taking sums or unions. In particular, we write $\bigcup_i Q_i = B_\text{bad}$. \EEE

\paragraph{\textbf{Bad cubes: jump transfer}}  Recall that $J_{y_n} \subset \partial \Gamma_n(t)$. \EEE Using the methods in \cite[Proof of Theorem~2.1]{FrancfortLarsen2003}, we can choose $t_i\in \R$  (depending on $n$) \EEE such that   
\begin{align}
    \label{eq:EntiwithoutJun}
    \sum\nolimits_i\mathcal{H}^{d-1}\big((\partial^*E^n_{t_i}\cap Q_i)\setminus J_{y_n}\big)\le \theta^{d+1}
\end{align}
for all $n \in \N$, \EEE see \cite[Equation (2.7)]{FrancfortLarsen2003} for details. This can be done in such a way that (up to a sign) $\nu(x_i) := \nu_\phi(x_i)$ is the  outward normal of $\partial^* E_{t_i}$ at $x$. \EEE Setting $H_i(s):= \{y\in Q_i:(y-x_i)\cdot \nu(x_i)=s\}$, one can choose $s_{n, i}^{\pm}$ such that $H_{n, i}^\pm:= H_{i}(s_{n, i}^\pm)$ satisfies
\begin{align}
    \begin{cases}
        \label{eq:estimatesnegativerectangle}
        \mathcal{H}^{d-1}(H^-_{n,i}\setminus E^n_{t_i}) + \mathcal{H}^{d-1}(H^-_{n,i}\setminus E_{t_i})\le 8 \theta^{d+1} (2r_i)^{d-1} \\
               \mathcal{H}^{d-1}(H^+_{n,i}\cap E^n_{t_i}) +   \mathcal{H}^{d-1}(H^+_{n,i}\cap E_{t_i}) \le 8 \theta^{d+1} (2r_i)^{d-1} \\
             \text{dist}(H^\pm_{n,i}, H_i)\in [\frac{\theta}{2}r_i, \theta r_i], 
    \end{cases}
\end{align}
see \cite[Equation (2.10)--(2.11)]{FrancfortLarsen2003} for details. ($\pm$ stands as a placeholder for $+$ and $-$.)  We denote the rectangle between $H_{n, i}^-$ and $H_{n, i}^+$ inside of $Q_i$ by $R_{n, i}$.  

Note that  by \eqref{eq:continuityofnormal} we get
\begin{align}
    \label{eq:G_iwithoutR_i}
    \H^{d-1}(G_j\setminus R_{n, i} \EEE )=0.
\end{align}
The basic idea in \cite{FrancfortLarsen2003} consists in transferring the jump of $\phi$ onto $\partial^*E^n_{t_i}$, which by \eqref{eq:EntiwithoutJun} lies almost entirely in $J_{y_n}$. It also cuts off $\partial^*E^n_{t_i}$ at $R_{n, i}$, such that the only jump within $Q_i$ occurs in $R_{n, i}$. We will   alter some arguments in their produce to suitable adapt to the eigenfracture setting.   

The first aspect in this direction is that, although  $\mathcal{H}^{d-1}(\bigcup_i \partial^* E^n_{t_i}\cap Q_i\setminus J_{y_n})$ \EEE is small thanks to \eqref{eq:EntiwithoutJun}, its neighborhood, being a relevant quantity in the energy \eqref{eq:energydefinitionepsilon-new}, could be large. To solve \EEE this problem, we introduce a suitable notion of separation. For \EEE a fixed cube $Q_i$, we define the \emph{precrack} $\Psi^n_{i, \text{pre}}:= J_{y_n}\cap Q_i$. Let $D^+_{n, i}$ and $D^-_{n, i}$ be the two connected components of $Q_i \setminus R_{n, i}$.  We say that a closed set $\Phi \subset Q_i$ with $\mathcal{H}^{d-1}(\Phi) < + \infty $ is \emph{separating} if $D^+_{n, i}$ and $D^-_{n, i}$ are contained in two different connected components of $Q_i \setminus   \Phi  $.  

Let $\Phi^n_i \subset Q_i$ be such that $\Phi^n_i \cup \Psi^n_{i, \text{pre}}$ is separating. We suppose that $\Phi^n_i$ is chosen minimal in the sense that 
\begin{align}\label{Psi}
\mathcal{H}^{d-1}(\Phi^n_i) \le \mathcal{H}^{d-1}(\Phi') \quad \text{for all $\Phi' \subset Q_i \EEE $ such that $\Phi' \cup  \Psi^n_{i, {\rm pre}} \EEE $ is separating}.  
\end{align}
See  Figure~\ref{figur1}  for an illustration in 2d and Figure \ref{figure:badnose}  for an explanation why we consider a separation in this sense. \EEE Note that $\partial^* ((E^n_{t_i}\cup D^-_{n, i})\setminus D^+_{n, i})$  
\EEE is separating with 
$$\mathcal{H}^{d-1}\big( \partial^* ((E^n_{t_i}\cup D^-_{n, i})\setminus D^+_{n, i}) \EEE \setminus J_{y_n}\big)\leq \mathcal{H}^{d-1}((\partial^*E^n_{t_i}\cap Q_i)\setminus J_{y_n})+\mathcal{H}^{d-1}(H_i^-\setminus E^n_{t_i})+\mathcal{H}^{d-1}(H_i^+\cap \EEE E^n_{t_i}).$$ 
Thus, in view of \eqref{Psi}, by summing  over all $i$ (meaning all bad cubes) we get  
\begin{equation}
    \begin{aligned}
           \sum_i\mathcal{H}^{d-1}( \Phi^n_i )\leq \,& \sum_i\mathcal{H}^{d-1}\big((\partial^* ((E^n_{t_i}\cup D^+_{n, i})\setminus D^-_{n, i}))\setminus J_{y_n}\big)  \leq C \theta^{d+1}, \EEE  
    \end{aligned}
    \label{psiestimate}
\end{equation}
where we have used \eqref{eq:propertiesforcubesforu}, \eqref{eq:EntiwithoutJun}, and the first and second \EEE inequality in \eqref{eq:estimatesnegativerectangle}. Here, the constant $C$ depends also on $\mathcal{H}^{d-1}(J_u)$.

%

\paragraph{\textbf{Definition of the competitors.}} \EEE
For every cube $Q_i\in \mathcal{Q}_\text{bad}$, we  now define   $ \Psi^n_i:= \Psi^n_{i, \text{pre}}\cup \Phi^n_i$, with $\Phi^n_i$ chosen minimally as above.   As $\Psi^n_i$ splits $Q_i$ into two parts, let us denote the part that contains  $D^\pm_{n, i}$ by $L^\pm_{n, i}$. \EEE Then, we can define the function $\phi'_{n,i}$ \EEE which inside of $Q_i$ exhibits discontinuities only on $\Psi^n_i$ \color{black}. This is achieved by employing the reflection argument in \cite{FrancfortLarsen2003}, see particularly \cite[below Equation (2.14)]{FrancfortLarsen2003}. \EEE We first define $\phi^-_{n, i}:= \phi \chi_{Q^-_{i}\setminus R_{n, i}}$, extended to $R_{n, i}$ by reflection,  where $Q^-_{i} = \{y\in Q_i:(y-x_i)\cdot \nu(x_i) \le 0\}$. \EEE  We proceed similarly for $\phi^+_{n, i}$. 
Now, we define 
\begin{align}
    \phi'_{n, i}:= 
    \begin{cases}
        \phi^-_{n, i} &\text{on } L^-_{n, i}, \\
        \phi^+_{n, i} &\text{on } L^+_{n, i}. \\
    \end{cases}
\end{align}
By this construction we particularly get, for all $Q_i \in \mathcal{Q}_{\rm bad}$, \EEE
\begin{align}\label{eq.bbb}
\Vert \nabla  \phi'_{n,i}\Vert_{L^\infty(Q_i)} & \le C\Vert \nabla \phi \Vert_{L^\infty(\Omega')},\\
J_{\phi'_{n,i}} & \subset \Psi^n_i\cup (\partial Q_i\cap R_{n, i}),\label{eq.bbbq2} \EEE
\end{align}
 where for \eqref{eq.bbbq2} we used \eqref{eq:G_iwithoutR_i}.  We are now ready to define the sequence $\phi_n$ appearing \EEE in \eqref{only thing to check}. First, we define an auxiliary function $\phi'_n$ which is defined as $\phi'_n = \phi'_{n,i}$ in each bad cube  $Q_i\in \mathcal{Q}_{\text{bad}}$ and  $\phi'_n=\phi$ everywhere else. Then, the function   $\phi_n \in V_{h_n}(\Omega')$ denotes the continuous, piecewise affine interpolation of $\phi'_n$ with respect to $\mathcal{T}\hn$. By construction,   we \EEE have $J_{\phi'_n}= J_\phi$ outside of $B_{\rm bad}$,  \EEE whereas inside   $B_{\rm bad}$ \EEE  we have $J_{\phi'_n}\subset \bigcup_i \Psi^n_i\cup (\partial Q_i\cap R_{n, i})$.  

    \newcommand{\triangleobenlinks}[2]{\fill[blue, opacity = 0.4] (#1, #2) -- (#1, #2+0.5) -- (#1+0.5, #2+0.5) -- (#1, #2);}
    \newcommand{\triangleuntenrechts}[2]{\fill[blue, opacity = 0.4] (#1, #2) -- (#1+0.5, #2) -- (#1+0.5, #2+0.5) -- (#1, #2);}
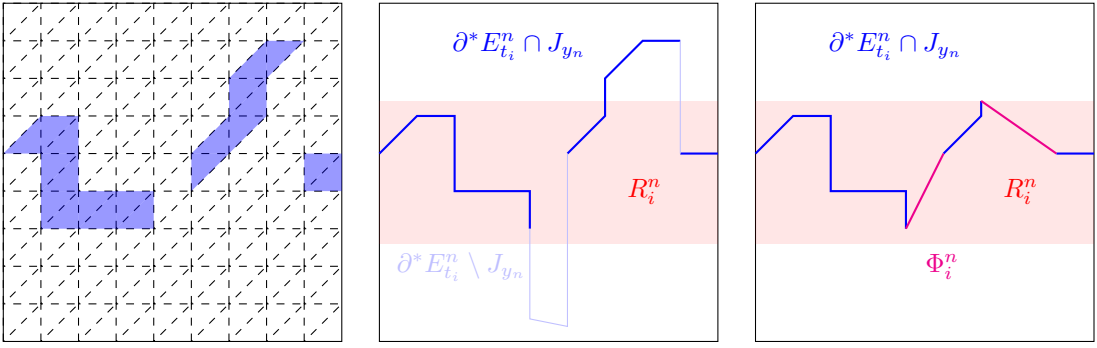
\begin{figure}
\begin{tikzpicture}
    \draw (0, 0) rectangle (4.5, 4.5);
    \foreach \n in {0, ..., 8}
        {\foreach \m in {0, ..., 8}
            {
            \draw[dashed] (0.5*\n, 0.5*\m) -- (0.5*\n, 0.5*\m+0.5) -- (0.5*\n+0.5, 0.5*\m+0.5) -- (0.5*\n, 0.5*\m); 
            }    
        }

    \triangleobenlinks{2.5}{2.5-0.5}
    \triangleobenlinks{3}{3.5-0.5}
    \triangleobenlinks{3}{3-0.5}
    \triangleobenlinks{3.5}{4-0.5}

    \triangleuntenrechts{2.5}{3-0.5}
    \triangleuntenrechts{3}{3.5-0.5}
    \triangleuntenrechts{3}{4-0.5}


    \triangleobenlinks{4}{2.5-0.5}

    \triangleuntenrechts{4}{2.5-0.5}

    \triangleobenlinks{1.5}{2-0.5}
    \triangleobenlinks{1}{2-0.5}
    \triangleobenlinks{0.5}{2-0.5}
    \triangleobenlinks{0.5}{2.5-0.5}
    \triangleobenlinks{0.5}{3-0.5}
    
    \triangleuntenrechts{1.5}{2-0.5}
    \triangleuntenrechts{1}{2-0.5}
    \triangleuntenrechts{0.5}{2-0.5}
    \triangleuntenrechts{0.5}{2.5-0.5}
    \triangleuntenrechts{0.5}{3-0.5}
    \triangleuntenrechts{0}{3-0.5}

    
    \draw (5, 0) rectangle (9.5, 4.5);
    \fill[very nearly transparent, red] (5, 1.3) rectangle (9.5, 3.2);

    \draw[thick, blue] (6-1, 3-0.5) -- (6.5-1, 3.5-0.5) -- (7-1, 3.5-0.5) -- (7-1, 2.5-0.5) -- (8-1, 2.5-0.5) -- (8-1, 2-0.5);
    \draw[thick, blue] (8.5-1, 3-0.5) -- (9-1, 3.5-0.5) -- (9-1, 4-0.5) -- (9.5-1, 4.5-0.5) -- (10-1, 4.5-0.5);
    \draw[thick, blue] (10-1, 3-0.5) -- (10-0.5, 3-0.5) ;

    \draw[blue!25] (8-1, 2-0.5) -- (8-1, 0.3) -- (8.5-1, 0.7-0.5) -- (8.5-1, 3-0.5);
    \draw[blue!25] (10-1, 4.5-0.5) -- (10-1, 3-0.5);


    \draw (10, 0) rectangle (14.5, 4.5);
    \fill[very nearly transparent, red] (10, 1.3) rectangle (14.5, 3.2);

    \draw[thick, blue] (10, 3-0.5) -- (10.5, 3.5-0.5) -- (11, 3.5-0.5) -- (11, 2.5-0.5) -- (12, 2.5-0.5) -- (12, 2-0.5);
    \draw[thick, blue] (12.5, 3-0.5) -- (13, 3.5-0.5) -- (13, 3.2);
    \draw[thick, blue] (14, 3-0.5) -- (14.5, 3-0.5) ;

    \draw[thick, magenta] (12, 2-0.5) -- (12.5, 3-0.5);
    \draw[thick, magenta] (13, 3.2) -- (14, 3-0.5);
    
    \node[circle, label = {0: \color{red} $R^n_i$}] at (8, 2){};
    \node[circle, label = {135: \color{blue} $\partial^* E^n_{t_i}\cap J_{y_n}$}] at (8, 3.5){};
    \node[circle, label = {180: \color{blue!25} $\partial^* E^n_{t_i}\setminus J_{y_n}$}] at (7.25, 1){};

    \node[circle, label = {0: \color{red} $R^n_i$}] at (13, 2){};
    \node[circle, label = {135: \color{blue} $\partial^* E^n_{t_i}\cap J_{y_n}$}] at (13, 3.5){};
    \node[circle, label = {180: \color{magenta} $\Phi^n_i$}] at (13, 1){};




\end{tikzpicture}
\caption{First Picture: $Q_i$ with the blue triangles belonging to $\Gamma_n(t)$. Second picture: a possible $\partial^*E^n_{t_i}$. Third picture: the resulting $\Phi^n_i$.} \label{figur1}
\end{figure}

Our next task is to define a suitable $\gamma^\phi_n$ with the following properties: \EEE The support of $\gamma^\phi_n$  should cover the simplices with high strain induced by the interpolation of   $\phi_n$   but,  at the same time, \EEE   the volume of these simplices should asymptotically still have  the same size   as $\Gamma_n(t)$ \EEE in the limit $n \to \infty$. \EEE

Let us collect all such simplices.   In the bad cubes $Q_i$, all simplices intersecting $\Psi^n_i$ \EEE are collected in $\triangle^{n, {\rm bad}}_i$, and further we collect all simplices intersecting  $\partial Q_i\cap R_{n, i}$  (which are two $(d-1)$-dimensional cuboids in \EEE $\partial Q_i$) by $\triangle^{n, \text{lat}}_i$. We also define $\triangle^{n, {\rm bad}} = \bigcup_i \triangle^{n, {\rm bad}}_i$ and  $\triangle^{n, {\rm lat}} = \bigcup_i \triangle^{n, {\rm lat}}_i$. \EEE  All simplices  intersecting $J_\phi$  which are not contained in $\triangle^{n, {\rm bad}} \cup \triangle^{n, {\rm lat}}$ are collected in the set  $\triangle^{n, \text{good}}$. Therefore, we have 
\begin{align}
    \label{eq:trianglegoodcapQi}
    \triangle^{n, \text{good}}\cap B_{\rm bad} \EEE =\emptyset. 
\end{align}
\EEE 
The challenge is that we have no  immediate control on the simplices $\triangle^{n, {\rm bad}}$, but some delicate covering arguments are necessary to bound their number. \EEE Since our construction also introduces jumps along a part of the lateral boundary $\partial Q_i\cap R_{n, i}$, \EEE   we  will also need a control  on $ \# \triangle^{n, \text{lat}}_i$ to estimate their contribution.  
 
We further define $\triangle^n_i:= \triangle^{n, \text{bad}}_i\cup \triangle^{n, \text{lat}}_i$ \EEE  as well as $\triangle^n:= \triangle^{n, \text{bad}}\cup \triangle^{n, \text{lat}}\cup \triangle^{n, \text{good}}$   and set \EEE 
\begin{align}\label{finallygamma}
\gamma^\phi_n:= \nabla \phi_n\mathds{1}_{\triangle^n}.
\end{align}
Taking  \eqref{eq.bbbq2} into account, \EEE this construction ensures 
\begin{align}\label{finallygamma0}
J_{\phi_n'} \subset \triangle^n.
\end{align} 
\EEE We now separately control the three collections of simplices $\triangle^{n, \text{good}}$, $\triangle^{n, \text{lat}}$, and $\triangle^{n, \text{bad}}$. 

\paragraph{\textbf{Lateral simplices.}} \EEE Note that, by \eqref{eq:propertiesforcubesforu} and \eqref{eq:estimatesnegativerectangle}, it holds that 
\begin{align}
    \label{eq:estimatelateral}
    \L^d\left({\bigcup}_i U_{2\varepsilon_n}(R_{n, i}\cap \partial Q_i)\right)\leq C\theta\varepsilon_n.
\end{align}
\paragraph{\textbf{Good simplices.}} \EEE Recall that  we assumed that $J_\phi$ consists of finitely many $C^1$-hypersurfaces.  Thus,  recalling \eqref{heps}, \EEE we get   
\begin{align}
    \label{eq:estimategoodconvergence}
    \frac{1}{2 \varepsilon_n \EEE}\L^d\left( U_{\varepsilon_n+ h_n}\left(J_{\phi}\setminus  B_{\rm bad} \EEE \right)\EEE \right) \to  \mathcal{H}^{d-1} \EEE \left(J_\phi\setminus B_{\rm bad} \EEE \right),
\end{align}
\EEE where we exploit that the $(d-1)$-dimensional Minkowski content of regular sets coincides with their $\mathcal{H}^{d-1}$-measure.
\paragraph{\textbf{Bad simplices.}} \EEE
Observe \EEE that all simplices that are included in $\Gamma_n(t)$ do not increase the surface energy of the competitor at all, cf.\ \eqref{eq:3.3}. Below in Corollary \ref{thecorollary}  we will see that one can actually replace $\Gamma_n(t)$ by $U_{l_n}(\Gamma_n(t))$,  $l_n:= \theta\varepsilon_n+ h_n$, \EEE  without significantly increasing the energy. Recalling that  in every bad  cube $Q_i$ we have the curve $\Psi^n_i=\Psi^n_{i, \text{pre}}\cup \Phi^n_i$ with $\Psi^n_{i, \text{pre}} = J_{y_n} \cap Q_i \subset \Gamma_n(t) \cap Q_i \EEE$, 
it is therefore meaningful to split the curve $\Psi^n_i$ into  the parts \EEE  $\Phi^{n, \text{new}}_i :=\Phi^n_i \setminus U_{l_n}(\Gamma_n(t))\EEE $  and   $  \Psi^{n, \text{cur}}_i:= \Psi^n_i\cap U_{l_n}(\Gamma_n(t))$. 

We also let $\Phi^{n, \text{new}} =  \bigcup_i\Phi^{n, \text{new}}_i$ and $\Phi^{n, \text{cur}} =  \bigcup_i\Phi^{n, \text{cur}}_i$.   Additionally, we split the set $\triangle^{n, \text{bad}}_i$ into the sets $\triangle^{n, \text{cur}}_i$ and $\triangle^{n, \text{new}}_i$, where 
$\triangle^{n, \text{cur}}_i$ denotes all simplices $\triangle^{n, \text{cur}}_i \subset \triangle^{n, \text{bad}}_i$  intersecting \EEE  $U_{l_n}(\Gamma_n(t))$ and $\triangle^{n, \text{new}}_i =:  \triangle^{n, \text{bad}}_i \setminus \triangle^{n, \text{cur}}_i$.  As $\triangle^{n, \text{bad}}_i$ was defined as all the triangles intersecting $\Psi^n_i$, we know by construction that for every simplex $T  \subset \EEE \triangle^{n, \text{new}}_i\neq \emptyset$, we  have $T\cap \Phi^{n, \text{new}}_i\neq \emptyset$. 
We also set \EEE $\triangle^{n, \text{new}}:= \bigcup_i \triangle^{n, \text{new}}_i$ and $\triangle^{n, \text{cur}}:= \bigcup_i \triangle^{n, \text{cur}}_i$.  In Figure~\ref{figure:U_ln(Gamma)}, we illustrate $U_{l_n}(\Gamma_n(t))$  as well as    $\triangle^{n, \text{cur}}$ and  $\triangle^{n, \text{new}}$. \EEE

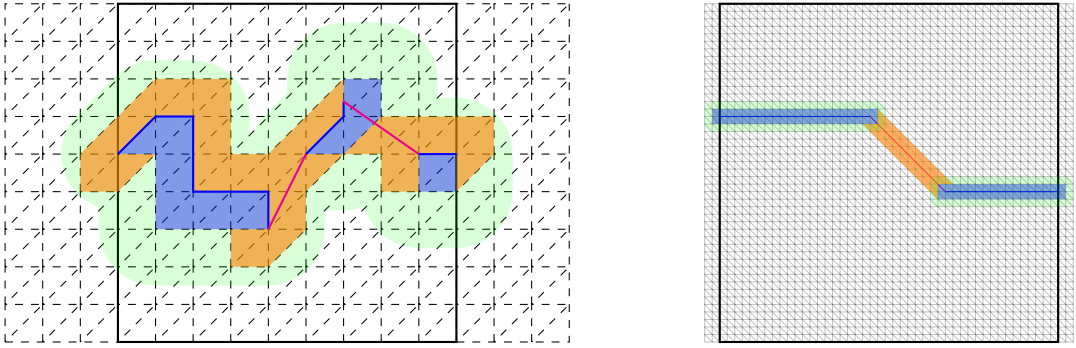
\begin{figure}
\begin{tikzpicture}
    
        \draw[thick] (4, 2) rectangle (8.5, 6.5);
    \foreach \n in {0, ..., 14}
        {\foreach \m in {0, ..., 8}
            {
            \draw[dashed] (0.5*\n+2.5, 0.5*\m+2) -- (0.5*\n+2.5, 0.5*\m+2.5) -- (0.5*\n+3, 0.5*\m+2.5) -- (0.5*\n+2.5, 0.5*\m+2); 
            }    
        }
    \draw[dashed] (2.5, 2) -- (4, 2);
    \draw[dashed] (8.5, 2) -- (10, 2) -- (10, 6.5);

    \def\givencurve{
    (4, 4.5) -- (4.5, 5) -- (5, 5) -- (5, 4) -- (6, 4) -- (6, 3.5) -- (4.5, 3.5) -- (4.5, 4.5) -- (4, 4.5)
    (8, 4.5) -- (8.5, 4.5) -- (8.5, 4) -- (8, 4) -- (8, 4.5) 
    (6.5, 4.5) -- (7, 5) -- (7, 5.5) -- (7.5, 5.5) -- (7.5, 5) -- (7, 4.5) -- (6.5, 4.5) 
    }


    \draw[line width=1.5cm, line cap=round, line join=round, draw=green, opacity = 0.15] \givencurve;

    \def\blmagShifted{
    (4,4.5) -- (4.5,5.0) -- (5,5.0) -- (5,4.0) -- (6,4.0) -- (6,3.5)
    -- (6.5,4.5) -- (7,5.0) -- (7,5.2) -- (8,4.5) -- (8.5,4.5)
  };

    \def\tobefilled{
    (3.5, 4) -- (3.5, 4.5) -- (4.5, 5.5) -- (5.5, 5.5) -- (5.5, 4.5) -- (6, 4.5) -- (7, 5.5) -- (7.5, 5.5) -- (7.5, 5) -- (9, 5) -- (9, 4) -- (7.5, 4) -- (7.5, 4.5) -- (7, 4.5) -- (6.5, 4) -- (6.5, 3.5) -- (6, 3) -- (5.5, 3) -- (5.5, 3.5) -- (4.5, 3.5) -- (4.5, 4) -- (3.5, 4) 
    };

    \def\newfilled{
    (3.5, 4) -- (3.5, 4.5) -- (4.5, 5.5) -- (5.5, 5.5) -- (5.5, 4.5) -- (6, 4.5) -- (7, 5.5) -- (7, 5) -- (6.5, 4.5) -- (7, 4.5) -- (6.5, 4) -- (6.5, 3.5) -- (6, 3) -- (5.5, 3) -- (5.5, 3.5) -- (6, 3.5) -- (6, 4) -- (5, 4) -- (5, 5) -- (4.5, 5) -- (4, 4.5) -- (4.5, 4.5) -- (4, 4) -- (3.5, 4) 
    (7, 4.5) -- (7.5, 5) -- (9, 5) -- (9, 4.5) -- (8.5, 4) -- (8.5, 4.5) -- (8, 4.5) -- (8, 4) -- (7.5, 4) -- (7.5, 4.5) -- (7, 4.5)
    };


    \fill[opacity = 0.6, orange] \newfilled;

        \begin{scope}[shift = {(4, 2)}]
        \triangleobenlinks{3}{3.5-0.5}
        \triangleobenlinks{3}{3-0.5}

        \triangleuntenrechts{2.5}{3-0.5}
        \triangleuntenrechts{3}{3.5-0.5}

        \triangleobenlinks{4}{2.5-0.5}
        \triangleuntenrechts{4}{2}

        \triangleobenlinks{1.5}{2-0.5}
        \triangleobenlinks{1}{2-0.5}
        \triangleobenlinks{0.5}{2-0.5}
        \triangleobenlinks{0.5}{2.5-0.5}
        \triangleobenlinks{0.5}{3-0.5}
    
        \triangleuntenrechts{1.5}{2-0.5}
        \triangleuntenrechts{1}{2-0.5}
        \triangleuntenrechts{0.5}{2-0.5}
        \triangleuntenrechts{0.5}{2.5-0.5}
        \triangleuntenrechts{0.5}{3-0.5}
        \triangleuntenrechts{0}{3-0.5}

    \end{scope}
    
  

    \draw[thick, blue] (4, 4.5) -- (4.5, 5) -- (5, 5) -- (5, 4) -- (6, 4) -- (6, 3.5);
    \draw[thick, blue] (6.5, 4.5) -- (7, 5) -- (7, 5.2);
    \draw[thick, blue] (8, 4.5) -- (8.5, 4.5);

    \draw[thick, magenta] (6, 3.5) -- (6.5, 4.5);
    \draw[thick, magenta] (7, 5.2) -- (8, 4.5);

    \begin{scope}[shift={(12, 2)}]
            \draw (0, 0) rectangle (4.5, 4.5);
    \foreach \n in {-2, ..., 46}
        {\foreach \m in {0, ..., 44}
            {
            \draw[line width = 0.002 mm, nearly transparent] (0.1*\n, 0.1*\m) -- (0.1*\n+0.1, 0.1*\m) -- (0.1*\n, 0.1*\m+0.1) -- (0.1*\n, 0.1*\m); 
            }    
        }
    \draw[line width = 0.002 mm, nearly transparent] (-0.2, 4.5) -- (0, 4.5) (4.5, 4.5) -- (4.7, 4.5) -- (4.7, 0);
        
    \draw[thick] (0, 0) rectangle (4.5, 4.5);
    \draw[blue] (0, 3) -- (2, 3) (3, 2) -- (4.5, 2);
    \draw[magenta] (2, 3) -- (3, 2);
    \def\blueoutline {(-0.1, 2.9) -- (-0.1, 3.1) -- (2.1, 3.1) -- (2.1, 2.9) -- (-0.1, 2.9) (2.9, 2.1) -- (2.9, 1.9) -- (4.6, 1.9) -- (4.6, 2.1) -- (2.9, 2.1) };
    \fill[orange, opacity = 0.6] (2.1, 3.1) -- (2.1, 2.9) -- (1.9, 2.9) -- (2.9, 1.9) -- (2.9, 2.1) -- (3.1, 2.1) -- (2.1, 3.1);
    \fill[blue, opacity = 0.4] \blueoutline;
       \draw[line width=0.2cm, line cap=round, line join=round, draw=green, opacity = 0.15] \blueoutline;
    \end{scope}

    
\end{tikzpicture}
\caption{For both figures: blue curve $\Psi^n_{i,{\rm pre}}$, magenta curve $\Phi^n_i$, blue triangles $\triangle^n_i \cap \Gamma_n(t)$, orange triangles $\triangle^n_i   \setminus \EEE \Gamma_n(t)$,  and \EEE green zones $U_{l_n}(\Gamma_n(t))$. First figure: continuation of Figure \ref{figur1},  where \EEE $U_{l_n}(\Gamma_n(t))$ completely covers $\triangle^n_i$,  i.e., $\triangle^n_i = \triangle^{n, {\rm cur}}_i$. \EEE Second figure: there are orange triangles that are not covered by $U_{l_n}(\Gamma_n(t))$,  i.e., $\triangle^{n, {\rm new}}_i \neq \emptyset$. 
}
\label{figure:U_ln(Gamma)}
\end{figure}



\color{black}
We begin with a lemma that controls the passage to larger neighborhoods. This \EEE will allow us to control $U_{\varepsilon_n+2\theta\varepsilon_n}(\Gamma_n(t))$. In the sequel, by $B_r(x)$  we  denote the \emph{closed} ball with center $x$ and radius $r$.  

        


\begin{lemma}[Increasing the neighborhood]
    \label{lem:incneighbourhoodlemma}
    Given some set $V \subset \R^d$ and $r>0$, we suppose that the $r$-neighborhood $U_r(V)$ of $V$ satisfies $\L^d(U_r(V))<\infty$. Then, we find a constant $C>0$ only depending on the dimension $d$ such that for arbitrary $r'>0$  the $r+r'$-neighborhood satisfies
    \begin{align}
        \label{eq:biggerneighbourhood}
        \L^d(U_{r+r'}(V))\leq C\L^d(U_r(V))\left(\frac{ r+ r'}{ r}\right)^d. 
    \end{align}
    We also have 
    \begin{align}
        \label{eq:biggerneighbourhood2}
        \L^d\big(U_{r+r'}(V)\setminus U_r(V)\big)\leq C\L^d(U_r(V))\frac{( r+ r')^d- r^d}{ r^d}. 
    \end{align}
\end{lemma}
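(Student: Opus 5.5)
We prove \eqref{eq:biggerneighbourhood} by a packing/covering argument with a maximal separated family, and then read off \eqref{eq:biggerneighbourhood2} from it. Since $U_r(V)$ has finite measure, $V$ is bounded: any point of $V$ carries a ball $B_r(x)\subset U_r(V)$, and infinitely many disjoint such balls would give infinite measure. Hence a maximal subset $\{x_1,\dots,x_N\}\subset V$ with pairwise distances $|x_k-x_l|>r$ is finite. Maximality gives $V\subset\bigcup_k B_r(x_k)$. The balls $B_{r/2}(x_k)$ are pairwise disjoint (up to boundaries) and all contained in $U_r(V)$, so
\begin{align*}
N\,\omega_d (r/2)^d\le \L^d(U_r(V)).
\end{align*}

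Next we cover the larger neighborhood. From $V\subset\bigcup_k B_r(x_k)$ we get $U_{r+r'}(V)\subset\bigcup_k B_{2r+r'}(x_k)$. Therefore
\begin{align*}
\L^d(U_{r+r'}(V))\le N\omega_d(2r+r')^d\le 2^d\L^d(U_r(V))\frac{(2r+r')^d}{r^d}\le 4^d\,\L^d(U_r(V))\Big(\frac{r+r'}{r}\Big)^d,
\end{align*}
using $2r+r'\le 2(r+r')$. This is \eqref{eq:biggerneighbourhood} with $C=4^d$.

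For \eqref{eq:biggerneighbourhood2} the naive subtraction of \eqref{eq:biggerneighbourhood} is useless for small $r'$, since its constant does not vanish as $r'\to0$. This is the only real step. We instead use the same cover but count only the shell. Every point $y\in U_{r+r'}(V)\setminus U_r(V)$ satisfies $r<\operatorname{dist}(y,V)\le r+r'$. Two cases are convenient:
\begin{itemize}
\item If $r'\ge r$, then $(r+r')^d-r^d\ge c\,(r+r')^d$, and the claim follows from \eqref{eq:biggerneighbourhood}.
\item If $r'<r$, write the shell as the set where $\operatorname{dist}(\cdot,V)\in(r,r+r']$. The function $f=\operatorname{dist}(\cdot,V)$ is $1$-Lipschitz with $|\nabla f|=1$ a.e.\ off $\overline V$. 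By the coarea formula, the shell measure is $\int_r^{r+r'}\H^{d-1}(\{f=s\})\,\ders$.
\end{itemize}

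It therefore suffices to bound the level-set areas in the case $r'<r$. For $s\in(r,2r)$ we localize: $\{f=s\}\subset\bigcup_k B_{3r}(x_k)$. Inside each ball, $\{f=s\}\cap B_{3r}(x_k)$ is part of the boundary of the set $U_s(V)$, which is a union of balls of radius $s\ge r$. Such sets have boundary area in $B_{3r}(x_k)$ bounded by $C r^{d-1}$. To see this, note that $U_s(V)$ satisfies a uniform exterior/interior ball-type structure, so that by a standard result its perimeter in a ball of radius $3r$ is at most $C(d)r^{d-1}$. Alternatively, use the monotonicity of $s\mapsto \L^d(U_s(V))$ together with the fact that its complement is a union of balls' complements. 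Summing over the balls gives $\H^{d-1}(\{f=s\})\le CNr^{d-1}\le C\L^d(U_r(V))/r$. Integrating over $s\in(r,r+r']$ then yields $C\L^d(U_r(V))\,r'/r\le C\L^d(U_r(V))\frac{(r+r')^d-r^d}{r^d}$.

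The local perimeter bound for level sets of the distance function is the main obstacle. If the structural argument fails, I would fall back on a Steiner-type inequality: for $s\ge r$, $\frac{\rm d}{{\rm d}s}\L^d(U_s(V))\le \frac{d}{s}\L^d(U_s(V))$. This holds because $U_s(V)$ is star-shaped relative to its centers, giving $\L^d(U_{s}(V))(\rho)^{-d}$-type monotonicity under dilation of each ball. Integrating this differential inequality from $r$ to $r+r'$ gives \eqref{eq:biggerneighbourhood2} directly.
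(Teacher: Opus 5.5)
Your proof of \eqref{eq:biggerneighbourhood} via a maximal $r$-separated subset of $V$ is correct, with $C=4^d$. You also correctly observe that a cover of $V$ by balls $B_r(x_k)$ only gives $U_{r+r'}(V)\subset\bigcup_k B_{2r+r'}(x_k)$, which is useless for the shell when $r'$ is small. The paper's own argument does not get around this either. It obtains \eqref{eq:biggerneighbourhood2} by asserting that the Besicovitch subfamilies cover all of $U_r(V)$ and then summing the annuli $B_{r+r'}(x)\setminus B_r(x)$. But Besicovitch only covers the centers: for a segment $V\subset\R^2$, each point on a flat side of $\partial U_r(V)$ lies in exactly one ball $B_r(x)$ with $x\in V$, so no countable subfamily covers $U_r(V)$.

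The genuine gap is that you never prove the estimate you identify as the only real step. For $r'<r$ the coarea reduction and the localization are fine. However, the bound $\mathcal{H}^{d-1}(\{f=s\}\cap B_{3r}(x_k))\le C(d)r^{d-1}$ is only asserted. $U_s(V)$ satisfies an interior ball condition of radius $s$, but no exterior one: two overlapping balls already produce a reentrant edge. So there is no ``standard result'' for two-sided ball conditions to invoke. Your fallback $\frac{\rm d}{{\rm d}s}\L^d(U_s(V))\le\frac{d}{s}\L^d(U_s(V))$ is true; it is equivalent to Kneser's inequality $\L^d(U_{\lambda s}(V))\le\lambda^d\L^d(U_s(V))$ for $\lambda\ge1$. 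But ``star-shaped relative to its centers'' does not prove it. How the volume of a union changes when each ball is dilated about its own center is exactly what has to be shown.

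The missing idea is the monotonicity of the nearest-point projection. Fix $\lambda\ge1$, choose for every $z\in\R^d$ a nearest point $\pi(z)\in\overline V$, and set $T(z):=\pi(z)+\lambda^{-1}(z-\pi(z))$, so that $T(U_{\lambda r}(V))\subset U_r(V)$. Adding $|z-\pi(z)|^2\le|z-\pi(w)|^2$ and $|w-\pi(w)|^2\le|w-\pi(z)|^2$ yields $(\pi(z)-\pi(w))\cdot(z-w)\ge0$, hence
\begin{align*}
(T(z)-T(w))\cdot(z-w)\ge\lambda^{-1}|z-w|^2 \quad\text{and thus}\quad |T(z)-T(w)|\ge\lambda^{-1}|z-w|.
\end{align*}
So $T$ is injective on $U_{\lambda r}(V)$ with $\lambda$-Lipschitz inverse, which gives $\L^d(U_{\lambda r}(V))\le\lambda^d\L^d(U_r(V))$. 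With $\lambda=(r+r')/r$ this is \eqref{eq:biggerneighbourhood}. Subtracting the finite number $\L^d(U_r(V))$ then gives \eqref{eq:biggerneighbourhood2}, both with $C=1$. This makes the net, the case distinction and the coarea formula unnecessary.
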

\begin{proof}   
     Define $\mathcal{B}\EEE:= \{B_ r(x)\colon \, x\in V\}$. It holds that $\bigcup_{B_ r(x)\in \mathcal{B}\EEE}B_ r(x)=U_r(V).$
    With the Besicovitch covering theorem, we can find $D = D(d) \in \N$ countable and disjoint subsets $\mathcal{C}_i\subset \mathcal{B} \EEE $ such that $\bigcup_{i=1}^D\bigcup_{B_ r(x)\in \mathcal{C}_i}B_ r(x)=U_r(V).$ We also define the sets of centers $M_i:= \{x\colon \, B_r(x)\in \mathcal{C}_i\}$, as well as $S:=  (\frac{r+r'}{r} )^d$ and $S^\text{add}:= \frac{( r+ r')^d- r^d}{ r^d}$.
    
 We fix $i = 1,\ldots, D$.  First, we note that $\# \mathcal{C}_i\leq \frac{\L^d(U_r(V))}{\L^d(B_ r(0))}$.  By scaling we have the relation  $\L^d(B_ r(x))= S^{-1} \EEE \L^d(B_{r+r'}(x))$  for all $x \in \R^d$. Since the subcollection $\mathcal{C}_i$ is disjoint, we can estimate
    \begin{align*}
 \L^d(B'_{i}) =   \L^d\left({\bigcup}_{x\in M_i}B_{ r+ r'}(x)\right)
    \leq \sum_{x\in M_i}\L^d(B_{ r+ r'}(x))
    =S\L^d\left({\bigcup}_{x\in M_i}B_r(x)\right) =  S\L^d(B_{i}),
    \end{align*}
where we  define \EEE $B_{i}:= {\bigcup}_{x\in M_i}B_ r(x)$ and $B'_{i}:= \bigcup_{x\in M_i}B_{ r+ r'}(x)$. Letting also  $B^\text{add}_i:= B_i'\setminus B_i$, we can estimate
    \begin{align*}
        \L^d(B^\text{add}_i)\,
        \leq\,&\#\mathcal{C}_i\big(\L^d(B_{ r+ r'}(0))-\L^d(B_ r(0))\big) 
        \leq S^\text{add} \L^d( B_i \EEE) ,
    \end{align*}
where we used that $B_{i}\subset B_{i}'$. Then, summing over $i = 1,\ldots, D$ we derive 
    \begin{align*}
        \L^d\left(\bigcup_{i=1}^D B'_{i}\right)\leq  D   S \L^d(U_r(V)),
    \end{align*}
    as well as 
    \begin{align*}
        \L^d\left(\bigcup_{i=1}^D B'_{i}\setminus U_r(V)\right)\leq\L^d\left(\bigcup_{i=1}^D B'_{i}\setminus B_{i}\right)
        =\L^d\left(\bigcup_{i=1}^D B^\text{add}_i\right)
        \leq  D   S^\text{add} \L^d(U_r(V)).
    \end{align*}
    Now,   to conclude, it suffices to show   that $  \bigcup_{i=1}^D B'_{i}\supset U_{r+r'}(V)$. For this matter, fix an arbitrary point $x\in U_{r+r'}(V)$. By definition, there has to be a point $y\in V$ such that $|y-x|\leq  r+ r'$. We define $l$ to be the line connecting $x$ and $y$ and define $x':= l\cap \partial U_r(V)$ as the crossing point of $l$ and $\partial U_r(V)$.   As ${\rm dist}(x',V) = r$, we note that $|x-x'| \le r'$. \EEE Since $x'\in  {U_r(V)}$, \EEE 
    there exists \EEE $z\in V$ and a collection $\mathcal{C}_i$ such that $B_r(z)\in \mathcal{C}_i$ and $x'\in {B_ r(z)} \EEE $. Then, with the triangle inequality we can deduce that $|x-z|\leq |x-x'|+|x'-z|\leq  r+ r'$, which means that $x\in B_{ r+ r'}(z)$ and therefore $x\in \bigcup_{i=1}^D B'_{i} \EEE$. This concludes  the proof.  
\end{proof}
Recall the neighborhood $U_{n}^{\mathcal{T}}(\cdot) = U_{\eps_n,h_n}^{\mathcal{T}}(\cdot)$ introduced before \eqref{eq:energydefinitionepsilon}. \EEE
\begin{corollary}\label{thecorollary}
There exists a constant $C >0$ \EEE only depending on $g$ and $d$ \EEE such that, for all $t \in [0,1]$ and $n \in \N$, it holds that
    \begin{align}
        \label{biggerneighbourhoodfortheta}
        \L^d\left(U_{\varepsilon_n+2\theta\varepsilon_n}(\bruchdiscepsilonn(t))\setminus \bruchdiscepsilonnhe{t} \right) \leq C\theta\varepsilon_n.  
    \end{align}
    Additionally, for $n$ large enough, we have 
    \begin{align}
        \label{eq:UlGamman}
        U^{\mathcal{T}}_n(U_{l_n +  h_n}(\Gamma_n(t)))\subset U_{\varepsilon_n+2\theta\varepsilon_n}(\Gamma_n(t))
    \end{align}
with $l_n= \theta\varepsilon_n+ h_n$.
\end{corollary}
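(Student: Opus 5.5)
The estimate \eqref{biggerneighbourhoodfortheta} will come from Lemma \ref{lem:incneighbourhoodlemma}, applied with $V=\Gamma_n(t)$, $r=\varepsilon_n$ and $r'=2\theta\varepsilon_n$. Since $U_{\varepsilon_n}(\Gamma_n(t))\subset U^{\mathcal{T}}_n(\Gamma_n(t))$ (the simplices covering $U_{\varepsilon_n}$ contain it), we have
\begin{align*}
U_{\varepsilon_n+2\theta\varepsilon_n}(\Gamma_n(t))\setminus U^{\mathcal{T}}_n(\Gamma_n(t))\subset U_{\varepsilon_n+2\theta\varepsilon_n}(\Gamma_n(t))\setminus U_{\varepsilon_n}(\Gamma_n(t)).
\end{align*}
Estimate \eqref{eq:biggerneighbourhood2} then bounds the measure of the right-hand side by $C\L^d(U_{\varepsilon_n}(\Gamma_n(t)))\big((1+2\theta)^d-1\big)$. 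For $\theta\le 1$ the mean value theorem gives $(1+2\theta)^d-1\le 2d\,3^{d-1}\theta$. Finally, Lemma \ref{lem:energybound} gives $\L^d(U_{\varepsilon_n}(\Gamma_n(t)))\le \L^d(U^{\mathcal{T}}_n(\Gamma_n(t)))\le 2C_1\varepsilon_n$ uniformly in $t$ and $n$. Together this yields $C\theta\varepsilon_n$ with $C$ depending only on $d$ and $g$ (through $C_1$). The finiteness hypothesis $\L^d(U_r(V))<\infty$ of the lemma holds because $\Gamma_n(t)\subset\Omega'$ is bounded.

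For the inclusion \eqref{eq:UlGamman}, I would compare distances directly. Let $x\in U^{\mathcal{T}}_n(U_{l_n+h_n}(\Gamma_n(t)))$. Then $x$ lies in a simplex $T$ that meets $U_{\varepsilon_n}(U_{l_n+h_n}(\Gamma_n(t)))$. Since $U_a(U_b(V))\subset U_{a+b}(V)$ by the triangle inequality, $T$ contains a point $y$ with $\mathrm{dist}(y,\Gamma_n(t))\le \varepsilon_n+l_n+h_n$. By \eqref{eq:bound_triangulation}, $|x-y|\le h_n$, so
\begin{align*}
\mathrm{dist}(x,\Gamma_n(t))\le \varepsilon_n+\theta\varepsilon_n+3h_n.
\end{align*}
By \eqref{heps} we have $3h_n\le\theta\varepsilon_n$ for $n$ large (depending on $\theta$). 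Hence $x\in U_{\varepsilon_n+2\theta\varepsilon_n}(\Gamma_n(t))$.

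Neither step is a serious obstacle. The point to watch is that the constant in the first part is uniform in $t$, which holds because the energy bound of Lemma \ref{lem:energybound} is uniform. The second part holds only for $n$ large depending on $\theta$, which is harmless since $\theta$ is fixed throughout the construction.
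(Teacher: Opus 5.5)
Your proof is correct and follows the paper's argument. The paper likewise applies \eqref{eq:biggerneighbourhood2} with $V=\Gamma_n(t)$ and $r=\varepsilon_n$, combined with $U_n(\Gamma_n(t))\subset U^{\mathcal{T}}_n(\Gamma_n(t))$ and the energy bound \eqref{eq:energybound}, and it justifies \eqref{eq:UlGamman} by $h_n\ll\theta\varepsilon_n$. Your choice $r'=2\theta\varepsilon_n$ is the right one (the paper's $r'=\varepsilon_n+2\theta\varepsilon_n$ appears to be a typo), and your bound $\mathrm{dist}(x,\Gamma_n(t))\le\varepsilon_n+\theta\varepsilon_n+3h_n$ just spells out the paper's one-line justification.
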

\begin{proof}
    The fact that the neighborhood $U_{\varepsilon_n+2\theta\varepsilon_n}(\bruchdiscepsilonn(t))$ is bigger than $U^{\mathcal{T}}_n( U_{l_n + h_n \EEE }\EEE (\Gamma_n(t)))$  stems from the fact that, for $n$ large enough, $\theta\varepsilon_n$ is much \EEE bigger than the diameter of every simplex in $\mathcal{T}\hn$,  which   is bounded by $h_n$, see \eqref{eq:bound_triangulation} and \eqref{heps}. \EEE 
    
    Concerning   estimate \eqref{biggerneighbourhoodfortheta}, we use  \eqref{eq:biggerneighbourhood2} with $V=\Gamma_n(t)$, $r=\varepsilon_n$, and $r'=\varepsilon_n+2\theta\varepsilon_n$. The statement then follows from the fact that $U_{n}(\Gamma_n(t))\subset U^{\mathcal{T}}_n(\Gamma_n(t))$  and $U^{\mathcal{T}}_n(\Gamma_n(t))\leq C  \eps_n  $ due to \eqref{eq:energybound}.  
\end{proof}
 
Using the bigger neighborhood will therefore only increase the energy functional by an infinitesimal amount which will vanish for $\theta \to 0$. This motivates the choice  of  $\Psi^{n, \text{cur}}_i$  and $\triangle^{n, \text{cur}}_i$. As by definition we have $\Psi^{n, \text{cur}}_i \subset U_{l_n}(\Gamma_n(t))$   and $\triangle^{n, \text{cur}}_i \subset U_{ h_n}(\Psi^{n, \text{cur}}_i)\subset U_{l_n+h_n}(\Gamma_n(t))$, we thus get $U^{\mathcal{T}}_n(\triangle^{n, \text{cur}}_i\EEE)\subset U_{\varepsilon_n+2\theta\varepsilon_n}(\Gamma_n(t))$ by \eqref{eq:UlGamman}.  \EEE


Now we   deal with $\Phi^{n, \text{new}}_i = \EEE \Phi^n_i\setminus U_{l_n}\EEE (\bruchdiscepsilonn(t))$, where $\Phi^n_i$ still denotes the minimal separating set from \eqref{Psi}. Defining $\bar{\varepsilon}_n:= \theta \varepsilon_n$ for convenience, we proceed with a lemma concerning the covering of $\Phi^n_i$.   Recall the definition of the precrack $\Psi^n_{i, {\rm pre}}= J_{y_n}\cap Q_i$. 
\begin{figure}
\begin{tikzpicture}[scale=0.678, transform shape]
    \draw (0, 0) rectangle (8, 8);
    \draw[thick] (8, 8) -- (10, 10) -- (2, 10) -- (0, 8);
    \draw[thick] (8, 8) -- (10, 10) -- (10, 2) -- (8, 0);
    \draw[thick, dashed] (0, 0) -- (2, 2) -- (10, 2);
    \draw[thick, dashed] (2, 2) -- (2, 10);
        \def\badparts{
    (4.95, 4.95) -- (5.05, 4.95) -- (5.1, 5) -- (5.1, 9) -- (5, 9) -- (4.95, 8.95) -- (4.95, 4.95)
    };
     \shade[line width=0.4cm, line cap=round, line join=round, draw=orange, opacity = 0.2] \badparts;
    \def\goodareapart{
    (0, 4) -- (8, 4) -- (10, 6) -- (2, 6) -- (0, 4) (5, 5) -- (5.1, 5) -- (5.05, 4.95) -- (4.95, 4.95) -- (5, 5)
    };
    \def\badpartone{
    (4.95, 4.95) -- (4.95, 8.95) -- (5, 9) -- (5, 5) -- (4.95, 4.95) 
    };
    \def\badparttwo{
    (5, 5) -- (5.1, 5) -- (5.1, 9) -- (5, 9) -- (5, 5) 
    };
    \def\badpartthree{
    (4.95, 4.95) -- (5.05, 4.95) -- (5.05, 8.95) -- (4.95, 8.95) -- (4.95, 4.95)
    };
    \def\badpartfour{
    (5.1, 5) -- (5.1, 9) -- (5.05, 8.95) -- (5.05, 4.95) -- (5.1, 5)
    };
    \draw[blue] (0, 4) -- (8, 4) -- (10, 6);
    \draw[blue, dashed] (0, 4) -- (2, 6) -- (10, 6);
    \draw[green] (4.95, 4.95) -- (4.95, 8.95) -- (5.05, 8.95) -- (5.05, 4.95)
    (5.05, 8.95) -- (5.1, 9) -- (5.1, 5)
    (5.1, 9) -- (5, 9) -- (4.95, 8.95)
    ;
    \draw[green, dashed] (5, 8.9) -- (5, 5) 
    ;
    \draw[green, dash pattern=on 0.75pt off 0.75pt] (5, 5) -- (4.95, 4.95) -- (5.05, 4.95) -- (5.1, 5) -- (5, 5)
    ;
    \fill[blue, opacity = 0.3] \goodareapart;
    \fill[green, opacity = 0.1] \badpartone;
    \fill[green, opacity = 0.1] \badparttwo;
    \fill[green, opacity = 0.1] \badpartthree;
    \fill[green, opacity = 0.1] \badpartfour;
    \fill[magenta, opacity = 0.75] (4.95, 4.95) -- (5.05, 4.95) -- (5.1, 5) -- (5, 5) -- (4.95, 4.95);
    \node[circle, label = {0: \color{blue} $\Psi^n_i$}] at (1, 4.5){};
    \node[circle, label = {270: \color{magenta} $\Phi^n_i$}] at (5, 5){};
    \node[circle, label = {0: \color{green} $\partial^*E^n_{t_i}\setminus J_{y_n}$}] at (5.5, 7){};
    \node[circle, label = {180: \color{orange} $U^\T_{n}(\partial^*E^n_{t_i}\setminus J_{y_n})$}] at (5, 7.5){};
\end{tikzpicture}
\caption{An example in $d=3$, where \eqref{toprov} does not hold for $\partial^*E^n_{t_i}\setminus J_{y_n}$ in place of $\Phi^n_i$ since $\partial^*E^n_{t_i}\setminus J_{y_n}$ (in green) has a small surface. Yet, the set has big diameter, and therefore the volume of $U^\T_{n}(\partial^*E^n_{t_i}\setminus J_{y_n})$ (in orange) is of order $\eps_n$. Consequently, in order to guarantee the estimate in Corollary \ref{corollary:minimalseparator}, we need to replace $\partial^*E^n_{t_i}\setminus J_{y_n}$ by $\Phi^n_i$ (in red), as defined in \eqref{Psi}.}
\label{figure:badnose}
\end{figure}
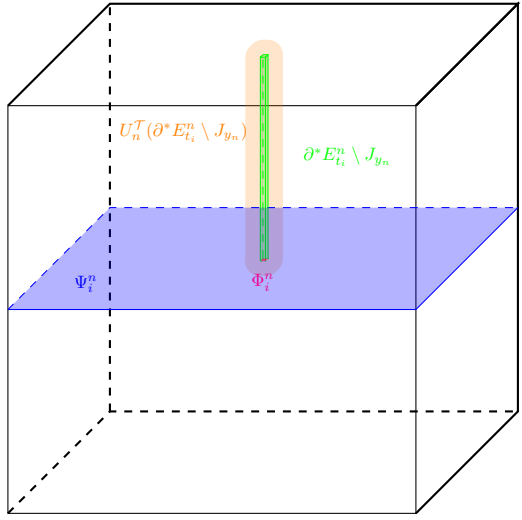
 
\begin{lemma}[Lower density bound for $\Phi^n_i$\EEE]
\label{lem:minimalseparator}
There exists a dimensional \EEE constant $c>0$ such that for  each $x \in \Phi^n_i$ with $B_{\bar{\varepsilon}_n}(x) \subset Q_i$ and   $B_{\bar{\varepsilon}_n}(x) \cap \Psi^n_{i, {\rm pre}} = \emptyset$ it holds that 
\begin{align}
\label{toprov}
\mathcal{H}^{d-1}\big(   B_{\bar{\varepsilon}_n}(x) \cap \Phi^n_i \big) \ge c \bar{\varepsilon}_n^{d-1}.
\end{align}
\end{lemma}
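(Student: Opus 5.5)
The plan is a comparison argument against the minimality \eqref{Psi}, in the spirit of density bounds for minimal surfaces. Fix $x \in \Phi^n_i$ with $B_{\bar\varepsilon_n}(x)\subset Q_i$ and $B_{\bar\varepsilon_n}(x)\cap\Psi^n_{i,{\rm pre}}=\emptyset$. For $\rho\in(0,\bar\varepsilon_n]$ set $m(\rho):=\mathcal{H}^{d-1}(B_\rho(x)\cap\Phi^n_i)$. We may take $\Phi^n_i$ closed and essentially minimal, i.e.\ without superfluous pieces; any minimizer may be reduced to its essential part without changing the estimate, and $x$ lies in that part. Because $B_{\bar\varepsilon_n}(x)$ does not meet the precrack, inside this ball the separating property depends only on $\Phi^n_i$.

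\textbf{Step 1 (competitor).} For a.e.\ $\rho$ the slice $\partial B_\rho(x)\cap\Phi^n_i$ is $\mathcal{H}^{d-2}$-finite, and by the coarea inequality
\[
\int_0^{\bar\varepsilon_n}\mathcal{H}^{d-2}(\partial B_\rho(x)\cap\Phi^n_i)\,{\rm d}\rho\le m(\bar\varepsilon_n).
\]
Define the competitor $\Phi':=(\Phi^n_i\setminus B_\rho(x))\cup S_\rho$, where $S_\rho\subset\partial B_\rho(x)$ is the smaller of the two pieces of the sphere cut out by $\Phi^n_i\cap\partial B_\rho(x)$. A simpler alternative for $S_\rho$ is one of the two closed ``caps'' of $\partial B_\rho(x)$ bounded by the trace. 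More crudely, one can take $S_\rho$ to be the part of $\partial B_\rho(x)$ lying in the components of $Q_i\setminus(\Phi^n_i\cup\Psi^n_{i,{\rm pre}})$ that contain $D^+_{n,i}$. Then $\Phi'\cup\Psi^n_{i,{\rm pre}}$ is still separating: any path from $D^-_{n,i}$ to $D^+_{n,i}$ avoiding $\Phi'$ must enter $B_\rho(x)$, and entering from the $D^+$ side is blocked by $S_\rho$. So one replaces the interior of the ball by the portion of the sphere adjacent to one side.

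\textbf{Step 2 (isoperimetric bound on the sphere).} Minimality gives $m(\rho)\le\mathcal{H}^{d-1}(S_\rho)$. Choosing the smaller side and applying the relative isoperimetric inequality on the sphere $\partial B_\rho(x)$, we aim at
\[
\mathcal{H}^{d-1}(S_\rho)\le C\bigl(\mathcal{H}^{d-2}(\partial B_\rho\cap\Phi^n_i)\bigr)^{\frac{d-1}{d-2}}
\]
for $d\ge3$. This requires the smaller part to have area controlled by the boundary between the two parts, which in turn are separated on the sphere by the trace of $\Phi^n_i$. Since $m'(\rho)\ge\mathcal{H}^{d-2}(\partial B_\rho\cap\Phi^n_i)$, this yields the differential inequality $m(\rho)^{\frac{d-2}{d-1}}\le Cm'(\rho)$. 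Integrating it from $0$ to $\bar\varepsilon_n$, using that $m(\rho)>0$ for all $\rho>0$ because $x$ is in the support, gives $m(\bar\varepsilon_n)\ge c\,\bar\varepsilon_n^{d-1}$. For $d=2$ the argument is direct: if $\Phi^n_i\cap\partial B_\rho(x)=\emptyset$ for some $\rho$, then deleting $\Phi^n_i\cap B_\rho(x)$ preserves separation (the sphere is connected and lies in a single component), contradicting essential minimality. Hence $\Phi^n_i$ meets every circle around $x$, and by connectedness-type arguments $m(\bar\varepsilon_n)\ge\bar\varepsilon_n$.

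\textbf{Main obstacle.} The delicate point is Step 2: the two sides of the sphere must each be assigned consistently to one of the components containing $D^\pm_{n,i}$, and there may be several components of the complement that touch neither. My first attempt would be to consider only the component $L^+$ of $Q_i\setminus\Psi^n_i$ containing $D^+_{n,i}$, take $S_\rho:=\partial B_\rho(x)\cap\overline{L^+}$, and alternatively its complement in the sphere with $L^+$ enlarged by $B_\rho(x)$. Both choices are admissible, and the smaller of the two is bounded by the spherical isoperimetric inequality. Nondegeneracy, namely $m(\rho)>0$ for all small $\rho$, also needs care. It follows because if $m(\rho)=0$, then removing $\Phi^n_i\cap B_\rho(x)$ (an $\mathcal{H}^{d-1}$-null set) keeps the property of separation up to the closedness convention, so we may assume $x$ belongs to the support of $\mathcal{H}^{d-1}|_{\Phi^n_i}$.
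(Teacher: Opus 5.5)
Your proposal is correct and follows essentially the same route as the paper: you replace $\Phi^n_i\cap B_\rho(x)$ by the portion of $\partial B_\rho(x)$ adjacent to the smaller side, bound it via the isoperimetric inequality on the sphere and the coarea inequality, and integrate the resulting inequality $m^{\frac{d-2}{d-1}}\le C m'$ (the paper instead argues by contradiction at a single good radius $\sigma$ and invokes a nonlinear Gr\"onwall inequality). Your restriction to points $x$ in the support of $\mathcal{H}^{d-1}|_{\Phi^n_i}$ and your separate treatment of $d=2$ are worthwhile refinements that the paper leaves implicit, since its Gr\"onwall step needs $f\not\equiv 0$ near $0$ and the exponent $\frac{d-1}{d-2}$ presupposes $d\ge 3$.
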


\begin{proof}
Fix $x \in \Phi^n_i$, and for simplicity write $B = B_{\bar{\varepsilon}_n}(x)$ and $B_s = B_{s}(x)$  for $s>0$. Denote the connected component of $Q_i \setminus (\Psi^n_{i, {\rm pre}} \EEE \cup \Phi^n_i)$ which contains $D^+_{n, i}$ by $E$. Note that $\partial E \cap  B \subset \Phi^n_i$ since $B \subset Q_i$ and $\Gamma^n_{i, {\rm pre}} \cap B = \emptyset$. Define $f(s) = \mathcal{H}^{d-2}(  \partial B_s \cap \Phi^n_i)$. By the isoperimetric inequality on the sphere (c.f.\ \cite[Chapter 2]{BuragoZalgaller1988})   we have 
\begin{align}\label{prop1}
 \min\big\{\mathcal{H}^{d-1}(E \cap  \partial B_s ),  \mathcal{H}^{d-1}(  \partial B_s \setminus E )  \big\} \le c_d\big(\mathcal{H}^{d-2}(\partial B_s \cap \partial E)\big)^{\frac{d-1}{d-2}} \le c_d(f(s))^{\frac{d-1}{d-2}}   \quad \text{for $s\le \bar{\eps}_n$} \EEE
 \end{align}
 for some constant $c_d>0$ only depending on the dimension. Let us now assume that  \eqref{toprov} is violated, in the sense that 
 \begin{align}\label{toprov1.5}
\mathcal{H}^{d-1}\big(   B  \cap \Phi^n_i \big) <   \frac{1}{2(c_d^{d-2} (d-1)^{d-1})}  \bar{\varepsilon}_n^{d-1}.
\end{align}
We will show that this leads to a contradiction, which then implies the statement of the lemma.

The proof relies on showing that there exists $\sigma \in (0,\bar{\varepsilon}_n)$
such that $\mathcal{H}^{d-1}(\partial E \cap \partial B_\sigma) = 0 $ and  

\begin{align}\label{toprov2} 
 \int_0^\sigma f(s) \, {\rm d} s >  c_d (f(\sigma))^{\frac{d-1}{d-2}} .
\end{align} 
We defer the verification of \eqref{toprov2} to the end and first proceed with the proof.

Without restriction we assume that $\mathcal{H}^{d-1}(E \cap  \partial B_\sigma ) \le  \mathcal{H}^{d-1}(  \partial B_\sigma \setminus E ) $ as otherwise we simply replace $E$ by the connected component of $Q_i \setminus ( \Psi^n_{i, {\rm pre}} \EEE \cup \Phi^n_i)$ which contains $D^-_{n, i}$. Then, \eqref{prop1} gives  
 $$ \mathcal{H}^{d-1}(E \cap  \partial B_\sigma ) \le  c_d(f(\sigma))^{\frac{d-1}{d-2}}.   $$
 We replace $\Phi^n_i$ by the set $\Tilde{\Phi}^n_i := \Phi^n_i \cap (Q_i \setminus {B_\sigma}) \EEE \cup ( \overline{E} \EEE \cap  \partial B_\sigma ).$ By Fubini's theorem, \eqref{toprov2}, and $\mathcal{H}^{d-1}(\partial E \cap \partial B_\sigma) = 0 $ \EEE  we estimate
\begin{align*}
\mathcal{H}^{d-1}(\Tilde{\Phi}^n_i) & = \mathcal{H}^{d-1}({\Phi^n_i}) - \mathcal{H}^{d-1}({\Phi^n_i} \cap B_\sigma) + \mathcal{H}^{d-1} (E \cap  \partial B_\sigma )  \\&\le \mathcal{H}^{d-1}({\Phi^n_i}) - \int_0^\sigma f(s) \, {\rm d}s + c_d(f(\sigma))^{\frac{d-1}{d-2}}  <  \mathcal{H}^{d-1}({\Phi^n_i}).
\end{align*}
This will give a contradiction to the choice of $\Phi^n_i$ in \eqref{Psi} once we have proven that also the set $\Psi^n_{i, {\rm pre}} \cup \Tilde{\Phi}^n_i$ is separating. 

Assuming by contradiction that $\Psi^n_{i, {\rm pre}} \cup \Tilde{\Phi}^n_i$ is not separating, we find $p^\pm \in D^\pm$ and a curve $\gamma \colon [0,1]  \to Q_i \setminus (\Psi^n_{i, {\rm pre}} \cup \Tilde{\Phi}^n_i)$ with $\gamma(0) = p^+$ and $\gamma(1) = p^-$.  Since $\Phi^n_i\setminus \Tilde{\Phi}^n_i \subset B_\sigma$ and $\Psi^n_{i, {\rm pre}} \cup {\Phi^n_i}$ separates, $\gamma$ necessarily intersects $\Phi^n_i \cap B_\sigma$ and we have $\gamma([0,1]) \cap \Phi^n_i \subset B_\sigma$. Thus, recalling $\gamma(0) \in E$, denoting by $\tau$ the first time such that $\gamma(\tau) \in \partial B_\sigma$, we get  $\gamma(\tau) \in \overline{E} \EEE $. However, this shows $\gamma(\tau) \in \overline{E} \EEE \cap \partial B_\sigma \subset \Tilde{\Phi}^n_i$ which contradicts the fact that the curve $\gamma$ does not meet $\Psi^n_{i, {\rm pre}} \cup \Tilde{\Phi}^n_i$.

It remains to show \eqref{toprov2}. This will again be achieved by contradiction. If the statement was wrong, we would have $ \int_0^t g(s)^{\frac{d-2}{d-1}} \, {\rm d} s \le   c_d g(t)$ for a.e.\ \EEE $t \in (0,\bar{\varepsilon}_n)$,
, where we set $g(t) := f(t)^{\frac{d-1}{d-2}}$.  Here we use that \EEE $\H^{d-1}(\partial E\cap \partial B_t)=0$ holds for a.e.\ $t\in (0,\bar{\varepsilon}_n)$.


It is elementary to check that  the solution of the integral equation 
$$\text{$ \int_0^t h(s)^{\frac{d-2}{d-1}} \, {\rm d} s = c_d h(t)$ for all $t \in (0,\bar{\varepsilon}_n)$}$$
 is given by $h(t) = \frac{1}{(c_d(d-1))^{d-1}} t^{d-1}$. Then, the nonlinear Gr\"onwall inequality implies $g(t) \ge h(t)$ for all $t \in (0,\bar{\varepsilon}_n)$ and thus $f(t) \ge \frac{1}{(c_d(d-1))^{d-2}} t^{d-2}$. Again using Fubini's theorem this yields
 \begin{align*}
  \mathcal{H}^{d-1}(   B \cap \Phi^n_i) = \int_0^{\bar{\varepsilon}_n}  f(s) \, {\rm d}s \ge \int_0^{\bar{\varepsilon}_n} \frac{1}{(c_d(d-1))^{d-2}} s^{d-2} \, {\rm d}s 
 =   \frac{1}{(c_d^{d-2} (d-1)^{d-1})}    \bar{\varepsilon}_n^{d-1}.
  \end{align*}
This contradicts \eqref{toprov1.5} and concludes the proof. 
\end{proof}

\begin{corollary}
    \label{corollary:minimalseparator}
For  $\Phi^{n, \text{\rm new}}_i= \Phi^n_i\setminus U_{l_n}  \EEE (\bruchdiscepsilonn(t))$ it holds that
       \begin{align}
        \label{eq:corollaryminimalseparator}
        \L^d\left({\bigcup}_i U_{2\varepsilon_n}(\Phi^{n, \text{\rm new}}_i)\right)\leq C \theta \EEE \varepsilon_n.
    \end{align}
\end{corollary}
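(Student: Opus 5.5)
We plan a covering argument that combines the lower density bound of Lemma \ref{lem:minimalseparator} with the smallness \eqref{psiestimate} of $\sum_i \mathcal{H}^{d-1}(\Phi^n_i)$. Fix a bad cube $Q_i$. The first step is to check that every $x \in \Phi^{n,\rm new}_i$ satisfies the hypotheses of Lemma \ref{lem:minimalseparator}, possibly after discarding a boundary layer. Since $x \notin U_{l_n}(\Gamma_n(t))$ with $l_n = \bar\varepsilon_n + h_n > \bar\varepsilon_n$, and $\Psi^n_{i,\rm pre} = J_{y_n}\cap Q_i \subset \partial\Gamma_n(t)$ lies in $\Gamma_n(t)$ (the simplices are closed), we get $B_{\bar\varepsilon_n}(x)\cap\Psi^n_{i,\rm pre} = \emptyset$.

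The condition $B_{\bar\varepsilon_n}(x)\subset Q_i$ may fail near $\partial Q_i$. We handle this by splitting $\Phi^{n,\rm new}_i$ into an interior part, consisting of points with $\operatorname{dist}(x,\partial Q_i) > \bar\varepsilon_n$, and a boundary part. For the boundary part we intend to use minimality once more. Near the lateral faces outside $R_{n,i}$ one can modify $\Phi^n_i$: since $D^\pm_{n,i}$ already contain the layers $Q_i\setminus R_{n,i}$, a minimal separator has no reason to live there. The remaining points lie within $\bar\varepsilon_n$ of $\partial Q_i\cap R_{n,i}$, so their $2\varepsilon_n$-neighborhood is contained in $U_{3\varepsilon_n}(R_{n,i}\cap\partial Q_i)$. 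Its volume is bounded by $C\theta\varepsilon_n$ exactly as in \eqref{eq:estimatelateral}, because $R_{n,i}\cap\partial Q_i$ has $\mathcal{H}^{d-2}$-size of order $\theta r_i\cdot r_i^{d-2}$ and thickness $\theta r_i$, and $\sum_i r_i^{d-1}\le C$ by \eqref{eq:propertiesforcubesforu}. A cleaner option would be to argue the density bound with $Q_i$ replaced by a slightly enlarged cube. We would first try the lateral-estimate route.

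For the interior part we use a Vitali covering. We choose a maximal family of points $x_k \in \Phi^{n,\rm new}_i$ whose balls $B_{\bar\varepsilon_n}(x_k)$ are pairwise disjoint. Then $\Phi^{n,\rm new}_i \subset \bigcup_k B_{2\bar\varepsilon_n}(x_k)$. By Lemma \ref{lem:minimalseparator} and disjointness,
$$\# \{k\}\, c\,\bar\varepsilon_n^{d-1} \le \sum_k \mathcal{H}^{d-1}(B_{\bar\varepsilon_n}(x_k)\cap\Phi^n_i) \le \mathcal{H}^{d-1}(\Phi^n_i).$$
Consequently,
$$U_{2\varepsilon_n}(\Phi^{n,\rm new}_i)\subset\bigcup_k B_{2\varepsilon_n+2\bar\varepsilon_n}(x_k),$$
and therefore
$$\mathcal{L}^d\big(U_{2\varepsilon_n}(\Phi^{n,\rm new}_i)\big) \le C\varepsilon_n^d\,\#\{k\} \le C\,\frac{\varepsilon_n^d}{\theta^{d-1}\varepsilon_n^{d-1}}\,\mathcal{H}^{d-1}(\Phi^n_i).$$

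Summing over $i$ and using \eqref{psiestimate}, i.e. $\sum_i\mathcal{H}^{d-1}(\Phi^n_i)\le C\theta^{d+1}$, gives the bound $C\theta^{2}\varepsilon_n\le C\theta\varepsilon_n$. This is precisely why the exponent $\theta^{d+1}$ was arranged in \eqref{eq:EntiwithoutJun} and \eqref{eq:estimatesnegativerectangle}. We expect the main obstacle to be the boundary part: points of $\Phi^n_i$ close to $\partial Q_i$, where the competitor used in the proof of Lemma \ref{lem:minimalseparator} would leave $Q_i$. These need either the lateral-neighborhood estimate or a justification that minimal separators avoid the faces outside $R_{n,i}$.
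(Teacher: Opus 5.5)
Your proposal is correct and is essentially the paper's argument. The paper likewise discards the $\bar\varepsilon_n$-neighborhood of $\partial Q_i\cap R_{n,i}$, bounding $\mathcal{L}^d(U_{2\varepsilon_n+\bar\varepsilon_n}(\bigcup_i\partial Q_i\cap R_{n,i}))\le C\theta\varepsilon_n$, and counts disjoint balls of radius $\theta\varepsilon_n$ centered on the rest of $\Phi^{n,\mathrm{new}}_i$ via Lemma~\ref{lem:minimalseparator} and \eqref{psiestimate} to get $C\theta^2\varepsilon_n$; points near $\partial Q_i\setminus R_{n,i}$ do not occur because separation forces $\Phi^n_i\cap D^\pm_{n,i}=\emptyset$, and the differences are only cosmetic (a maximal disjoint family with doubled radii instead of Besicovitch, and enlarging the balls directly to radius $2\varepsilon_n+2\bar\varepsilon_n$ instead of invoking Lemma~\ref{lem:incneighbourhoodlemma}).
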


\begin{proof}
Define $\hat{\Phi}^{n, \text{new}}_i = \Phi^{n, \text{new}}_i \setminus U_{\bar{\eps}_n}(\partial Q_i \cap R_{n,i})$. \EEE
    The first step is to cover $\bigcup_i U_{\theta\varepsilon_n}( \hat{\Phi}^{n, \text{new}}_i\EEE )$ \EEE with balls having centers $x\in \bigcup_i\hat{\Phi}^{n, \text{new}}_i$ and radius $\theta\varepsilon_n$. Then Besicovitch's covering theorem allows us to find $D \in \N$  disjoint subcollections $\mathcal{C}_j$ such that 
    \begin{align*}
        \bigcup_{j=1}^D \bigcup_{B_{\theta\varepsilon_n}(x)\in \mathcal{C}_j}B_{\theta\varepsilon_n}(x)\supset {\bigcup}_i U_{\theta\varepsilon_n}(\hat{\Phi}^{n, \text{new}}_i).       
    \end{align*}
Estimate   \eqref{psiestimate} together with \eqref{toprov}   shows that the maximum amount of balls in each $\mathcal{C}_j$ can be at most $\frac{C\theta^2}{\varepsilon_n^{d-1}}$. This means that $\L^d\left(\bigcup_i U_{\theta\varepsilon_n}(\hat{\Phi}^{n, {\rm new}}_i)\right)\leq C\theta^{d+2}\varepsilon_n$. 
    With \eqref{eq:biggerneighbourhood} \EEE applied to $V=\bigcup_i\hat{\Phi}^{n, \text{new}}_i$, $r = \theta\varepsilon_n$, \EEE and $r'=2\varepsilon_n$, we then find \EEE $\L^d(U_{2\varepsilon_n}(\bigcup_i\hat{\Phi}^{n, \text{new}}_i))\leq C\theta^2\varepsilon_n$.  Using \eqref{eq:propertiesforcubesforu} and \eqref{eq:estimatesnegativerectangle} it is elementary to check that $\L^d( U_{2\varepsilon_n + \bar{\eps}_n}( {\bigcup}_i\partial Q_i \cap R_{n,i})) \le C\theta\varepsilon_n$. This concludes the proof. \EEE 
    \EEE
\end{proof}

\paragraph{\textbf{Proof of Theorem \ref{theorem:stabilityresulteps}.}} \EEE We are now in the position to prove   Theorem \ref{theorem:stabilityresulteps}.   Recall \EEE the definition of $\phi_n$ and $\gamma_n^\phi$ below \eqref{eq.bbbq2} and \EEE in \eqref{finallygamma}, as well as the definitions and properties  of the simplices  $\triangle^n= \triangle^{n, \text{bad}}\cup \triangle^{n, \text{lat}}\cup \triangle^{n, \text{good}}$ . \EEE
\begin{proof}[Proof of Theorem \ref{theorem:stabilityresulteps}]
  As observed at the beginning of the section, it suffices to check \eqref{only thing to check}. We start by estimating the surface energy. 
In view of \eqref{finallygamma}, \EEE we obtain
    \begin{align}\label{ainelettegleichung}
         &\frac{1}{2\varepsilon_n}\L^d\left(U^\T_n(\{\gamma^\phi_n\neq 0\})\setminus U^\T_n(\Gamma_n(t))\right)  =\frac{1}{2\varepsilon_n}\L^d\left(U^{\mathcal{T}}_n(\triangle^n)\setminus U^\T_n(\Gamma_n(t))\right) \notag \\
    \leq\,& \frac{1}{2\varepsilon_n} \left(\L^d\left(U^{\mathcal{T}}_n(\triangle^{n, \text{bad}})\setminus U^\T_n(\Gamma_n(t))\right)+\L^d\left(U^{\mathcal{T}}_n(\triangle^{n, \text{lat}})\right)+\L^d(U^{\mathcal{T}}_n(\triangle^{n, \text{good}}))\right).
    \end{align}
    As observed preceding Lemma \ref{lem:minimalseparator}, we have $\triangle^{n, \text{cur}}  \subset U_{l_n+ h_n}(\Gamma_n(t))$. \EEE For this part, \EEE we can apply Corollary~\ref{thecorollary}. Additionally, since $\bigcup_i U_{2\varepsilon_n}(\Phi^{n, \text{new}}_i)\supset U^{\mathcal{T}}_n \EEE (\triangle^{n, \text{new}})$ for $n$ large enough, \EEE we can apply \eqref{eq:corollaryminimalseparator} for $\triangle^{n, \text{new}}$. Since by definition we have the identity $\triangle^{n, \text{bad}}=\triangle^{n, \text{cur}}\cup \triangle^{n, \text{new}}$, this suffices to estimate $\L^d(U^\T_n(\triangle^{n, \text{bad}})\setminus U^\T_n(\Gamma_n(t)))$.
\color{black}
Since $U^{\mathcal{T}}_n(\triangle^{n, \text{good}})\subset U_{\varepsilon_n+ h_n \EEE }(J_{\phi}  \setminus  B_{\rm bad} \EEE ) \EEE $  (recall $\sup\{\text{diam } T: T\in \T_{h_n}\}\leq h_n$ by  \eqref{eq:bound_triangulation} and use \EEE \eqref{eq:trianglegoodcapQi}),  \EEE  we can deduce with \eqref{biggerneighbourhoodfortheta}--\eqref{eq:UlGamman}, \eqref{eq:corollaryminimalseparator}, \eqref{eq:estimatelateral}, and \eqref{eq:estimategoodconvergence} that  
    \begin{align*}
        &\limsup_{n\to\infty}\frac{1}{2\varepsilon_n}\left(\L^d\left(U^{\mathcal{T}}_n(\triangle^{n, \text{bad}})\setminus U^{\mathcal{T}}_n( \Gamma_n(t))\right)+\L^d\left( U^{\mathcal{T}}_n \EEE (\triangle^{n. \text{lat}})\right)+\L^d(U^{\mathcal{T}}_n(\triangle^{n, \text{good}})\right) \\
        \leq\,&\limsup_{n\to\infty}\frac{1}{2\varepsilon_n}\bigg(\L^d(U_{\varepsilon_n+2\theta\varepsilon_n}(\bruchdiscepsilonn(t))\setminus U^{\mathcal{T}}_n(\Gamma_n(t)\EEE))+\L^d\left({\bigcup}_i U_{2\varepsilon_n}(\Phi^{n, {\rm new}}_i)\right) \\
        &+\L^d\left({\bigcup}_i U_{2\varepsilon_n}(  \partial  Q_i \cap R_{n, i} \EEE ) \EEE \right)+\L^d\left(U_{\varepsilon_n+ h_n}\left(J_{\phi}\setminus   B_{\rm bad} \EEE\right) \right)\bigg) \\
        \leq\,& \limsup_{n\to\infty}\frac{1}{2\varepsilon_n}\left(C\theta \varepsilon_n+C  \theta \EEE \varepsilon_n+C\theta \varepsilon_n+\L^d\left(U_{\varepsilon_n+ h_n}\left(J_{\phi}\setminus   B_{\rm bad} \EEE\right) \EEE  \right)\right)         \le   \mathcal{H}^{d-1}\left(J_\phi\setminus   B_{\rm bad} \EEE \right)  + C\theta. \EEE
    \end{align*}
  Now, as $\mathcal{H}^{d-1}( (J_u \cap J_\phi) \EEE \setminus B_\text{bad} \EEE  )\leq C\theta$  by \EEE \eqref{eq:propertiesbesicovitchcovering} and \eqref{eq:goodcubesestimate}, \EEE we can   conclude  by \eqref{ainelettegleichung} \EEE
    \begin{align*}
    \limsup_{n \to \infty} \frac{1}{2\varepsilon_n}\L^d\left(U^\T_n(\{\gamma^\phi_n\neq 0\})\setminus U^\T_n(\Gamma_n(t))\right)   & \le   \mathcal{H}^{d-1}\left(J_\phi\setminus  B_{\rm bad} \EEE \right) + C\theta \EEE \le  \mathcal{H}^{d-1}(J_\phi\setminus J_u)+C\theta.
    \end{align*}
This shows the second part of \eqref{only thing to  check}, where we recall that $u=u(t)$. \EEE

Concerning the elastic part of the energy, we first observe that the estimates above lead to $\L^d(\triangle^n)\le C \varepsilon_n$.   We have $\nabla \EEE \phi_n-\gamma^\phi_n =0$ on $\triangle^n$, see \eqref{finallygamma},  $\phi'_n = \phi'_{n,i}$ in each bad cube  $Q_i\in \mathcal{Q}_{\text{bad}}$, and $\phi'_n = \phi$ everywhere else. \EEE   As outside of the triangles $\triangle^n$ the function  $\phi'_n$ has no jump,  see \eqref{finallygamma0}, and $\phi_n$ is the linear interpolation of $\phi_n'$ \EEE with respect to $\T_{h_n}(\Omega')$, we know that  $\|\nabla \phi_n\|_{L^\infty(\Omega' \setminus \triangle^n)}$ \EEE can be estimated by $C \| \nabla \phi_n' \EEE \|_{L^\infty(\Omega')}$.  In view of \eqref{eq.bbb}, $\|\nabla \phi_n\|_{L^\infty(\Omega' \setminus \triangle^n)}$ can thus be controlled by  $C \|   \nabla \phi  \|_{L^\infty(\Omega')}$. \EEE  Since we have $\phi\in W^{2, \infty}( \Omega' \setminus J_\phi \EEE  )$, it can be further shown that $\|\nabla \phi_n-\nabla \phi\|_{L^\infty(\Omega'\setminus (B_\text{bad} \cup \triangle^n) \EEE  )}\leq C h_n$. Thus, we get  
\begin{align*}
\limsup_{n\to\infty}  \int_\Omega |\nabla \phi_n-\gamma_n^\phi - \nabla \phi|^2\,{\rm d}x & = \limsup_{n\to\infty} \Big( \int\limits_{\triangle^n}|\nabla \phi|^2\,\derx +  \hspace{-0.3cm}\int\limits_{\Omega\setminus (B_\text{bad} \cup \triangle^n ) \EEE }  \hspace{-0.8cm}|\nabla \phi_n-\nabla \phi|^2\,\derx + \int\limits_{ B_\text{bad} \EEE  \setminus \triangle^n} \hspace{-0.4cm}|\nabla \phi_n-\nabla \phi|^2\,\derx  \Big)\\ &
\leq  C\varepsilon_n+ C h_n^2 \EEE + C \theta \EEE \leq C \theta, \EEE
\end{align*} 
where for the second step we used \EEE  $\L^d( B_\text{bad} \EEE ) \le  C\theta \EEE $ (see   \eqref{eq:propertiesforcubes}), the fact that $(\nabla \phi_n)_n$  are bounded in $L^{\infty}(\Omega' \setminus \triangle^n;\R^d)$, $\nabla \phi \in  L^{\infty}(\Omega';\R^d)$, \EEE  as well as  $\L^d(\triangle^n)\le C\eps_n$. \EEE    
This shows the first part of \eqref{only thing to check} and concludes the proof. 
\end{proof}

\section{Proof of further results}
\label{section:AlternativeSettings}

In this section we explain the adaptations which are necessary to obtain the results announced in Subsection \ref{sec:further}.

\begin{proof}[Sketch of the proof of Theorem \ref{th: simlutaneous}]
 With the notation introduced in Subsection \ref{sec:further}, the bound in \eqref{eq:ineqtimeintn} reads as  \EEE 
\begin{equation}\label{energyestima}
    \begin{aligned}
        \mathcal{E}_n( \hat{u}_n  (t), \hat{\gamma}_n(t), \hat{\Gamma}_n(t) \EEE )\leq\mathcal{E}_n(\hat{u}_n(0), \hat{\gamma}_n(0), \emptyset) +2\int_0^t\int_\Omega (\nabla \hat{u}_n(s)-\hat{\gamma}_n(s)) \cdot \partial_t\nabla g_{n} \EEE (s)\,\derx\,\ders+e(n),
    \end{aligned}    
\end{equation}
with $e(n) \to 0$ as $n \to \infty$.   We note that  the calculation   is \EEE  done with the integral from $0$ to $ t^n_i \EEE $ with $ t_i^n \EEE \leq t< t^n_{i+1} \EEE $ for $ t^n_i, t^n_{i+1} \EEE \in I_n$ instead of $0$ to $t$, but due to the regularity of  $g_{n}$ \EEE and the bounds on $\hat{u}_n$ and $\hat{\gamma}_n$, we can absorb this error in the $e(n)$-term. \EEE  With the same argument as in \eqref{eq:energybound} one can also show
\begin{align*}
    \mathcal{E}_n(\hat{u}_n(t), \hat{\gamma}_n(t), \hat{\Gamma}_n(t))+\|\hat{u}_n(t)\|_{L^\infty(\Omega')}\leq C_2
\end{align*}
with $C_2$ being independent of $t$ and $n$. With these estimates, we can  follow closely the lines of the proof in Section \ref{chapter:Limitepsilontozero}.  The Lemmas \ref{lem:truncationIinfinity}--\ref{lem:alternativetruncation}, \ref{lemma:firstsummary}   remain unchanged while in \EEE the proof of the stability result Theorem~\ref{theorem:stabilityresulteps} we employ \eqref{eq:minepsi+1} in place of \eqref{eq:3.3}. (The construction in Section \ref{section:stabilityresult} yields a sequence $(\phi_n)_n$ with boundary conditions $g(t)$ and not with $g(s_n)$, where $s_n \in I_n$ is the largest time with $s_n \le t$. Therefore, strictly speaking,  we need to replace the sequence $(\phi_n)_n $ by $\phi_n -g(t) + g(s_n)$, which causes no problems due to the regularity of $g$.) Also Theorem \ref{theorem:continuationtheorem} and Lemma \ref{lem:convergenceoutsideIinfinity} can be derived in the exactly same way. Finally, for the analog of the proof of Theorem \ref{theorem:griffithmin},   the only difference lies in the derivation of the energy balance and the energy convergence since instead of an energy balance along the sequence (cf.\ \eqref{eq:3.4}) we only have the energy estimate \eqref{energyestima}. For this reason, in place of the equality in  \eqref{lowi}, we only get the inequality 
\color{black}
 \begin{align*} 
        \lim_{n\to\infty}  \mathcal{E}_{n}(\hat{u}_n(t), \hat{\gamma}_n(t), \hat{\Gamma}_n(t))   \le  
        \,& \,\mathcal{E}(0)+2\int_0^t\int_\Omega \nabla u(s)\cdot \partial_t\nabla  g(s)\,\derx\,\ders \quad \text{ for all $t \in [0,1]$,} 
    \end{align*}
where we employed (the analogous versions of) \eqref{eq:energyconvergencetimezero} and \eqref{analog2}.

By lower semicontinuity (see  Lemma~\ref{lemma:firstsummary} and Lemma~\ref{lem:convergenceoutsideIinfinity}) \EEE this implies 
$${ \mathcal{E}(t)   \le       \liminf_{n\to\infty}  \mathcal{E}_{n}(\hat{u}_n(t), \hat{\gamma}_n(t), \hat{\Gamma}_n(t)) \le \EEE
        \mathcal{E}(0)+2\int_0^t\int_\Omega \nabla u(s)\cdot \partial_t\nabla  g(s)\,\derx\,\ders  }
 $$
for all $t \in [0, 1]$, \EEE i.e., the same estimate as in \eqref{lsc}.      From this point on, we can follow the proof of Theorem~\ref{theorem:griffithmin} to conclude.  \EEE
\end{proof} 

\begin{proof}[Sketch of the proof of Theorem 
\ref{theorem:griffithminNondiscrete}]

Concerning the proof of Theorem \ref{theorem:griffithminNondiscrete},  we can almost verbatim repeat \EEE the proof of Theorem \ref{th: simlutaneous}   as \EEE both cases are conceptually the same, just in different function spaces.   In fact, the proof for the stability result (see Theorem  \ref{theorem:stabilityresulteps} for the analog) is even easier as we do not have to deal with the neighborhoods
$U_{\eps_m,h}^{\mathcal{T}}(\cdot)$ but only with neighborhoods $U_{\eps_m}(\cdot)$, i.e.,  the additional simplices added due to the triangulation are not needed.   
\end{proof}
\EEE
\section*{Acknowledgements} 
This research was funded by the Deutsche Forschungsgemeinschaft (DFG, German Research Foundation) - 377472739/GRK 2423/2-2023. The authors are very grateful for this support.
 

\appendix
 
\section{Remaining proofs}\label{sec:appendix}
 
In this appendix, we present the remaining proofs that have been omitted  in the paper. \EEE
\begin{theorem}[$SBV$-compactness]
    \label{theorem:sbvcompactness}
     Let $(u_k)_{k\in \N}$ be a sequence in $SBV^2(\Omega')$ such that there exists $c> 0$ with 
     \begin{align*}
         \int_{\Omega'} |\nabla u_k|^2 \,\derx+\mathcal{H}^{d-1}(J_{u_k})+\|u_k\|_{L^\infty(\Omega')}\leq c
     \end{align*}
     for every $k\in \N$. Then, there exists a subsequence $(u_{k_l})_l$ and a function $u\in SBV^2(\Omega')$ such that, as $l \to \infty$, \EEE 
     \begin{equation}
         \begin{aligned}
             \label{eq:sbvcompactnessresult}
             u_{k_l}&\to u \emph{ strongly in }L^1(\Omega'), \\
             \nabla u_{k_l}&\rightharpoonup \nabla u \emph{ weakly in }L^2(\Omega; \R^d), \\
             \mathcal{H}^{d-1}(J_u)&\leq \liminf_{l\to\infty} \mathcal{H}^{d-1}(J_{u_{k_l}}).
         \end{aligned}
     \end{equation}
\end{theorem}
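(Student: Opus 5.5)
This is Ambrosio's classical $SBV$ compactness theorem, quoted from \cite[Theorem 4.8]{AmbrosioFuscoPallara2000}. The plan follows the standard route: first obtain $BV$ compactness, then show the limit is special by the chain-rule characterization, and finally derive the two lower semicontinuity statements.

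\textbf{Step 1 ($BV$ bound and $L^1$ convergence).} By Cauchy--Schwarz, $\int_{\Omega'}|\nabla u_k|\,\derx \le \L^d(\Omega')^{1/2}c^{1/2}$. The jump part satisfies
$$|D^j u_k|(\Omega') \le 2\|u_k\|_{L^\infty}\,\mathcal{H}^{d-1}(J_{u_k}) \le 2c^2.$$
Hence $(u_k)$ is bounded in $BV(\Omega')$. By the compact embedding $BV \hookrightarrow L^1$ on bounded Lipschitz sets, a subsequence converges strongly in $L^1(\Omega')$ and weakly* in $BV$ to some $u \in BV(\Omega')\cap L^\infty$.

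\textbf{Step 2 (the limit is in $SBV$; the main obstacle).} I would use the characterization that $u\in SBV$ if and only if, for every $\psi\in C^1_c(\R)$ and every Lipschitz $\varphi\in C^1$ with bounded derivative, the measure $D(\varphi(u))$ satisfies
$$D(\varphi(u)) = \varphi'(u)\nabla u\,\L^d + (\varphi(u^+)-\varphi(u^-))\nu_u\,\mathcal{H}^{d-1}|_{J_u}.$$
Equivalently, I would pass to the limit in the identity
$$\int \psi(x)\,\big(\nabla\varphi(u_k) - \varphi'(u_k)\nabla u_k\big)\,\derx$$
tested against the jump measures. The key estimate is that $|D^j\varphi(u_k)|\le \|\varphi'\|_\infty\cdot 2c\,\mathcal{H}^{d-1}(J_{u_k})$ is uniformly bounded. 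Combined with the superlinear integrability of $\nabla u_k$ (an $L^2$ bound, hence equi-integrability), this shows that no Cantor part can form in the limit. Concretely:
\begin{itemize}
\item $\nabla u_k \rightharpoonup w$ weakly in $L^2$;
\item $\varphi'(u_k)\nabla u_k \rightharpoonup \varphi'(u)w$, using strong convergence of $\varphi'(u_k)$ and the weak convergence above;
\item the jump parts $D^j\varphi(u_k)$ have total variations bounded by $C\mathcal{H}^{d-1}(J_{u_k})$.
\end{itemize}
A blow-up or density argument then gives $D^c u = 0$ and $\nabla u = w$. This is the delicate step, and I would follow Ambrosio's proof as presented in \cite{AmbrosioFuscoPallara2000}.

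\textbf{Step 3 (lower semicontinuity).} The identification $\nabla u = w$ from Step 2 yields weak $L^2$ convergence of the gradients, and by lower semicontinuity of the norm $\nabla u\in L^2$, so $u \in SBV^2$. For the jump set, I would use Ambrosio's lower semicontinuity for the functional $\int_{J_u}\theta(u^+,u^-)\,d\mathcal{H}^{d-1}$ with $\theta\equiv 1$, which holds under the same hypotheses (bounded $L^2$ gradient norms). This gives $\mathcal{H}^{d-1}(J_u)\le\liminf_k \mathcal{H}^{d-1}(J_{u_k})$, completing \eqref{eq:sbvcompactnessresult}.
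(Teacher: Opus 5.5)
Citing \cite[Theorem 4.8]{AmbrosioFuscoPallara2000} is exactly what the paper does: it recalls the theorem in the appendix without proof. Your Steps 1 and 3 are fine. If the sketch is meant to explain why the theorem holds, however, Step 2, the only nontrivial step, rests on the wrong estimate.

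Your bound $|D^j\varphi(u_k)|\le 2c\|\varphi'\|_\infty\,\mathcal{H}^{d-1}|_{J_{u_k}}$ only gives, in the limit, $|D\varphi(u)-\varphi'(u)w\,\mathcal{L}^d|\le 2c\|\varphi'\|_\infty\,\mu$, where $\mu$ is a weak$^*$ limit of $\mathcal{H}^{d-1}|_{J_{u_k}}$. This cannot exclude a Cantor part. By the $BV$ chain rule, $D^c\varphi(u)=\varphi'(\tilde u)D^cu$ scales with $\|\varphi'\|_\infty$ exactly as the jump part does. So every $u\in BV\cap L^\infty$ satisfies such an inequality with $w=\nabla u$ and $\mu$ a multiple of $|D^su|$. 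Knowing that $\mu$ is a limit of surface measures adds nothing for $d\ge 2$, because any finite positive measure is a weak$^*$ limit of $\mathcal{H}^{d-1}$ restricted to clouds of tiny spheres. Hence neither $D^cu=0$ nor even $\nabla u=w$ follows. Only for $d=1$, where $\mu$ has at most $c$ atoms, would your estimate suffice. Also, the chain-rule identity you call a characterization is a property of $SBV$ functions, not a criterion you can check for $u$ without already knowing its jump part.

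The missing idea is that the jumps of $\varphi(u_k)$ are controlled by the \emph{sup norm} of $\varphi$, not by its Lipschitz constant. Regardless of the jump heights of $u_k$, one has $|D^j\varphi(u_k)|\le 2\|\varphi\|_\infty\,\mathcal{H}^{d-1}|_{J_{u_k}}$. This is where the bound on $\mathcal{H}^{d-1}(J_{u_k})$ is really used, rather than a bound on $|D^ju_k|(\Omega')$. Passing to the limit as you describe gives $|D\varphi(u)-\varphi'(u)w\,\mathcal{L}^d|\le 2\|\varphi\|_\infty\,\mu$ for every $\varphi\in C^1(\R)$ with bounded derivative.

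Now restrict to $\Omega'\setminus J_u$ and set $\lambda:=(\nabla u-w)\mathcal{L}^d+D^cu$. The chain rule turns the inequality into $|\varphi'(\tilde u)|\,|\lambda|\le 2\|\varphi\|_\infty\,\mu$. Test with $\delta\sin(\cdot/\delta)$ and $\delta\cos(\cdot/\delta)$ and add, using $|\sin|+|\cos|\ge 1$; this gives $|\lambda|\le 4\delta\mu$. Letting $\delta\to 0$ yields $\lambda=0$, so $u\in SBV(\Omega')$ and $\nabla u=w$. This is the content of Ambrosio's $SBV$ characterization underlying \cite[Theorem 4.8]{AmbrosioFuscoPallara2000}.
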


The convergence in \eqref{eq:sbvcompactnessresult} is called  \emph{$SBV^2$-convergence}. \EEE We begin with the proof of Lemma \ref{lem:bothinequalitesdisc}, and postpone the proof of Lemma \ref{lem:PointwiseConvergence} to the end of the section. \EEE

\begin{proof}[Proof of Lemma \ref{lem:bothinequalitesdisc}]
The first inequality has already been addressed in  \eqref{firstinequ}, so we focus on the other inequality.    For fixed $t\in(0, 1]$, given $\eta >0$, we apply \cite[Lemma 4.12, Remark 4.13]{dMasoFrancfortToader2005} to find  a finite partition   $(s_i)^k_{i=0}$  of $[0,t]$ satisfying  $|s_{i+1} - s_{i}|\le\eta$ for all $i=0,\ldots,k-1$  such that  
\begin{align}\label{Riemann}
{\rm (i)} & \ \ \sum_{i=0}^{k-1} \int_{s_{i}}^{s_{i+1}}  \Vert \big(\nabla u_\eps(s_{i+1}) - \gamma_\eps(s_{i+1}) \big) \cdot \partial_t \nabla  g_h \EEE (s_{i+1})   -  \big(\nabla u_\eps(\tau) - \gamma_\eps(\tau) \big) \cdot \partial_t \nabla  g_h \EEE (\tau) \Vert_{L^2(\Omega)}  \, {\rm d}\tau  \le \eta,\notag \\
{\rm (ii)} & \ \  \sum_{i=0}^{k-1} \int_{s_{i}}^{s_{i+1}}  \Vert   \partial_t \nabla  g_h \EEE (s_{i+1})  -   \partial_t \nabla  g_h \EEE (\tau)\Vert_{L^2(\Omega)}  \, {\rm d}\tau  \le \eta  . 
\end{align}
Using Lemma \ref{lem:stabdisc} with the competitor $(u_\varepsilon(s_{i+1})- g_h \EEE (s_{i+1})+  g_h \EEE (s_{i}), \gamma_\varepsilon(s_{i+1}))$ for $i \in \lbrace 0,\ldots, k-1\rbrace$, we obtain
    \begin{align*}
        &\E_\varepsilon(u_\varepsilon(s_{i}), \gamma_\varepsilon(s_{i}), \Gamma_\varepsilon(s_{i}))   \leq \E_\varepsilon\Big(u_\varepsilon(s_{i+1})-g_h(s_{i+1})+g_h(s_{i}), \gamma_\varepsilon(s_{i+1}), \Gamma_\varepsilon(s_{i})\cup \{\gamma_\varepsilon(s_{i+1})\neq 0\}\Big).
    \end{align*}
 Using 
    $\Gamma_\varepsilon(s_{i})\cup \{\gamma_\varepsilon(s_{i+1})\neq 0\}\subset \Gamma_\varepsilon(s_{i+1})$, expanding the elastic energy, rearranging the terms, and employing Minkowski's integral inequality  we find 
    \begin{align*}
        &\E_\varepsilon(u_\varepsilon(s_{i+1}), \gamma_\varepsilon(s_{i+1}), \Gamma_\varepsilon(s_{i+1}))-\E_\varepsilon(u_\varepsilon(s_{i}), \gamma_\varepsilon(s_{i}), \Gamma_\varepsilon(s_{i})) \\ \geq\, &2\int_\Omega(\nabla u_\varepsilon(s_{i+1})-\gamma_\varepsilon(s_{i+1})) \cdot (\nabla  g_h \EEE(s_{i+1})-\nabla  g_h \EEE(s_{i}))\,\derx -
        \int_\Omega|\nabla  g_h \EEE(s_{i+1})-\nabla  g_h \EEE(s_{i})|^2\,\derx \\
        \geq 
        \,&2\int_\Omega(\nabla u_\varepsilon(s_{i+1})-\gamma_\varepsilon( s_{i})) \EEE \cdot (\nabla  g_h \EEE(s_{i+1})-\nabla  g_h \EEE(s_{i+1}))\,\derx-
        \sigma (\eta)\int_{s_{i}}^{s_{i+1}}\|\partial_t\nabla   g_h \EEE(s)\|_{L^2(\Omega)}\,\der{s}
    \end{align*}
    with $\sigma(r):= \sup_{t_2-t_1=r}\int_{t_1}^{t_2}\|\partial_t\nabla  g_h \EEE(s)\|_{L^2(\Omega)}\,\der{s}$, where in the last step we used  $|s_{i+1} - s_{i}|\le\eta$. \EEE Summing over $i \in \lbrace 0,\ldots, k-1\rbrace$ we deduce   
        \begin{align}\label{pluugy}
            \E_\varepsilon(u_\varepsilon(t), \gamma_\varepsilon(t), \Gamma_\varepsilon(t)) 
          &  \geq\, \E_\varepsilon(u_\varepsilon(0), \gamma_\varepsilon(0), \Gamma_\varepsilon(0))+
            \Pi  - \EEE \sigma\left(\eta\right)\int_0^t\|\nabla \partial_t g_h(s)\|_{L^2(\Omega)}\,\ders.
        \end{align}
        where for shorthand we define
        $$ \Pi := \sum_{i=0 }^{k-1}2\int_\Omega(\nabla u_\varepsilon(s_{i+1})-\gamma_\varepsilon(s_{i+1})) \cdot \int_{s_{i}}^{s_{i+1}}\nabla \partial_t g_h(s)\,\der{s}\,\derx.$$
 For the integral $\Pi$, using  first   \eqref{Riemann}(ii) along with the fact that $\Vert \nabla u_\varepsilon(s_{i+1})-\gamma_\varepsilon(s_{i+1}) \Vert_{L^2(\Omega)} \le C$ for some $C$ depending only  on $g$, $h$, and $\eps$ (by passing to the limit in \eqref{apriori bound} and using \eqref{eq:identityyni}),   and then  \eqref{Riemann}(i) we get 
\begin{align*}
\Pi & \ge  \sum_{i=0 }^{k-1}  2 \EEE \int_\Omega  \int_{s_{i}}^{s_{i+1}} (\nabla u_\varepsilon(s_{i+1})-\gamma_\varepsilon(s_{i+1})) \cdot \nabla \partial_t g_h(s_{i+1})\,\der{s}\,\derx - C\eta \\ 
& \ge   2 \EEE \int_0^t\int_\Omega(\nabla u_\varepsilon(s)-\gamma_\varepsilon(s)) \cdot \nabla\partial_t  g_h \EEE(s)\,\derx\,\ders  - \eta  - C\eta.
\end{align*} 
Plugging this into  \eqref{pluugy} and using that $\sigma\left(\eta\right)\to 0$ for $\eta\to 0 $,    the desired statement follows in the limit $\eta \to 0$. 
\end{proof}

We proceed with the proofs of Theorem \ref{theorem:continuationtheorem} and Lemma \ref{lem:convergenceoutsideIinfinity}.

\begin{proof}[Proof of Theorem \ref{theorem:continuationtheorem}]
First, the fact that $u(t)\in SBV^2(\Omega')$ with $u(t) = g(t) $ on $\Omega' \setminus \overline{\Omega}$  follows from the compactness result in Theorem \ref{theorem:sbvcompactness} along with  the uniform bound \eqref{eq:truncationIinfinitybound} \EEE at each time $s \in I_{\infty,0}$.  Properties \eqref{eq:jumpisingamma} and \eqref{eq:minimalityentiretimeinterval} follow as in the proof of \cite[Lemma 3.8]{FrancfortLarsen2003}, where we particularly employ \cite[Corollary 2.10]{FrancfortLarsen2003} along with  Theorem \ref{theorem:stabilityresulteps}, as well as Theorem \ref{theorem:sbvcompactness}.

 Then, $\nabla u\in L^\infty([0, 1]; L^2(\Omega'; \R^d))$, the fact that $\nabla u$ is left continuous in $[0, 1]\setminus I_{\infty,0}$ with respect to the strong $L^2(\Omega';\R^d)$-topology, and the energy inequality \eqref{eq:energyequalityentiretimeinterval} follow exactly as in the proof of  \cite[Propostion 5.9]{Giacomini2005}, by exploiting  \eqref{eq:minimalityentiretimeinterval}. Indeed, we note that all properties stated so far exclusively rely on the limiting problem, and are completely independent of the $\eps$-problem, whence we can follow the proof in \cite{Giacomini2005}.

Next, \eqref{eq:minimalityzeroentiretimeinterval} is a consequence of the $\Gamma$-convergence result in \cite[Theorem 5.1]{SchmidtFraternaliOrtiz2009} and Theorem \ref{theorem:approx}(i). Finally, \eqref{eq:lambdainequalityentiretimeinterval} follows from Lemma \ref{lemma:firstsummary}. 
\end{proof}

\begin{proof}[Proof of Lemma \ref{lem:convergenceoutsideIinfinity}] 
\label{proof:Lemma4.8}
 As the statement has been proven already for $t\in I_{\infty,0} \EEE $ in Lemma \ref{lem:truncationIinfinity}, we only consider $t\in [0, 1]\setminus  I_{\infty,0}$. Consider  $s\in I_{\infty,0}$ with $s<t$. (Eventually, we will send $s\nearrow t$.) We recall the  definition \EEE $ \Gamma_n \EEE (t):= \bigcup_{ \tau \in I^t_{\infty,\eps_n}} \EEE \{  \gamma_n \EEE (\tau)\neq 0\}$  preceding \EEE Lemma \ref{lem:stabilityjumpset}. \EEE We define the minimum problem
    \begin{align*}
        J:= \inf\left\{\int_{\Omega\setminus \Gamma_n(t)}|\nabla z|^2\,\derx:z\in V\hn(\Omega'), \  z=g_n(s) \text{ on }\Omega' \setminus \overline{\Omega}\right\}
    \end{align*}
    and denote the (unique) minimizer of this problem by $w_n(s, t)$. We observe that $u_n(t)-w_n(s, t)$ is the unique minimizer of  
    \begin{align*}
        L:= \inf\left\{\int_{\Omega\setminus \Gamma_n(t)}|\nabla z|^2\,\derx:z\in V\hn(\Omega'), \  z=g_n(t)-g_n(s) \text{ on }\Omega' \setminus \overline{\Omega}\right\},
    \end{align*}
    due to minimality in the separate problems.   (Use \eqref{eq:3.3} at time $t$ for $\bar{\gamma} =  \gamma_{n}(t)$.) \EEE       As $g_n(t)-g_n(s)$ is admissible for $L$, we have 
    \begin{equation}
    \begin{aligned}
        \label{eq:estimateuminuswcontraghn}
            &\int_{\Omega\setminus \Gamma_n(t)}|\nabla u_n(t)-\nabla w_n(s, t)|^2\,\derx 
            \leq 
            \int_{\Omega\setminus \Gamma_n(t)}|\nabla g_n(t)-\nabla g_n(s)|^2\,\derx.
    \end{aligned}
    \end{equation}
  We find $\int_{\Omega\setminus \Gamma_n(t)} \nabla w_n(s, t) \cdot \nabla (u_n(s)-w_n(s, t)) \, {\rm d}x = 0$ since $u_n(s)-w_n(s, t)$ is an admissible test function for $J$. Thus, \EEE  
    \begin{align}
        \label{eq:equalityuminusw}
        &\int_{\Omega\setminus \Gamma_n(t)}|\nabla u_n(s)-\nabla w_n(s, t)|^2\,\derx
        =\int_{\Omega\setminus \Gamma_n(t)}\left(|\nabla u_n(s)|^2-|\nabla w_n(s, t)|^2\right)\,\derx. 
    \end{align}
    Now, let us note that $\left(w_n(s, t), \nabla w_n(s, t)\mathds{1}_{ \Gamma_n(t)}\right)$ is an admissible competitor in \eqref{eq:3.3} \EEE at time $s\in I_{\infty,0}$.  \EEE Therefore,    recalling \eqref{eq:lambdaepsilondefinition},  
    \begin{align}
        \label{eq:u(s)pluslambda(s)leqwnpluslambda(t)}
        &\int_{\Omega\setminus \Gamma_n(s)}|\nabla u_n(s)|^2\,\derx+\lambda_n(s)
        \leq\int_{\Omega\setminus \Gamma_n(t)}|\nabla w_n(s, t)|^2\,\derx+\lambda_n(t).
    \end{align}
Since $\Gamma_n(s)\subset \Gamma_n(t)$ by Theorem \ref{theorem:approx}(ii),  we clearly have \EEE    
    \begin{align}
        \label{eq:intwithoutGamma(t)smallerthanintwithouGamma(s)}
        &\int_{\Omega\setminus \Gamma_n(t)}|\nabla u_n(s)|^2\,\derx
        \leq\int_{\Omega\setminus \Gamma_n(s)}|\nabla u_n(s)|^2\,\derx.
    \end{align}
    Taking \eqref{eq:u(s)pluslambda(s)leqwnpluslambda(t)} and \eqref{eq:intwithoutGamma(t)smallerthanintwithouGamma(s)}, subtracting $\int_{\Omega\setminus \bruchdiscepsilonn(t)}|\nabla w_n(s, t)|^2\,\derx+\lambda_n(s)$,  and using the identity \eqref{eq:equalityuminusw}, we obtain  
    \begin{equation}
    \begin{aligned}
        \label{eq:estimateuminuswcontralambda}
        \int_{\Omega\setminus \Gamma_n(t)}|\nabla u_n(s)-\nabla w_n(s, t)|^2\,\derx
        \leq\,&\lambda_n(t)-\lambda_n(s).
    \end{aligned}
    \end{equation}
    Putting (\ref{eq:estimateuminuswcontraghn}) and (\ref{eq:estimateuminuswcontralambda}) together, we discover 
    \begin{equation*}
    \begin{aligned}
        &\int_{\Omega\setminus \Gamma_n(t)}\left|\nabla u_n(t)-\nabla u_n(s)\right|^2\,\derx
        \leq
        C\,\|\nabla g_n(t)-\nabla g_n(s)\|_{L^2(\Omega)}^2 + C(\lambda_n(t)-\lambda_n(s)).
    \end{aligned}
    \end{equation*}
 Recalling the definition of $Q_n(t)$ preceding \eqref{eq:jumpsetestimate}, in particular   \EEE  $Q_n(t)\supset  \Gamma_n(t)$, we conclude that 
    \begin{align*}
        \|\nabla u_n(t)\mathds{1}_{(Q_n(t))^c}-\nabla u_n(s)\mathds{1}_{(Q_n(t))^c}\|_{L^2(\Omega)}
        \leq\,&\int_{\Omega\setminus \Gamma_n(t)}\left|\nabla u_n(t)-\nabla u_n(s)\right|^2\,\derx \\
        \leq\,&\,
        C\,\|\nabla g_n(t)-\nabla g_n(s)\|_{L^2(\Omega)}^2 + C(\lambda_n(t)-\lambda_n(s)).
    \end{align*}
  
    Consequently, the right-hand side converges to zero for $s\to t$, since $g_n$ is absolutely continuous and $\lambda_0$, being the pointwise limit of $\lambda_n$, has a continuity point in $t$ by assumption.  
  This means that  
    \begin{align}
    \label{eq:strongconvergencenablauen}
 \lim_{s\to t}  \limsup_{n \to \infty} \Vert \nabla u_n(t)\mathds{1}_{(Q_n(t))^c}  - \nabla u_n(s)\mathds{1}_{(Q_n(t))^c}  \Vert_{L^2(\Omega)}
 = 0.
    \end{align}
We also recall that, by Lemma \ref{lem:alternativetruncation},
\begin{align}\label{eq:alternativetruncation, it is later}
\nabla  u_n(s)\mathds{1}_{(Q_n(t))^c} \rightharpoonup \nabla u(s)  \quad  \text{weakly in }L^2(\Omega;\R^d).
\end{align}
We write 
\begin{align*}
 \nabla u_n (t)\mathds{1}_{(Q_n(t))^c} - \nabla u(t) & =   \big( \nabla u_n (t)\mathds{1}_{(Q_n(t))^c}  - \nabla u_n(s)\mathds{1}_{(Q_n(t))^c} \big)   +  \big(  \nabla u_n(s)\mathds{1}_{(Q_n(t))^c}    - \nabla u(s) \big) \notag  \\ & \ \ \ +  \big( \nabla u(s) - \nabla u(t) \big).
\end{align*}
Combining \eqref{eq:strongconvergencenablauen}, \eqref{eq:alternativetruncation, it is later}, and the left continuity of $\{\tau\mapsto \nabla u(\tau)\}$ at $[0, 1]\setminus I_{\infty,0}$ with respect to the strong topology in $L^2(\Omega; \R^d)$ (see Theorem \ref{theorem:continuationtheorem}) this shows $\nabla u_n (t)\mathds{1}_{(Q_n(t))^c}\rightharpoonup\nabla u(t)$ weakly in $L^2(\Omega;\R^d)$.  This concludes the proof. \EEE  
\end{proof}


\EEE

 We close \EEE with the proof of Lemma \ref{lem:PointwiseConvergence}.
\begin{proof}[Proof of Lemma \ref{lem:PointwiseConvergence}]
    As explained below  Lemma \ref{lem:PointwiseConvergence},  $\nabla u^m_\varepsilon(s)\mathds{1}_{(\Gamma^m_\varepsilon(s))^c}\to \nabla u_\varepsilon(s)-\gamma_\varepsilon(s)$ holds already \EEE for $s\in I_{\infty,\eps} \EEE $. Therefore, we only have to prove the statement for $t\notin I_{\infty,\eps}\EEE$. Consider $t\notin I_{\infty,\eps}$ and $s\in I_{\infty,\eps}$ with $s\nearrow t$.
    We define the minimization problem 
    \begin{align*}
        J:= \inf\left\{\int_{\Omega\setminus \Gamma^m_\varepsilon(t)}|\nabla z|^2\,\derx:z\in V_h(\Omega'), \, z= g^m_\varepsilon(s) \text{ on } \Omega'\setminus\overline{\Omega}\right\}
    \end{align*}
    and denote its unique minimizer by $w_m(s, t)$. We observe that $u^m_\varepsilon (t) \EEE -w_m(s, t)$ is the minimizer of 
    \begin{align*}
        L:= \inf\left\{\int_{\Omega\setminus \Gamma^m_\varepsilon(t)}|\nabla z|^2\,\derx:z\in V_h(\Omega'), \, z=g^m_\varepsilon(t)-g^m_\varepsilon(s) \text{ on }\Omega'\setminus \overline{\Omega}\right\}
    \end{align*}
    due to minimality in the separate problems. Then, we can repeat the reasoning used \EEE in the proof of Lemma \ref{lem:convergenceoutsideIinfinity} to obtain  
    \begin{align*}
        \int_{\Omega\setminus \Gamma^m_\varepsilon(t)}|\nabla u^m_\varepsilon(t) - \nabla u^m_\varepsilon(s)|^2\,\derx \leq C\|\nabla g^m_\varepsilon(t)-\nabla g^m_\varepsilon(s)\|^2_{L^2(\Omega)}+C(\lambda^m_\varepsilon(t)-\lambda^m_\varepsilon(s)),
    \end{align*}
    where $\lambda^m_\varepsilon$ was defined   preceding \EEE \eqref{iinftyeps}. \EEE Recall that $\|\nabla g^m_\varepsilon(t)-\nabla g^m_\varepsilon(s)\|^2_{L^2(\Omega)}\to 0$ for $s\nearrow t$ due to the regularity properties of $g$. Additionally, at each  $t\notin I_{\infty,\eps}$,  the function  $\lambda_\varepsilon = \lim_{m \to \infty}\lambda^m_\varepsilon$ is continuous by construction.  \EEE Therefore, we have 
    \begin{align}\label{2ndadd}
        \lim_{s\to t}\limsup_{m\to\infty} \|\nabla u^m_\varepsilon(t)\mathds{1}_{(\Gamma^m_\varepsilon(t))^c}-\nabla u^m_\eps(s)\mathds{1}_{(\Gamma^m_\varepsilon(t))^c}\|_{L^2(\Omega)}=0.
    \end{align}
 Let $t_k\nearrow t$ be the sequence  in the definition of $u_\varepsilon(t)$ and $\gamma_\varepsilon(t)$. Using the relation $\nabla u^m_\varepsilon(t_k)\mathds{1}_{(\Gamma^m_\varepsilon(t))^c} = (\nabla u^m_\varepsilon(t_k)-\gamma^m_\varepsilon(t_k))$ by  \eqref{eq:identityyni}, we can write 
    \begin{align*}
        \nabla u^m_\varepsilon(t)\mathds{1}_{(\Gamma^m_\varepsilon(t))^c}-(\nabla u_\varepsilon(t)-\gamma_\varepsilon(t))         = \,&\nabla u^m_\varepsilon(t)\mathds{1}_{(\Gamma^m_\varepsilon(t))^c}-\nabla u^m_\varepsilon(t_k)\mathds{1}_{(\Gamma^m_\varepsilon(t))^c}\\
        &+(\nabla u^m_\varepsilon(t_k)-\gamma^m_\varepsilon(t_k))-(\nabla u_\varepsilon(t_k)-\gamma_\varepsilon(t_k))\\
        &+(\nabla u_\varepsilon(t_k)-\gamma_\varepsilon(t_k))-(\nabla u_\varepsilon(t)-\gamma_\varepsilon(t)). 
    \end{align*} 
Letting  first \EEE $m\to \infty$ and afterwards $k \to \infty$, we observe that all three addends on the right-hand side converge to zero in $L^2(\Omega')$ by \eqref{2ndadd}, the convergence of $(u^m_\varepsilon(t_k),\gamma^m_\varepsilon(t_k))$ as $m \to \infty$, and the definition of the pair $(u_\varepsilon(t),\gamma_\varepsilon(t))$. This concludes the proof.
\end{proof}

\printbibliography

\end{document}